\newtheorem{theorem}[equation]{Theorem}
\newtheorem{lemma}[equation]{Lemma}
\newtheorem{corollary}[equation]{Corollary}
\newtheorem{proposition}[equation]{Proposition}
\numberwithin{equation}{section}
\begin{document}

\title[Monomial deformations of generalized Delsarte polynomials]{On monomial deformations of generalized \\ Delsarte polynomials}
\author{Alan Adolphson}
\address{Department of Mathematics\\
Oklahoma State University\\
Stillwater, Oklahoma 74078}
\email{adolphs@math.okstate.edu}
\author{Steven Sperber}
\address{School of Mathematics\\
University of Minnesota\\
Minneapolis, Minnesota 55455}
\email{sperber@math.umn.edu}
\date{\today}
\keywords{}
\subjclass{}
\begin{abstract}
By a generalized Delsarte polynomial we mean a Laurent polynomial whose exponent vectors are linearly independent.
We consider certain monomial deformations of generalized Delsarte polynomials and study their associated differential modules.   
We determine the solutions at the origin, which are all classical ${}_kF_{k-1}$-hypergeometric functions with the variable raised to a power.  Using standard changes of variable, we also obtain the solutions at infinity.
\end{abstract}
\maketitle

\section{Introduction}

Let $A = \{{\bf a}_j\}_{j=1}^m\subseteq{\mathbb Z}^n$ be a linearly independent set in ${\mathbb R}^n$.  We assume there is a point ${\bf a}_0\in{\mathbb Z}^n$ lying in the interior of the convex hull of the set $A$.  This is equivalent to the existence of a relation
\begin{equation}
\ell_0{\bf a}_0 = \sum_{j=1}^m \ell_j{\bf a}_j
\end{equation}
where $\ell_0,\ell_1,\dots,\ell_m$ are positive integers having greatest common divisor equal to one satisfying
\begin{equation}
\ell_0 = \sum_{j=1}^m \ell_j.
\end{equation}
Set $A_+ = A\cup\{{\bf a}_0\}$.  We associate a differential module to the polynomial
\begin{equation}
f_\lambda = \sum_{j=1}^m \ell_jx^{{\bf a}_j} - \ell_0\lambda x^{{\bf a}_0}\in{\mathbb Z}[\lambda][x_1^{\pm 1},\dots, x_n^{\pm 1}],
\end{equation}
where $\lambda$ is an indeterminate.  (In the special case of invertible polynomials, the $\ell_j$, $j=1,\dots,m$, are the dual weights and $\ell_0$ is the dual degree.)

Let $V\subseteq {\mathbb R}^n$ be the subspace generated by $A$, so $\dim_{\mathbb R}V = m$.   Let $V_{\mathbb Z} = V\cap{\mathbb Z}^n$ and let $C(A)\subseteq V$ be the real cone generated by $A$.  Put $M=V_{\mathbb Z}\cap C(A)$ and let $S\subseteq {\mathbb C}[\lambda][x_1^{\pm 1},\dots x_n^{\pm 1}]$ be the ${\mathbb C}[\lambda]$-algebra generated by $\{x^u\mid u\in M\}$.  The differential operators
\[ D_i:= x_i\frac{\partial}{\partial x_i} + x_i\frac{\partial f_\lambda}{\partial x_i}\; \bigg(=\frac{1}{\exp f_\lambda}\circ x_i\frac{\partial}{\partial x_i}\circ \exp f_\lambda\bigg) \]
for $i=1,\dots,n$ operate on $S$ and commute with the operator 
\[ D_\lambda := \frac{\partial}{\partial\lambda} -\ell_0x^{{\bf a}_0}\;\bigg( =\frac{1}{\exp f_\lambda}\circ \frac{\partial}{\partial \lambda}\circ \exp f_\lambda\bigg) , \]
which also operates on $S$.  Let ${\mathcal D}:={\mathbb C}[\lambda]\langle \partial_\lambda\rangle$ be the ring of differential operators with coefficients in ${\mathbb C}[\lambda]$.  Then both $S$ and the ${\mathbb C}[\lambda]$-module ${\mathcal W}:= S/\sum_{i=1}^n D_iS$
are left ${\mathcal D}$-modules, where $\partial_\lambda$ acts on $S$ as $D_\lambda$.  For $\xi\in S$, we also denote by $\xi$ its image in the quotient ${\mathcal W}$.  The context should prevent this abuse of notation from causing confusion.

Let $S'\subseteq {\mathbb C}(\lambda)[x_1^{\pm 1},\dots x_n^{\pm 1}]$ be the ${\mathbb C}(\lambda)$-algebra generated by $\{x^u\mid u\in M\}$ and let ${\mathcal W}' =S'/\sum_{i=1}^n D_iS'$.  Both $S'$ and ${\mathcal W}'$ are left ${\mathcal D}'$-modules, where ${\mathcal D}' = {\mathbb C}(\lambda)\langle\partial_\lambda\rangle$.  For $\xi\in S'$, we also denote by $\xi$ its image in the quotient ${\mathcal W}'$.  

Our main result is an explicit construction of a basis for the finite-dimensional ${\mathbb C}$-vector space ${\rm Hom}_{\mathcal D}({\mathcal W},{\mathbb C}[[\lambda]])$, the formal solutions of~${\mathcal W}$ at the origin, in terms of classical ${}_kF_{k-1}$-hypergeometric series for various $k$ (Corollary~3.18 and Theorem~4.7).  We apply this result to two related questions.  First of all, the quotient ${\mathcal W}'$ is a finite-dimensional ${\mathbb C}(\lambda)$-vector space, so for any basis there is an associated Picard-Fuchs equation.  We find the fundamental solution matrix for one such basis, again in terms of classical ${}_kF_{k-1}$-hypergeometric series.  As a second application, we compute the hypergeometric equations satisfied by elements $x^{\bf b}$ of this basis for~${\mathcal W}'$.  Using standard changes of variables, we also explain how to find the solutions of ${\mathcal W}'$ at infinity, which may involve $\log \lambda$.  

Special cases of ${\mathcal W}$ and ${\mathcal W}'$ have been studied by various authors.  The relation between these spaces and de Rham cohomology has been known since Dwork \cite{D} and Katz \cite{K} (see also Batyrev \cite{B} and 
Adolphson-Sperber~\cite{AS}).
The problem of expressing solutions of the Picard-Fuchs equation for certain hypersurfaces in terms of ${}_kF_{k-1}$-hypergeometric functions or for determining the differential operator satisfied by a differential form has also been studied by Kloosterman \cite{Kl}, G\"ahrs~\cite{G}, Miyatani \cite{M}, Doran et al.\ \cite{Do}, and Negishi \cite{N}.  We give an example of the application of our results to such a computation in Section~11.

In the case of certain monomial deformations of diagonal hypersurfaces, Katz~\cite{K2} makes a normalization of the equation $f_\lambda = 0$ that puts the singular points at roots of unity and leads to an expression of cohomology in terms of hypergeometric sheaves.  Our normalization~(1.1), a special case of \cite[Equation~(1.1)]{AS2}, generalizes Katz's normalization.  This observation motivated us to combine Dwork's original method \cite{D} with some ideas from \cite{AS2} and \cite[Section~3]{AS1} to calculate the solutions.

We describe the type of hypergeometric equations that arise here.  Let $\delta_x = x\frac{d}{dx}$ and consider the differential operator
\begin{equation}
 (\delta_x+\beta_1-1)\cdots(\delta_x + \beta_k-1) - x(\delta_x+\alpha_1)\cdots(\delta_x+\alpha_k).
\end{equation}
Suppose that the $\beta_j$ are distinct elements in $(0,1]$ with $\beta_1 = 1$.  One solution of the operator (1.4) at the origin is then
\begin{equation}
 {}_kF_{k-1}\bigg(\begin{matrix} \alpha_1,\dots,\alpha_k \\ \beta_2,\dots,\beta_k\end{matrix};x\bigg) := \sum_{s=0}^\infty \frac{(\alpha_1)_s\cdots(\alpha_k)_s}{(\beta_2)_s\cdots(\beta_k)_s(1)_s}x^s. 
 \end{equation}
The other $k-1$ solutions are obtained by multiplying the series on the right-hand side by $x^{1-\beta_j}$, $j=2,\dots,k$, and adding $1-\beta_j$ to the quantity inside each Pochhammer symbol:
\begin{equation}
x^{1-\beta_j}{}_kF_{k-1}\bigg(\begin{matrix} \alpha_1+1-\beta_j,\dots,\alpha_k+1-\beta_j \\ \beta_1+1-\beta_j,.\hat{.}.,\beta_k+1-\beta_j\end{matrix};x\bigg),
\end{equation}
where the symbol \^{} indicates that the term $\beta_j+1-\beta_j=1$ has been omitted (as is usual in hypergeometric notation).  In particular, one can read off the differential operator (1.4) and the remaining solutions (1.6) from the single solution (1.5).  The hypergeometric equations we encounter will be of this type with the variable $x$ replaced by $\lambda^{\ell_0}$.  

We illustrate how the series (1.5) appear in our work.
Let $P(A)\subseteq V$ be the parallelopiped defined by $A$:
\[ P(A) = \bigg\{ \sum_{j=1}^m c_j{\bf a}_j\mid \text{$0\leq c_j<1$ for $j=1,\dots,m$}\bigg\}. \]
Let ${\mathbb Z}A\subseteq V_{\mathbb Z}$ be the abelian group generated by $A$.  The set
${\mathcal B}:= V_{\mathbb Z}\cap P(A)$ is a complete set of coset reprsentatives for ${\mathbb Z}A$ in $V_{\mathbb Z}$.
Put $d=\lvert {\mathcal B}\rvert = [V_{\mathbb Z}:{\mathbb Z}A]$.  
Let ${\mathbb Z}A_+\subseteq V_{\mathbb Z}$ be the subgroup generated by $A_+$.  Equation (1.2) implies that $[{\mathbb Z}A_+:{\mathbb Z}A] = \ell_0$.  

We show (Proposition 5.34) that the set $X^{\mathcal B}:= \{x^{\bf b}\mid {\bf b}\in{\mathcal B}\}$ is a basis for~${\mathcal W}'$.  In Section~6, we construct a fundamental solution matrix for the Picard-Fuchs equation associated to this basis.  Its rows and columns are indexed by the set ${\mathcal B}$, and series of the type (1.5) with $x$ replaced by $\lambda^{\ell_0}$ appear as the diagonal entries. Constant multiples of the series (1.6) with $x$ replaced by $\lambda^{\ell_0}$ are the remaining nonzero entries in that row.

These diagonal entries are easy to describe.  Let ${\bf b}\in{\mathcal B}$ and write ${\bf b} = \sum_{j=1}^m v_j{\bf a}_j$ with $v_j\in[0,1)\cap{\mathbb Q}$ for $j=1,\dots,m$.  The $({\bf b},{\bf b})$-entry in our solution matrix is
\begin{equation}
\sum_{s=0}^\infty \frac{\displaystyle \prod_{j=1}^m \bigg(\frac{v_j}{\ell_j}\bigg)_s\bigg(\frac{v_j+1}{\ell_j}\bigg)_s\cdots\bigg(\frac{v_j+\ell_j-1}{\ell_j}\bigg)_s}{\displaystyle \bigg(\frac{1}{\ell_0}\bigg)_s\bigg(\frac{2}{\ell_0}\bigg)_s\cdots\bigg(\frac{\ell_0}{\ell_0}\bigg)_s}\lambda^{s\ell_0}. 
\end{equation}

In general there is cancellation in this quotient.  Let ${\mathcal B}_1\subseteq{\mathcal B}$ be the subset consisting of those points that are congruent to ${\bf b}$ modulo ${\mathbb Z}A_+$, a set of cardinality~$\ell_0$, and let $R$ be the number of elements of ${\mathcal B}_1$ that are interior to $P(A)$.  If ${\bf b}$ is an interior point of $P(A)$, then (1.7) will be an ${}_RF_{R-1}$-hypergeometric function.  If ${\bf b}$ is a boundary point of $P(A)$, then (1.7) will be an ${}_{R+1}F_R$-hypergeometric function. (If ${\bf b}$ is a boundary point, then $v_j=0$ for some $j$, so (1.7) equals the constant function~1.)  This follows from the fact (Proposition~6.14) that in either case, if ${\bf b}'\in{\mathcal B}_1$ is a boundary point of $P(A)$ and ${\bf b} + t {\bf a}_0\equiv {\bf b}' \pmod{{\mathbb Z}A}$ with $t\in\{1,2,\dots,\ell_0-1\}$, then the factor $(1-\frac{t}{\ell_0})_s$ cancels in (1.7).  Using (1.6), one can then read off the other solutions of the corresponding Equation~(1.4).  Constant multiples of these solutions with the variable $x$ replaced by $\lambda^{\ell_0}$ are the nonzero entries in the row corresponding to ${\bf b}$.  

Proposition~6.14 describes the cancellation algorithm.  Theorems~6.17 and~6.18 describe the entries in row ${\bf b}$.  In Section~8 we show that the element $x^{\bf b}\in{\mathcal W}'$ satisfies the differential equation whose solutions are the nonzero entries in row~${\bf b}$.

{\bf Example.}  Take $f_\lambda = 3x_1^2 + 2x_2^3 +x_3^6 - 6\lambda x_1x_2x_3$.   We have ${\bf a}_1 = (2,0,0)$, ${\bf a}_2 = (0,3,0)$, ${\bf a}_3 = (0,0,6)$, and ${\bf a}_0 = (1,1,1)$, giving the relation
\[ 6{\bf a}_0 = 3{\bf a}_1 + 2{\bf a}_2 + {\bf a}_3. \]
Thus $\ell_1 = 3$, $\ell_2 = 2$, $\ell_3 = 1$, $\ell_0 = 6$, and $\lvert{\mathcal B}\rvert = 36$.  Let ${\mathcal B}_1$ be the subset of ${\mathcal B}$ consisting of elements congruent to $(0,0,0)$ modulo ${\mathbb Z}A_+$:
\[ {\mathcal B}_1 = \{ (0,0,0), (1,1,1), (0,2,2), (1,0,3), (0,1,4), (1,2,5) \}. \]
In this case $R=2$, so the row corresponding to ${\bf b} = (0,0,0)$ has three nonzero entries.  We have $v_j=0$ for $j=1,2,3$, so (1.7) becomes
\[ \sum_{s=0}^\infty \frac{(\frac{0}{3})_s(\frac{1}{3})_s(\frac{2}{3})_s(\frac{0}{2})_s(\frac{1}{2})_s(\frac{0}{6})_s}{(\frac{1}{6})_s(\frac{2}{6})_s(\frac{3}{6})_s(\frac{4}{6})_s(\frac{5}{6})_s(\frac{6}{6})_s}\lambda^{6s} =
{}_3F_2(0,0,0;1/6,5/6;\lambda^6) = 1. \]
The cancellation in this quotient is determined by the boundary points (other than $(0,0,0)$) in ${\mathcal B}_1$.  We have
\[ (0,0,0) + \begin{cases} 2(1,1,1) \\ 3(1,1,1) \\ 4(1,1,1) \end{cases} \equiv \begin{cases} (0,2,2) \\ (1,0,3) \\ (0,1,4) \end{cases} \pmod{{\mathbb Z}A}. \]
By Proposition 6.14(b), the terms that cancel are $(1-\frac{t}{6})_s$ for $t=2,3,4$. 

The other two nonzero entries in this row are given, up to a constant multiple, by (1.6) with $x$ replaced by $\lambda^6$:
\[ \lambda\, {}_3F_2(1/6,1/6,1/6;1/3,7/6;\lambda^6)\quad\text{and}\quad \lambda^5{}_3F_2(5/6,5/6,5/6;5/3,11/6;\lambda^6).  \]
These entries appear in columns $(1,1,1)$ and $(1,2,5)$, respectively.  

Now consider the row corresponding to ${\bf b} = (1,1,1)$.  
We have $v_1 = 1/2$, $v_2 = 1/3$, and $v_3 = 1/6$, so by (1.7) the diagonal entry is
\[ \sum_{s=0}^\infty \frac{(\frac{1}{6})_s(\frac{3}{6})_s(\frac{5}{6})_s(\frac{1}{6})_s(\frac{4}{6})_s(\frac{1}{6})_s}{(\frac{1}{6})_s(\frac{2}{6})_s(\frac{3}{6})_s(\frac{4}{6})_s(\frac{5}{6})_s(\frac{6}{6})_s}\lambda^{6s} =
{}_2F_1(1/6,1/6;1/3;\lambda^6). \]
For the boundary points of ${\mathcal B}_1$ we have
\[ (1,1,1) + \begin{cases} 1(1,1,1) \\ 2(1,1,1) \\ 3(1,1,1) \\ 5(1,1,1) \end{cases} \equiv \begin{cases} (0,2,2) \\ (1,0,3) \\ (0,1,4) \\ (0,0,0) \end{cases}\pmod{{\mathbb Z}A}. \]
By Proposition 6.14(a), the terms that cancel are $(1-\frac{t}{6})_s$ for $t=1,2,3,5$.

From (1.6), the entry in the column corresponding to $(1,2,5)$ is, up to a constant multiple, $\lambda^4 {}_2F_1(5/6,5/6;5/3;\lambda^6)$.  

\section{Solutions at the origin}

Let $S_0\subseteq{\mathbb C}[x_1^{\pm 1},\dots,x_n^{\pm 1}]$ be the ${\mathbb C}$-algebra generated by $\{x^u\mid u\in M\}$.  Let $f_0$ be the Laurent polynomial obtained from $f_\lambda$ by setting $\lambda=0$:
\[ f_0 = \sum_{j=1}^m \ell_j x^{{\bf a}_j}\in S_0. \]
Put for $i=1,\dots,n$
\[ D_{i,0} = x_i\frac{\partial}{\partial x_i} + x_i\frac{\partial f_0}{\partial x_i}, \]
operators on $S_0$.  
Finally, let ${\mathcal W}_0 = S_0/\sum_{i=1}^n D_{i,0}S_0$.  
\begin{theorem}
There is an isomorphism of ${\mathbb C}$-vector spaces
\begin{equation}
{\rm Hom}_{\mathcal D}({\mathcal W},{\mathbb C}[[\lambda]])\cong {\rm Hom}_{\mathbb C}({\mathcal W}_0,{\mathbb C}).
\end{equation}
\end{theorem}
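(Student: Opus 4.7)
My approach is to identify ${\mathcal W}_0$ with the reduction ${\mathcal W}/\lambda{\mathcal W}$, and then pass between ${\mathcal D}$-module maps to ${\mathbb C}[[\lambda]]$ and linear functionals on the quotient via a Taylor expansion driven by $D_\lambda$. This is a variant of the standard principle that a flat connection on the formal disk is determined by its fiber at the origin.

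The first step is a structural identification. Since $S = S_0\otimes_{\mathbb C}{\mathbb C}[\lambda]$ and
\[ D_i - D_{i,0} = -\ell_0\lambda\, x_i\frac{\partial x^{{\bf a}_0}}{\partial x_i}, \]
reduction modulo $\lambda$ carries $\sum_i D_iS$ onto $\sum_i D_{i,0}S_0$, yielding a natural isomorphism ${\mathcal W}/\lambda{\mathcal W}\cong{\mathcal W}_0$. Moreover, because $D_\lambda$ commutes with each $D_i$, it induces an endomorphism of ${\mathcal W}$, so iterated powers $D_\lambda^k\xi$ make sense in ${\mathcal W}$.

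Given this, I would define $\Phi\colon {\rm Hom}_{\mathcal D}({\mathcal W},{\mathbb C}[[\lambda]])\to{\rm Hom}_{\mathbb C}({\mathcal W}_0,{\mathbb C})$ simply as reduction modulo~$\lambda$: any $\phi$ is in particular ${\mathbb C}[\lambda]$-linear, so descends to a map ${\mathcal W}/\lambda{\mathcal W}\to{\mathbb C}[[\lambda]]/\lambda{\mathbb C}[[\lambda]]={\mathbb C}$. For the inverse $\Psi$, writing $\pi\colon{\mathcal W}\twoheadrightarrow{\mathcal W}/\lambda{\mathcal W}={\mathcal W}_0$, I would set
\[ \Psi(\bar\phi)(\xi)=\sum_{k=0}^\infty \frac{\lambda^k}{k!}\,\bar\phi\bigl(\pi(D_\lambda^k\xi)\bigr)\in{\mathbb C}[[\lambda]]. \]
Commutation with $\partial_\lambda$ is built in by an index shift. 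For ${\mathbb C}[\lambda]$-linearity, I would use the identity $D_\lambda^k(\lambda\xi)=kD_\lambda^{k-1}\xi+\lambda D_\lambda^k\xi$, proved by induction from $D_\lambda(\lambda\eta)=\eta+\lambda D_\lambda\eta$; since $\bar\phi\circ\pi$ annihilates $\lambda{\mathcal W}$, only the first term contributes, and the sum reindexes to $\lambda\,\Psi(\bar\phi)(\xi)$.

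Finally, $\Phi\circ\Psi=\mathrm{id}$ is immediate from reading off the constant term of $\Psi(\bar\phi)(\xi)$, while $\Psi\circ\Phi=\mathrm{id}$ is exactly Taylor's formula applied to the power series $\phi(\xi)\in{\mathbb C}[[\lambda]]$, after using ${\mathcal D}$-linearity to rewrite $\partial_\lambda^k\phi(\xi)=\phi(D_\lambda^k\xi)$. The only genuinely delicate step is verifying ${\mathbb C}[\lambda]$-linearity of $\Psi(\bar\phi)$, since this is where the twisted interplay between $\lambda$ and $D_\lambda$ must cancel; the remainder is formal bookkeeping on the ${\mathcal D}$-module structure.
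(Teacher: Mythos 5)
Your proof is correct, and it takes a genuinely different route from the paper's. The paper first passes to a dual model: it identifies ${\rm Hom}_{{\mathbb C}[\lambda]}(S,{\mathcal F})$ with a module $R({\mathcal F})$ of ``reverse'' Laurent series in the $x$-variables with coefficients in ${\mathcal F}$, cuts out the sub-objects ${\mathcal K}({\mathcal F})$ and ${\mathcal K}_0(Y)$ corresponding to the two ${\rm Hom}$'s, and then realizes the isomorphism concretely as $\xi\mapsto\gamma_-\bigl(\exp(-\ell_0\lambda x^{{\bf a}_0})\,\xi\bigr)$, following Dwork. You instead work directly at the level of ${\mathcal D}$-module maps, using the identification ${\mathcal W}/\lambda{\mathcal W}\cong{\mathcal W}_0$ and the Taylor-type formula $\Psi(\bar\phi)(\xi)=\sum_{k\geq 0}\frac{\lambda^k}{k!}\bar\phi(\pi(D_\lambda^k\xi))$. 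The two constructions agree: applying your $\Psi(\bar\phi)$ to a monomial $x^v$ and writing $\bar\phi$ in terms of $c_u=\bar\phi(x^u)$ gives exactly $\sum_k\frac{(-\ell_0\lambda)^k}{k!}c_{v+k{\bf a}_0}$, which is the coefficient series $A_v(\lambda)$ in the paper's $\phi(\xi)$. Your ${\mathbb C}[\lambda]$-linearity check via $D_\lambda^k(\lambda\xi)=kD_\lambda^{k-1}\xi+\lambda D_\lambda^k\xi$ and $\pi(\lambda{\mathcal W})=0$ is the step the paper handles implicitly through the $\gamma_-$-truncation, and you carry it out correctly; the remaining verifications ($\Phi\circ\Psi=\mathrm{id}$ by reading off the constant term, $\Psi\circ\Phi=\mathrm{id}$ by formal Taylor expansion of $\phi(\xi)\in{\mathbb C}[[\lambda]]$) are the mirror of the paper's argument that elements of ${\mathcal K}({\mathbb C}[[\lambda]])$ are determined by their constant terms. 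The trade-off: your argument is shorter, more conceptual, and in fact establishes the general statement ${\rm Hom}_{\mathcal D}(M,{\mathbb C}[[\lambda]])\cong{\rm Hom}_{\mathbb C}(M/\lambda M,{\mathbb C})$ for any left ${\mathcal D}$-module $M$; the paper's route is deliberately less economical because the Laurent-series model $R({\mathcal F})$, $\gamma_-$, and the explicit $\phi$ are exactly the machinery needed in Sections~3--4 to produce the dual basis $g_{\bf b}$ and the hypergeometric formulas for $\xi_{\bf b}$.
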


The isomorphism (2.2) is a special case of well-known, more general results (see, for example, \cite[Section 4]{A1}).  We follow here an idea of Dwork \cite[Section~5(c)]{D} to make this isomorphism explicit, so that by computing a basis of the simpler right-hand side we obtain explicit formulas for a basis of the left-hand side.  We begin with some general observations. 

Let ${\mathcal F}$ be a left ${\mathcal D}$-module and consider the ${\mathbb C}[\lambda]$-module
\[ R({\mathcal F}) = \bigg\{\sum_{u\in M} A_ux^{-u}\mid A_u\in{\mathcal F}\bigg\}. \]
Consider the pairing $R({\mathcal F})\times S\to{\mathcal F}$ defined by
\[ \bigg\langle \sum_{u\in M} A_ux^{-u},\sum_{u\in M} B_ux^u\bigg\rangle = \sum_{u\in M} B_uA_u, \]
where the $B_u$ lie in ${\mathbb C}[\lambda]$ and the sum on the right-hand side is finite because the second sum on the left-hand side is.  This pairing defines an isomorphism
\begin{equation}
{\rm Hom}_{{\mathbb C}[\lambda]}(S,{\mathcal F})\cong R({\mathcal F}).
\end{equation}

We next determine which elements of $R({\mathcal F})$ correspond to elements of ${\rm Hom}_{\mathcal D}(S,{\mathcal F})$.  
The condition to be satisfied is that for all $u\in M$ we have
\begin{align*}
 \bigg\langle \sum_{v\in M} A_vx^{-v},D_\lambda(x^u)\bigg\rangle &= \partial_\lambda\bigg\langle \sum_{v\in M} A_vx^{-v},x^u\bigg\rangle\\
  &= \partial_\lambda(A_u).
  \end{align*}
But $D_\lambda(x^u) = -\ell_0x^{u+{\bf a}_0}$, so the left-hand side is just $-\ell_0A_{u+{\bf a}_0}$.  Thus if we put
\[ R^*({\mathcal F}) = \bigg\{ \sum_{u\in M} A_ux^{-u}\mid \text{$A_u\in{\mathcal F}$ and $\partial_\lambda(A_u) = -\ell_0A_{u+{\bf a}_0}$ for all $u\in M$}\bigg\}, \]
then
\begin{equation}
{\rm Hom}_{\mathcal D}(S,{\mathcal F})\cong R^*({\mathcal F}).
\end{equation}

We want to compute ${\rm Hom}_{\mathcal D}({\mathcal W},{\mathcal F})$, which is the intersection of ${\rm Hom}_{\mathcal D}(S,{\mathcal F})$ and ${\rm Hom}_{{\mathbb C}[\lambda]}({\mathcal W},{\mathcal F})(\subseteq {\rm Hom}_{{\mathbb C}[\lambda]}(S,{\mathcal F}))$.  We compute ${\rm Hom}_{{\mathbb C}[\lambda]}({\mathcal W},{\mathcal F})$.  By (2.3), these are the elements of $R({\mathcal F})$ that vanish on $\sum_{i=1}^n D_iS$.  Let
\[ \xi = \sum_{u\in M} A_ux^{-u}\in R({\mathcal F}). \]
Then $\xi$ vanishes on $\sum_{i=1}^n D_iS$ if and only if it vanishes on $D_i(x^v)$ for all $i=1,\dots,n$ and all $v\in M$.  
  So the condition to be satisfied is (where ${\bf a}_j = (a_{1j},\dots,a_{nj})$)
 \begin{align}
 \langle \xi,D_i(x^v)\rangle &= \bigg\langle \xi,v_ix^v -\ell_0a_{i0}\lambda x^{v+{\bf a}_0} + \sum_{j=1}^m \ell_ja_{ij}x^{v+{\bf a}_j}\bigg\rangle \\
  &= v_iA_v -\ell_0a_{i0}\lambda A_{v+{\bf a}_0} + \sum_{j=1}^m \ell_ja_{ij}A_{v+{\bf a}_j} \nonumber \\
   &= 0.\nonumber
   \end{align}
   
   Let $\gamma_-$ be defined by ${\mathbb C}[\lambda]$-linearity and the condition
\[ \gamma_-(x^{-u}) = \begin{cases}  x^{-u} & \text{if $u\in M$,} \\ 0 & \text{if $u\not\in M$.} \end{cases} \]
Let $D_i^*$ be the operator on $R({\mathcal F})$ defined for $i=1,\dots,n$ by
\[ D_i^* = \gamma_-\circ \bigg(-x_i\frac{\partial}{\partial x_i} - \ell_0a_{i0}\lambda x^{{\bf a}_0} +\sum_{j=1}^m \ell_ja_{ij}x^{{\bf a}_j}\bigg). \]
 For $v\in M$ the coefficient of $x^{-v}$ in $D_i^*(\xi)$ is
 \[ v_iA_v -\ell_0a_{i0}\lambda A_{v+{\bf a}_0} + \sum_{j=1}^m \ell_ja_{ij}A_{v+{\bf a}_j}. \]
 It now follows from (2.5) that 
 \[ {\rm Hom}_{{\mathbb C}[\lambda]}({\mathcal W},{\mathcal F}) = \{ \xi \in R({\mathcal F})\mid \text{$D_i^*(\xi) = 0$ for $i=1,\dots,n$}\}. \]
   
 Define
 \begin{multline*} {\mathcal K}({\mathcal F}) = \bigg\{ \xi = \sum_{u\in M} A_ux^{-u}\mid \text{$A_u\in{\mathcal F}$ for $u\in M$, $D_i^*(\xi) = 0$ for $i=1,\dots,n$,} \\
 \text{ and $\partial_\lambda(A_u) =-\ell_0A_{u+{\bf a}_0}$ for all $u\in M$}\bigg\}. 
 \end{multline*}
We have proved the following result.
\begin{proposition}
${\mathcal K}({\mathcal F})\cong {\rm Hom}_{\mathcal D}({\mathcal W},{\mathcal F})$.
\end{proposition}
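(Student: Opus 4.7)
The plan is to assemble this directly from the two computations already carried out in the section. A $\mathcal{D}$-linear map $\mathcal{W}\to\mathcal{F}$ is the same thing as a $\mathcal{D}$-linear map $S\to\mathcal{F}$ that vanishes on the submodule $\sum_{i=1}^n D_iS$, and this is in turn the same thing as a $\mathbb{C}[\lambda]$-linear map $S\to\mathcal{F}$ satisfying both the $D_\lambda$-compatibility condition and the vanishing condition. In other words,
\[
{\rm Hom}_{\mathcal D}({\mathcal W},{\mathcal F}) = {\rm Hom}_{\mathcal D}(S,{\mathcal F})\cap {\rm Hom}_{{\mathbb C}[\lambda]}({\mathcal W},{\mathcal F})
\]
viewed as subsets of ${\rm Hom}_{{\mathbb C}[\lambda]}(S,{\mathcal F})$.

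Having set this up, I would transport the intersection through the isomorphism (2.3). By (2.4), elements of ${\rm Hom}_{\mathcal D}(S,{\mathcal F})$ correspond to those $\xi=\sum A_ux^{-u}\in R({\mathcal F})$ satisfying $\partial_\lambda(A_u)=-\ell_0A_{u+{\bf a}_0}$ for all $u\in M$; by the calculation immediately preceding the statement, elements of ${\rm Hom}_{{\mathbb C}[\lambda]}({\mathcal W},{\mathcal F})$ correspond to those $\xi\in R({\mathcal F})$ with $D_i^*(\xi)=0$ for $i=1,\dots,n$. The intersection of the two subsets of $R({\mathcal F})$ is, by definition, exactly ${\mathcal K}({\mathcal F})$, yielding the asserted isomorphism.

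The only substantive point to check — and the one place where one must actually use the structure of the problem rather than simply take an intersection — is that the inclusion ${\rm Hom}_{{\mathbb C}[\lambda]}({\mathcal W},{\mathcal F})\hookrightarrow {\rm Hom}_{{\mathbb C}[\lambda]}(S,{\mathcal F})$ is compatible with the $\mathcal{D}$-structure, i.e.\ that $D_\lambda$ preserves $\sum_{i=1}^n D_iS\subseteq S$, so that the conditions ``$D_\lambda$-equivariant'' and ``vanishes on $\sum_i D_iS$'' can be imposed simultaneously and independently. This follows from the commutation relation $[D_\lambda,D_i]=0$ already noted in the introduction, since then $D_\lambda(D_i\eta)=D_i(D_\lambda\eta)\in D_iS$ for any $\eta\in S$. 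Once this compatibility is in place, the proposition is a formal consequence of (2.4) and the identification of ${\rm Hom}_{{\mathbb C}[\lambda]}({\mathcal W},{\mathcal F})$ derived from~(2.5), with no further calculation required.
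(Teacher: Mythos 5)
Your proof is correct and takes essentially the same route as the paper, which sets up the intersection ${\rm Hom}_{\mathcal D}({\mathcal W},{\mathcal F}) = {\rm Hom}_{\mathcal D}(S,{\mathcal F}) \cap {\rm Hom}_{{\mathbb C}[\lambda]}({\mathcal W},{\mathcal F})$ inside ${\rm Hom}_{{\mathbb C}[\lambda]}(S,{\mathcal F})$, transports it through (2.3), and reads off the two defining conditions of ${\mathcal K}({\mathcal F})$ from (2.4) and (2.5). Your extra remark that $D_\lambda$ preserves $\sum_i D_iS$ via $[D_\lambda, D_i]=0$ is a sound observation, though the paper has already used it implicitly when declaring ${\mathcal W}$ to be a ${\mathcal D}$-module in the setup.
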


We analyze similarly ${\rm Hom}_{\mathbb C}({\mathcal W}_0,Y)$ for an arbitrary ${\mathbb C}$-vector space $Y$.  Let
\[ R_0(Y) = \bigg\{ \sum_{u\in M} A_ux^{-u}\mid A_u\in Y\bigg\}. \]
The pairing $R_0(Y)\times S_0\to Y$ defined by 
 \[ \bigg\langle \sum_{u\in M} A_ux^{-u},\sum_{u\in M} B_ux^u\bigg\rangle = \sum_{u\in M} B_uA_u, \]
where the $B_u$ lie in ${\mathbb C}$, defines an isomorphism
\begin{equation}
{\rm Hom}_{\mathbb C}(S_0,Y)\cong R_0(Y).
\end{equation}

Let $\xi = \sum_{u\in M} A_ux^{-u}\in R_0(Y)$.  Then $\xi$ vanishes on $\sum_{i=1}^n D_{i,0}S_0$ if and only if it vanishes on $D_{i,0}(x^v)$ for all $i=1,\dots,n$ and all $v\in M$.  The condition to be satisfied is
\begin{align} 
\langle \xi,D_{i,0}(x^v)\rangle &= \bigg\langle \xi,v_ix^v + \sum_{j=1}^m \ell_ja_{ij}x^{v+{\bf a}_j}\bigg\rangle\\
 &= v_i A_v + \sum_{j=1}^m \ell_ja_{ij}A_{v+{\bf a}_j} \nonumber \\
  &= 0. \nonumber
  \end{align}

Let $D_{i,0}^*$ be the operator on $R_0(Y)$ defined for $i=1,\dots,n$ by
\[ D_{i,0}^* = \gamma_-\circ\bigg(-x_i\frac{\partial}{\partial x_i} + \sum_{j=1}^m \ell_ja_{ij}x^{{\bf a}_j}\bigg). \]
For $v\in M$, the coefficient of $x^{-v}$ in $D_{i,0}^*(\xi)$ is
\[ v_iA_v + \sum_{j=1}^m \ell_ja_{ij}A_{v+{\bf a}_j}. \]
If we put 
\[ {\mathcal K}_0(Y) = \bigg\{ \xi = \sum_{u\in M} A_ux^{-u}\mid \text{$A_u\in Y$ for $u\in M$ and $D_{i,0}^*(\xi)=0$ for $i=1,\dots,n$}\bigg\}, \]
then from Equation (2.8) we get the following result.
\begin{proposition}
${\mathcal K}_0(Y)\cong {\rm Hom}_{\mathbb C}({\mathcal W}_0,Y)$.
\end{proposition}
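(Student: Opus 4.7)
The plan is to mirror the argument that led to Proposition~2.6, but in the simpler $\mathbb{C}$-linear setting (no $\partial_\lambda$ compatibility to worry about). The proof is really a matter of tracking the isomorphism (2.7) through the quotient defining $\mathcal{W}_0$, so the main work is bookkeeping rather than genuine obstruction.

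First I would record that the pairing $R_0(Y)\times S_0\to Y$ is non-degenerate in the appropriate sense and hence induces the isomorphism (2.7) of $\mathbb{C}$-vector spaces $\mathrm{Hom}_{\mathbb{C}}(S_0,Y)\cong R_0(Y)$. Under this identification, a homomorphism $\varphi\colon S_0\to Y$ factors through $\mathcal{W}_0=S_0/\sum_{i=1}^n D_{i,0}S_0$ if and only if the corresponding $\xi=\sum_{u\in M}A_ux^{-u}\in R_0(Y)$ vanishes on the submodule $\sum_{i=1}^n D_{i,0}S_0$. By $\mathbb{C}$-linearity, this is equivalent to $\langle\xi,D_{i,0}(x^v)\rangle=0$ for every $i\in\{1,\dots,n\}$ and every $v\in M$.

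Next I would expand $\langle\xi,D_{i,0}(x^v)\rangle$ exactly as in equation~(2.8): using $D_{i,0}(x^v)=v_ix^v+\sum_{j=1}^m\ell_ja_{ij}x^{v+{\bf a}_j}$, the pairing yields
\[
\langle\xi,D_{i,0}(x^v)\rangle = v_iA_v+\sum_{j=1}^m\ell_ja_{ij}A_{v+{\bf a}_j}.
\]
The essential observation, already made in the paragraph preceding the statement, is that this expression is precisely the coefficient of $x^{-v}$ in $D_{i,0}^*(\xi)$ when $v\in M$; the operator $\gamma_-$ in the definition of $D_{i,0}^*$ ensures that only coefficients indexed by $v\in M$ are retained. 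Therefore the vanishing of all pairings $\langle\xi,D_{i,0}(x^v)\rangle$ for $v\in M$ is equivalent to $D_{i,0}^*(\xi)=0$ in $R_0(Y)$ for every $i$.

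Combining these equivalences, the isomorphism (2.7) restricts to an isomorphism between $\mathrm{Hom}_{\mathbb{C}}(\mathcal{W}_0,Y)$ and the subset of $R_0(Y)$ cut out by the equations $D_{i,0}^*(\xi)=0$ for $i=1,\dots,n$, which is the definition of $\mathcal{K}_0(Y)$. This yields the desired isomorphism. There is no real obstacle here; the only point that needs care is checking that $\gamma_-$ picks out exactly the $v\in M$ contributions, so that the computation of the coefficient of $x^{-v}$ in $D_{i,0}^*(\xi)$ matches the pairing $\langle\xi,D_{i,0}(x^v)\rangle$ on the nose, with no extra unwanted terms coming from indices $v\notin M$.
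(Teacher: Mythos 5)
Your proof is correct and follows the same route as the paper: identify $\mathrm{Hom}_{\mathbb C}(S_0,Y)$ with $R_0(Y)$ via the pairing (2.7), note that vanishing on $\sum_i D_{i,0}S_0$ is the same as vanishing on each $D_{i,0}(x^v)$, expand the pairing as in (2.8), and recognize the resulting expression as the coefficient of $x^{-v}$ in $D_{i,0}^*(\xi)$. The paper carries out exactly this computation in the paragraphs preceding the proposition and concludes directly from (2.8), so there is no substantive difference.
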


\begin{proof}[Proof of Theorem $2.1$]
By Propositions 2.6 and 2.9 we need to prove that
\begin{equation}
{\mathcal K}({\mathbb C}[[\lambda]])\cong {\mathcal K}_0({\mathbb C}).
\end{equation}
For $\xi = \sum_{u\in M} c_ux^{-u}\in{\mathcal K}_0({\mathbb C})$, define
\begin{equation}
 \phi(\xi) = \gamma_-(\exp(-\ell_0\lambda x^{{\bf a}_0})\xi) =:\sum_{u\in M} A_u(\lambda)x^{-u}, 
 \end{equation}
where the $A_u(\lambda)$ lie in ${\mathbb C}[[\lambda]]$.  
We claim that $\phi$ is an isomorphism from ${\mathcal K}_0({\mathbb C})$ onto ${\mathcal K}({\mathbb C}[[\lambda]])$.  

We first show that $\phi(\xi)\in{\mathcal K}({\mathbb C}[[\lambda]])$.  We have
\[ \partial_\lambda(\phi(\xi)) = \sum_{u\in M}(\partial_\lambda A_u)(\lambda) x^{-u}. \]
Since $\partial_\lambda$ commutes with $\gamma_-$ we also have
\begin{align*}
\partial_\lambda(\phi(\xi)) &= \gamma_-(-\ell_0x^{{\bf a}_0}\exp(-\ell_0\lambda x^{{\bf a}_0})\xi)\\
 &= \gamma_-(-\ell_0x^{{\bf a}_0}\phi(\xi))\\
  &= -\ell_0\sum_{u\in M} A_{u+{\bf a}_0}(\lambda)x^{-u}.
 \end{align*}
 It follows that $(\partial_\lambda A_u)(\lambda) = -\ell_0A_{u+{\bf a}_0}(\lambda)$ for all $u\in M$.  
 
 We also need to check that $D_i^*(\phi(\xi)) = 0$ for $i=1,\dots,n$.  First note that
 \[ x_i\frac{\partial}{\partial x_i}\bigg(\exp(-\ell_0\lambda x^{{\bf a}_0})\xi\bigg) = \exp(-\ell_0\lambda x^{{\bf a}_0})\bigg(x_i\frac{\partial\xi}{\partial x_i}-\ell_0\lambda a_{i0}x^{{\bf a}_0}\bigg). \]
We thus have
\begin{multline*}
 D_i^*(\phi(\xi)) = \\ 
 \gamma_-\bigg(\exp(-\ell_0\lambda x^{{\bf a}_0}) \bigg(\bigg(-x_i\frac{\partial\xi}{\partial x_i}+\ell_0\lambda a_{i0}x^{{\bf a}_0}\bigg)-\ell_0a_{i0}\lambda x^{{\bf a}_0}\xi + \sum_{j=1}^m \ell_ja_{ij}x^{{\bf a}_j}\xi\bigg)\bigg). 
 \end{multline*}
Hence
\begin{align*}
 D_i^*(\phi(\xi)) &=\gamma_-\bigg(\exp(-\ell_0\lambda x^{{\bf a}_0})\bigg(-x_i\frac{\partial\xi}{\partial x_i}+ \sum_{j=1}^m \ell_ja_{ij}x^{{\bf a}_j}\xi\bigg)\bigg)\\
  &= \gamma_-\bigg(\exp(-\ell_0\lambda x^{{\bf a}_0})D_{i,0}^*(\xi)\bigg) \\
   &= 0
   \end{align*}
since $\xi\in{\mathcal K}_0({\mathbb C})$.

For $\sum_{u\in M} A_u(\lambda)x^{-u}\in{\mathcal K}({\mathbb C}[[\lambda]])$ define
\[ \phi'\bigg(\sum_{u\in M} A_u(\lambda)x^{-u}\bigg) = \sum_{u\in M} A_u(0)x^{-u}, \]
i.~e., we evaluate all the $A_u(\lambda)$ at $\lambda = 0$.  We claim that $\phi'$ is the inverse of $\phi$.  It is straightforward to check that 
\[ D^*_{i,0}\bigg(\sum_{u\in M} A_u(0)x^{-u}\bigg) = 0 \]
 for $i=1,\dots,n$, so $\phi'$ maps ${\mathcal K}({\mathbb C}[[\lambda]])$ into ${\mathcal K}_0({\mathbb C})$.  It follows from the definition of $\phi$ that if $\xi=\sum_{u\in M} c_ux^{-u}\in{\mathcal K}_0({\mathbb C})$, then $\phi'(\phi(\xi)) = \xi$, so $\phi'$ is onto.  The map $\phi'$ is also injective because $\sum_{u\in M} A_u(\lambda)x^{-u}\in{\mathcal K}({\mathbb C}[[\lambda]])$ is determined by its constant terms $\sum_{u\in M} A_u(0)x^{-u}$.  To see this, suppose that $c_k$ is the coefficient of $\lambda^k$ in the series $A_u(\lambda)$.  The condition $(\partial_\lambda A_u)(\lambda) = -\ell_0 A_{u+{\bf a}_0}(\lambda)$ for all $u\in M$ implies that the constant term of $(-\ell_0)^k A_{u + k{\bf a}_0}(\lambda)$ equals $ k! c_k$.  So if the constant terms $A_u(0)$ vanish for all $u\in M$ then the series $A_u(\lambda)$ vanish for all $u\in M$.
\end{proof}

\section{Basis for ${\rm Hom}_{\mathcal D}({\mathcal W},{\mathbb C}[[\lambda]])$}

In this section we find a basis for ${\mathcal W}_0$, compute the dual basis of ${\rm Hom}_{\mathbb C}({\mathcal W}_0,{\mathbb C})$, and use the isomorphism $\phi$ of the previous section to get a basis for ${\rm Hom}_{\mathcal D}({\mathcal W},{\mathbb C}[[\lambda]])$.  Let ${\mathcal B}$ be as in Section 1 and let $X^{\mathcal B}=\{ x^{\bf b}\mid {\bf b}\in{\mathcal B}\}$.  

Let $\boldsymbol{\alpha} = (a_{ij})$ ($i=1,\dots,n$, $j=1,\dots,m$) be the $n\times m$ matrix whose columns are the ${\bf a}_j$.  Since the ${\bf a}_j$ are linearly independent, there exists an $n\times n$ matrix $\boldsymbol{\beta} = (\beta_{ij})$ of rational numbers such that
\begin{equation}
\boldsymbol{\beta\alpha} = \begin{pmatrix} 1 & 0 & \dots & 0  & 0\\ 0 & 1 & \dots & 0 & 0 \\ \vdots & \vdots & \cdots & \vdots & \vdots \\ 0 & 0 & \dots & 0 & 1\\ 0 & 0 & \dots & 0 & 0 \\ \vdots & \vdots & \vdots & \vdots & \vdots \\ 0 & 0 & \dots & 0 & 0\end{pmatrix}. 
\end{equation}

We define differential operators $\tilde{D}_{i,0}$, $i=1,\dots,n$, on $S_0$ by the formula
\begin{equation}
\begin{pmatrix} \tilde{D}_{1,0} \\ \vdots \\ \tilde{D}_{n,0} \end{pmatrix} = \boldsymbol{\beta}\begin{pmatrix} {D}_{1,0} \\ \vdots \\ {D}_{n,0} \end{pmatrix} = \begin{pmatrix} \sum_{j=1}^n \beta_{1j}D_{j,0} \\ \vdots \\ \sum_{j=1}^n \beta_{nj}D_{j,0} \end{pmatrix}.
\end{equation}

\begin{lemma}
Let $u\in M$ and write $u =(u_1,\dots,u_n) = \sum_{j=1}^m u'_j{\bf a}_j$ with $u'_j\in{\mathbb Q}$ for $j=1,\dots,m$.  We have
\[ \tilde{D}_{j,0}(x^u) = \begin{cases} u'_jx^u + \ell_j x^{u+{\bf a}_j} & \text{for $j=1,\dots,m$,} \\
0 & \text{for $j=m+1,\dots,n$.} \end{cases} \]
\end{lemma}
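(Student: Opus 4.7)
The proof will be a direct computation that essentially just unwinds definitions and applies the matrix identity (3.1) twice, in two different ways.

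First, I would compute the action of the original operators $D_{i,0}$ on $x^u$. Since $\partial f_0 / \partial x_i = \sum_{k=1}^m \ell_k a_{ik} x^{\mathbf{a}_k - \mathbf{e}_i}$ (here $\mathbf{e}_i$ is the $i$th standard basis vector), multiplying by $x_i$ and adding $x_i \partial/\partial x_i$ gives
\[ D_{i,0}(x^u) = u_i x^u + \sum_{k=1}^m \ell_k a_{ik}\, x^{u+\mathbf{a}_k}. \]

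Next, substitute into the definition $\tilde{D}_{j,0} = \sum_{i=1}^n \beta_{ji} D_{i,0}$ and interchange the order of summation to obtain
\[ \tilde{D}_{j,0}(x^u) = \Bigl(\sum_{i=1}^n \beta_{ji} u_i\Bigr) x^u + \sum_{k=1}^m \ell_k\, (\boldsymbol{\beta\alpha})_{jk}\, x^{u+\mathbf{a}_k}. \]

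Now I would apply (3.1) to each piece. For the second sum, (3.1) says $(\boldsymbol{\beta\alpha})_{jk} = \delta_{jk}$ when $1 \leq j \leq m$ and $(\boldsymbol{\beta\alpha})_{jk} = 0$ when $j > m$; thus this sum collapses to $\ell_j x^{u+\mathbf{a}_j}$ in the first case and vanishes in the second. For the coefficient of $x^u$, I would use the hypothesis $u = \sum_{k=1}^m u'_k \mathbf{a}_k$ (well-defined by linear independence of $A$) to write $u_i = \sum_{k=1}^m u'_k a_{ik}$, so
\[ \sum_{i=1}^n \beta_{ji} u_i = \sum_{k=1}^m u'_k \sum_{i=1}^n \beta_{ji} a_{ik} = \sum_{k=1}^m u'_k\, (\boldsymbol{\beta\alpha})_{jk}, \]
which by (3.1) equals $u'_j$ for $j \leq m$ and $0$ for $j > m$. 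Combining the two cases yields the two branches of the formula.

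There is no real obstacle in this lemma; it is a purely formal consequence of the definitions and the identity (3.1). The only conceptual remark worth making explicit is that the rational coefficients $u'_j$ are uniquely determined because $\mathbf{a}_1,\dots,\mathbf{a}_m$ are linearly independent over $\mathbb{R}$, which is part of the standing hypothesis on $A$.
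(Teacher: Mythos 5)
Your proposal is correct and follows essentially the same route as the paper: both expand $D_{i,0}(x^u)$, apply $\boldsymbol\beta$, and invoke (3.1) twice (once for the $x^u$ coefficient via $u=\sum_k u'_k\mathbf a_k$, once for the $x^{u+\mathbf a_k}$ terms). The paper simply organizes the computation in matrix form (its equations (3.4)--(3.7)) while you do it entry-by-entry; there is no substantive difference.
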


\begin{proof}
We note that
\begin{equation}
 \begin{pmatrix} x_1\frac{\partial f_0}{\partial x_1} \\ \vdots \\ x_n\frac{\partial f_0}{\partial x_n} \end{pmatrix}  = 
\boldsymbol{\alpha}\begin{pmatrix} \ell_1 x^{{\bf a}_1} \\ \vdots \\ \ell_mx^{{\bf a}_m} \end{pmatrix}
\end{equation}
and 
\begin{equation}
\boldsymbol{\alpha}\begin{pmatrix} u'_1 \\ \vdots \\ u'_m \end{pmatrix} = \begin{pmatrix} u_1 \\ \vdots \\ u_n \end{pmatrix}.
\end{equation}
From the definition we have
\begin{align} 
\begin{pmatrix} \tilde{D}_{1,0} \\ \vdots \\ \tilde{D}_{n,0} \end{pmatrix}(x^u)  & = \boldsymbol{\beta} \begin{pmatrix} x_1\frac{\partial}{\partial x_1} + x_1 \frac{\partial f_0}{\partial x_1} \\ \vdots \\ x_n\frac{\partial}{\partial x_n} + x_n \frac{\partial f_0}{\partial x_n}\end{pmatrix} (x^u) \\
 &= \boldsymbol{\beta}\begin{pmatrix} u_1 \\ \vdots \\ u_n \end{pmatrix} x^u + \boldsymbol{\beta} \begin{pmatrix}  x_1 \frac{\partial f_0}{\partial x_1} \\ \vdots \\ x_n \frac{\partial f_0}{\partial x_n}\end{pmatrix}x^u \nonumber \\
  &= \boldsymbol{\beta\alpha}\begin{pmatrix} u'_1 \\ \vdots \\ u'_m\end{pmatrix} x^u + \boldsymbol{\beta\alpha}\begin{pmatrix} \ell_1x^{{\bf a}_1} \\ \vdots \\ \ell_mx^{{\bf a}_m} \end{pmatrix} x^u  \nonumber \\
  & = \begin{pmatrix} u'_1x^u + \ell_1 x^{u+{\bf a}_1} \\ \vdots \\ u'_mx^u + \ell_m x^{u+{\bf a}_m} \\ 0 \\ \vdots \\ 0 \end{pmatrix} \nonumber,
 \end{align}
where the third equality follows from (3.4) and (3.5) and the fourth equality follows from (3.1).
\end{proof}

Since $\boldsymbol{\beta}$ is invertible, Equation (3.2) and Lemma 3.3 imply the following result.
\begin{corollary}
We have ${\mathcal W}_0 = S_0/\sum_{j=1}^m \tilde{D}_{j,0}S_0$.
\end{corollary}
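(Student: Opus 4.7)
The plan is to show the two submodules $\sum_{i=1}^n D_{i,0}S_0$ and $\sum_{j=1}^m \tilde{D}_{j,0}S_0$ of $S_0$ are in fact equal, from which the identification of the quotients is immediate.

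First, I would observe that by Lemma 3.3, for each $j\in\{m+1,\dots,n\}$ the operator $\tilde{D}_{j,0}$ annihilates every monomial $x^u$ with $u\in M$. Since these monomials ${\mathbb C}$-linearly span $S_0$ and $\tilde{D}_{j,0}$ is ${\mathbb C}$-linear (being a ${\mathbb Q}$-linear combination of the ${\mathbb C}$-linear operators $D_{i,0}$), it follows that $\tilde{D}_{j,0}\equiv 0$ on $S_0$ for $j=m+1,\dots,n$. Consequently
\[
\sum_{j=1}^n \tilde{D}_{j,0}S_0 = \sum_{j=1}^m \tilde{D}_{j,0}S_0.
\]

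Next, since the matrix $\boldsymbol{\beta}$ is invertible over ${\mathbb Q}\subseteq{\mathbb C}$, Equation~(3.2) can be inverted to express each $D_{i,0}$ as a ${\mathbb C}$-linear combination of $\tilde{D}_{1,0},\dots,\tilde{D}_{n,0}$, and conversely each $\tilde{D}_{j,0}$ is already a ${\mathbb C}$-linear combination of $D_{1,0},\dots,D_{n,0}$. Therefore the two families of operators generate the same ${\mathbb C}$-linear image in $S_0$:
\[
\sum_{i=1}^n D_{i,0}S_0 = \sum_{j=1}^n \tilde{D}_{j,0}S_0.
\]

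Combining the two displayed equalities yields $\sum_{i=1}^n D_{i,0}S_0 = \sum_{j=1}^m \tilde{D}_{j,0}S_0$, and quotienting $S_0$ by this common submodule gives the desired identification of ${\mathcal W}_0$. There is no real obstacle here; the only point that requires a moment of care is confirming that Lemma 3.3, which is stated only on the monomial generators $x^u$ with $u\in M$, indeed extends to the vanishing of $\tilde{D}_{j,0}$ on all of $S_0$ for $j>m$, and this is immediate from ${\mathbb C}$-linearity since $\{x^u\mid u\in M\}$ spans $S_0$ over ${\mathbb C}$.
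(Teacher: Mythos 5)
Your proposal is correct and follows exactly the argument the paper intends: Lemma 3.3 shows $\tilde{D}_{j,0}$ vanishes on $S_0$ for $j>m$, and the invertibility of $\boldsymbol{\beta}$ in Equation~(3.2) gives $\sum_{i=1}^n D_{i,0}S_0 = \sum_{j=1}^n \tilde{D}_{j,0}S_0$, so the two submodules coincide. The paper compresses this into a single sentence, but your spelled-out version is the same reasoning.
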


The following result is clear.
\begin{lemma}
 The set $X^{\mathcal B}$ is a basis for $S_0/\sum_{j=1}^m x^{{\bf a}_j}S_0$.
 \end{lemma}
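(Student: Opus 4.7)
The plan is to use the linear independence of $A$ to put every monomial in $S_0$ into a canonical form and then read off the quotient. Since $\{{\bf a}_1,\dots,{\bf a}_m\}$ is a ${\mathbb Q}$-basis of $V$, every $u\in M$ has a unique expression $u=\sum_{j=1}^m u'_j{\bf a}_j$ with $u'_j\in{\mathbb Q}$, and the condition $u\in C(A)$ forces $u'_j\ge 0$. Splitting $u'_j=\lfloor u'_j\rfloor+\{u'_j\}$ gives
\[
u={\bf b}(u)+\sum_{j=1}^m\lfloor u'_j\rfloor{\bf a}_j,\qquad {\bf b}(u):=\sum_{j=1}^m\{u'_j\}{\bf a}_j,
\]
where ${\bf b}(u)\in P(A)$ by construction and ${\bf b}(u)\in V_{\mathbb Z}$ since $u$ and the ${\bf a}_j$ are; hence ${\bf b}(u)\in{\mathcal B}$, and the map $u\mapsto({\bf b}(u),\lfloor u'_1\rfloor,\dots,\lfloor u'_m\rfloor)$ is a bijection $M\leftrightarrow {\mathcal B}\times{\mathbb Z}_{\ge 0}^m$.

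Spanning follows immediately: writing $x^u=x^{{\bf b}(u)}\prod_{j=1}^m(x^{{\bf a}_j})^{\lfloor u'_j\rfloor}$ shows that $x^u\in\sum_{j=1}^m x^{{\bf a}_j}S_0$ whenever some $\lfloor u'_j\rfloor>0$, i.e.\ whenever $u\notin{\mathcal B}$, while for $u\in{\mathcal B}$ the expression reduces to $x^u$ itself. Since $\{x^u\mid u\in M\}$ is a ${\mathbb C}$-basis of $S_0$, the images of $X^{\mathcal B}$ span the quotient.

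For linear independence I would show that $\sum_{j=1}^m x^{{\bf a}_j}S_0$ equals the ${\mathbb C}$-span of $\{x^u\mid u\in M\setminus{\mathcal B}\}$. One inclusion was just verified; for the other, $x^{{\bf a}_j}x^v=x^{{\bf a}_j+v}$ with $v\in M$, and the ${\bf a}_j$-coefficient of ${\bf a}_j+v$ in the basis $A$ is at least $1$, so ${\bf a}_j+v\notin{\mathcal B}$. In particular, no $x^{\bf b}$ with ${\bf b}\in{\mathcal B}$ appears in this span, because for ${\bf b}=\sum_j b'_j{\bf a}_j$ with $b'_j\in[0,1)$ the uniqueness of the expansion shows that ${\bf b}-{\bf a}_j$ has $j$-th coefficient $b'_j-1<0$, so ${\bf b}-{\bf a}_j\notin C(A)$ and ${\bf b}$ cannot equal ${\bf a}_j+v$ for any $v\in M$. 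Thus $S_0=\bigl(\bigoplus_{{\bf b}\in{\mathcal B}}{\mathbb C}x^{\bf b}\bigr)\oplus\sum_{j=1}^m x^{{\bf a}_j}S_0$, which gives the claimed basis. There is no real obstacle; the only subtlety worth flagging is that the whole argument pivots on the uniqueness of the coefficients $u'_j$, which is exactly the linear-independence hypothesis on $A$.
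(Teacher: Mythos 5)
Your proof is correct and complete. Note that the paper does not actually prove this lemma — it introduces Lemma~3.8 with ``The following result is clear'' and gives no argument — so there is nothing in the paper to compare against; your write-up supplies precisely the natural argument that the authors are treating as obvious. The decomposition $u = {\bf b}(u) + \sum_j \lfloor u'_j\rfloor\,{\bf a}_j$ using the fractional parts of the unique coordinates in the basis $A$ is the expected mechanism, the verification that ${\bf b}(u)\in{\mathcal B}$ and that the map is a bijection onto ${\mathcal B}\times{\mathbb Z}_{\ge 0}^m$ is sound, and the two-sided identification of $\sum_j x^{{\bf a}_j}S_0$ with ${\rm span}\{x^u\mid u\in M\setminus{\mathcal B}\}$ correctly establishes linear independence (the point that ${\bf b}-{\bf a}_j\notin C(A)$ for ${\bf b}\in{\mathcal B}$ is exactly what's needed). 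You correctly flag that everything hinges on the uniqueness of the coefficients $u'_j$, i.e., the linear independence of $A$.
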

 
The set $A_+$ lies on a hyperplane in $V$, which allows us to define a grading on the spaces $S$, $S'$, and $S_0$.  For $u = (u_1,\dots,u_n)\in M$ we write $u=\sum_{j=1}^m u'_j{\bf a}_j$ and define $\deg x^u = \sum_{j=1}^m u'_j$.  Equivalently, if $\sum_{i=1}^n h_iy_i=1$ is the equation of a hyperplane containing~$A_+$, then $\deg x^u = \sum_{i=1}^n h_iu_i$.  The degree is a surjective map from $\{x^u\mid u\in M\}$ onto the set $(q_0)^{-1}{\mathbb Z}_{\geq 0}$ for some positive integer $q_0$.  In the study of invertible polynomials, the integer $q_0$ is the degree of the polynomial and the $q_0h_i$, $i=1,\dots,n$, are the weights.  

{\bf Notation:}  Let $L$ be a module (over some ring) and let $\{\phi_j\}_{j=1}^m$ be commuting endomorphisms of $L$.  We denote by ${\rm Kos}^\bullet(L,\{\phi\}_{j=1}^m)$ the associated (cohomological) Koszul complex.  More precisely, let the $e_i$, $i=1,\dots,m$ be formal symbols and define
\[ {\rm Kos}^k(L,\{\phi\}_{j=1}^m) = \bigoplus_{1\leq j_1<\cdots<j_k\leq m} L\,e_{j_1}\wedge\cdots\wedge e_{j_k}. \]
The boundary map $\nabla:{\rm Kos}^k(L,\{\phi\}_{j=1}^m)\to {\rm Kos}^{k+1}(L,\{\phi\}_{j=1}^m)$ is given for $\xi\in L$ by
\[ \nabla(\xi\,e_{j_1}\wedge\cdots\wedge e_{j_k}) = \bigg(\sum_{j=1}^m \phi_j(\xi)\,e_j\bigg)\wedge e_{j_1}\wedge\cdots\wedge e_{j_k}. \]

\begin{proposition}
The set $X^{\mathcal B}$ is a basis for ${\mathcal W}_0$.
\end{proposition}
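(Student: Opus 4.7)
The plan is to use Corollary~3.6 to identify ${\mathcal W}_0$ with the top cohomology of the Koszul complex ${\rm Kos}^\bullet(S_0, \{\tilde D_{j,0}\}_{j=1}^m)$---the operators commute because they are ${\mathbb C}$-linear combinations of the commuting $D_{j,0}$---and then split this complex using the semigroup structure on $M$. The linear independence of $A$ implies that every $u \in M$ has a unique expression $u = {\bf b} + \sum_{l=1}^m k_l {\bf a}_l$ with ${\bf b} \in {\mathcal B}$ and $k_l \in {\mathbb Z}_{\geq 0}$, so that
\[
S_0 = \bigoplus_{{\bf b} \in {\mathcal B}} x^{\bf b}\, {\mathbb C}[y_1,\ldots,y_m], \qquad y_l := x^{{\bf a}_l}.
\]
Writing ${\bf b} = \sum_l b'_l {\bf a}_l$ with $b'_l \in [0,1)$, I would check from Lemma~3.3 that each $\tilde D_{j,0}$ preserves every summand and, upon identifying $x^{\bf b}{\mathbb C}[y]$ with ${\mathbb C}[y]$, acts there as the first-order operator $\tilde D_{j,0}^{\bf b}(f) = b'_j f + y_j\,\partial f/\partial y_j + \ell_j y_j f$. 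The Koszul complex splits accordingly, and it suffices to prove that for each ${\bf b}$ the quotient ${\mathcal W}_0^{\bf b} := {\mathbb C}[y] / \sum_{j=1}^m \tilde D_{j,0}^{\bf b}({\mathbb C}[y])$ is one-dimensional, spanned by the class of $1$ (which lifts to $x^{\bf b} \in X^{\mathcal B}$).

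For the spanning part, the congruence $\tilde D_{j,0}^{\bf b}(y^\alpha) \equiv 0$ in ${\mathcal W}_0^{\bf b}$ reads $\ell_j y^{\alpha + e_j} \equiv -(b'_j + \alpha_j)\,y^\alpha$, and iterating reduces any monomial to a scalar. For non-vanishing of the class of $1$, I would filter ${\mathbb C}[y]$ by total degree $F_p$; each $\tilde D_{j,0}^{\bf b}$ sends $F_p$ into $F_{p+1}$ with induced map on the associated graded equal to multiplication by $\ell_j y_j$. Since $\{y_1,\ldots,y_m\}$ is a regular sequence in ${\mathbb C}[y]$, the associated graded Koszul complex ${\rm Kos}^\bullet({\mathbb C}[y],\{\ell_j y_j\})$ is exact except at the top, where its cohomology is ${\mathbb C}[y]/(y_1,\ldots,y_m) = {\mathbb C}$.

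Finally, suppose $1 = \sum_j \tilde D_{j,0}^{\bf b}(\eta_j)$ and let $N$ be minimal with all $\eta_j \in F_N$. Since $1 \in F_0$, the degree-$(N{+}1)$ contribution $\sum_j \ell_j y_j \eta_j^{(N)}$ to the right-hand side vanishes; by exactness of the graded Koszul complex at step $m-1$, the tuple $(\eta_j^{(N)})_j$ is a Koszul boundary coming from some $(\mu_{jk})$ in degree $N-1$. Because the $\tilde D_{j,0}^{\bf b}$ commute (so $d^2 = 0$ in the ungraded Koszul complex), lifting $(\mu_{jk})$ to ${\mathbb C}[y]$ and subtracting the resulting boundary from $(\eta_j)_j$ leaves $\sum_j \tilde D_{j,0}^{\bf b}(\eta_j) = 1$ unchanged while placing all new $\eta_j$ in $F_{N-1}$, contradicting the minimality of $N$ until $N<0$ forces all $\eta_j = 0$ and hence $1 = 0$. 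The main obstacle is this filtered-lifting step: the essential point is that $d^2 = 0$ allows the subtraction to preserve the total $\sum_j \tilde D_{j,0}^{\bf b}(\eta_j)$, and its key input is that $\{y_j\}$ is a regular sequence in ${\mathbb C}[y]$, which in turn reflects the linear independence of $A$.
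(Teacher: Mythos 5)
Your argument is correct and rests on the same key idea as the paper's proof: filter the Koszul complex ${\rm Kos}^\bullet(S_0,\{\tilde D_{j,0}\})$ so that the associated graded complex becomes the Koszul complex on the regular sequence $\{\ell_j x^{{\bf a}_j}\}$, and transfer acyclicity in low degrees back to the original complex. The only difference is expositional: the paper quotes the general filtered-complex fact in one line, while you first split $S_0 = \bigoplus_{{\bf b}\in{\mathcal B}} x^{\bf b}\,{\mathbb C}[y]$ along cosets of ${\mathbb Z}A$ and then carry out the degree-descent lifting argument by hand, which is more elementary and self-contained but longer.
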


\begin{proof}
Consider the Koszul complex ${\rm Kos}^\bullet(S_0,\{\tilde{D}_{j,0}\}_{j=1}^m)$ defined by the $\tilde{D}_{j,0}$.  The above grading makes it into a filtered complex for which 
\[ H^m({\rm Kos}^\bullet(S_0,\{\tilde{D}_{j,0}\}_{j=1}^m)) = {\mathcal W}_0. \]
By Lemma 3.3 the associated graded complex is ${\rm Kos}^\bullet(S_0,\{\ell_jx^{{\bf a}_j}\}_{j=1}^m)$.  Since the set $\{x^{{\bf a}_j}\}_{j=1}^m$ is a regular sequence on $S_0$, we have $H^i({\rm Kos}^\bullet(S_0,\{\ell_jx^{{\bf a}_j}\}_{j=1}^m)) = 0$ for $0\leq i<m$.  It follows that ${\mathcal W}_0\cong H^m({\rm Kos}^\bullet(S_0,\{\ell_jx^{{\bf a}_j}\}_{j=1}^m))$.  But
\[ H^m({\rm Kos}^\bullet(S_0,\{\ell_jx^{{\bf a}_j}\}_{j=1}^m)) = S_0/\sum_{j=1}^m x^{{\bf a}_j}S_0, \]
 so Lemma 3.8 implies the proposition.
\end{proof}

We use the following notation, which we find simpler for some formulas than the classical Pochhammer notation.  Later we switch to Pochhammer notation to make the connection with hypergeometric series.  Define for $z\in{\mathbb C}$
\[ [z]_l = \begin{cases} 1 & \text{if $l=0$,} \\ \displaystyle \frac{1}{(z+1)(z+2)\cdots (z+l)} &  \text{if $l>0$ and $z\not\in\{-1,-2,\dots,-l\}$,} \\
z(z-1)\cdots(z+l+1) & \text{if $l<0$.} \end{cases} \]
The relation to the Pochhammer symbol $(z)_l$ is that $[z]_l$ is defined if and only if $(-z)_{-l}$ is defined, in which case
\begin{equation}
[z]_l = (-1)^l(-z)_{-l}.
\end{equation}

We construct the dual basis for ${\rm Hom}_{\mathbb C}({\mathcal W}_0,{\mathbb C})$.  Fix ${\bf b}\in {\mathcal B}$.  Then
\begin{equation}
{\bf b} = \sum_{j=1}^m v_j{\bf a}_j
\end{equation}
 for unique $v_1,\dots,v_m\in[0,1)$.   For $j=1,\dots,m$ we define series
\begin{equation}
 g_j(t) = \sum_{s\in{\mathbb Z}_{\leq 0}} [-v_j]_s t^{-v_j+s}. 
 \end{equation}
  Set
 \begin{equation}
  g_{\bf b} = \prod_{j=1}^m \ell_j^{v_j}g_j(\ell_jx^{{\bf a}_j}).
  \end{equation}
A straightforward calculation establishes the following result.
\begin{lemma}
We have $g_{\bf b} = \sum_{u\in M} g_u^{(\bf b)}x^{-u}$, where $g_u^{(\bf b)} = \prod_{j=1}^m \big([-v_j]_{s_j}\ell_j^{s_j}\big)$ if there exist $\{s_j\}_{j=1}^m\subseteq{\mathbb Z}_{\leq 0}$, necessarily unique, such that $-u = \sum_{j=1}^m (-v_j + s_j){\bf a}_j$ and $g_u^{(\bf b)} = 0$ otherwise.
\end{lemma}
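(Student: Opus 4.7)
The plan is a direct expansion of the product defining $g_{\bf b}$. First I would substitute $t = \ell_jx^{{\bf a}_j}$ into (3.12), so that each factor in (3.13) becomes
\[ \ell_j^{v_j}g_j(\ell_jx^{{\bf a}_j}) = \sum_{s_j\in{\mathbb Z}_{\leq 0}} [-v_j]_{s_j}\,\ell_j^{s_j}\,x^{(s_j-v_j){\bf a}_j}, \]
where the fractional power $\ell_j^{-v_j}$ produced inside $g_j(\ell_jx^{{\bf a}_j})$ cancels against the prefactor $\ell_j^{v_j}$. Multiplying the $m$ resulting series together and using the relation $\sum_{j=1}^m v_j{\bf a}_j = {\bf b}$ from (3.11), I obtain
\[ g_{\bf b} = \sum_{(s_1,\dots,s_m)\in{\mathbb Z}_{\leq 0}^m} \bigg(\prod_{j=1}^m [-v_j]_{s_j}\,\ell_j^{s_j}\bigg)\,x^{-{\bf b}+\sum_{j=1}^m s_j{\bf a}_j}. \]

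Next I would set $u := {\bf b} - \sum_{j=1}^m s_j{\bf a}_j$ and verify that $u\in M$: the vector $u$ lies in ${\mathbb Z}^n$ because ${\bf b}\in V_{\mathbb Z}$ and each ${\bf a}_j\in{\mathbb Z}^n$; it lies in $V$ since ${\bf b}\in V$ and the ${\bf a}_j$ span $V$; and it lies in $C(A)$ because its expansion $u=\sum_{j=1}^m (v_j-s_j){\bf a}_j$ has nonnegative real coefficients (since $v_j\in[0,1)$ and $-s_j\geq 0$). Hence every exponent $-u$ appearing on the right-hand side of the expansion has $u\in M$.

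Finally I would invoke the linear independence of $\{{\bf a}_j\}_{j=1}^m$: this makes the map $(s_1,\dots,s_m)\mapsto u$ injective, so regrouping the sum by $u\in M$ yields a coefficient of $x^{-u}$ equal to $\prod_{j=1}^m [-v_j]_{s_j}\ell_j^{s_j}$ when $u$ comes from a (necessarily unique) tuple in ${\mathbb Z}_{\leq 0}^m$, and $0$ otherwise, as claimed. There is no substantive obstacle; the only point requiring a little care is tracking how the individually fractional exponents in $\ell_j$ and in $x^{-v_j{\bf a}_j}$ reassemble via (3.11) into the integer vector $-u\in -M$.
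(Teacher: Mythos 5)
Your proof is correct and is exactly the ``straightforward calculation'' the paper alludes to without writing out: expand each factor $\ell_j^{v_j}g_j(\ell_j x^{{\bf a}_j})$ as a series with the $\ell_j^{-v_j}$ cancelling the prefactor, multiply the $m$ series, use $\sum_j v_j{\bf a}_j={\bf b}$ to reassemble the exponents, check $u={\bf b}-\sum_j s_j{\bf a}_j$ lies in $M$, and invoke linear independence of the ${\bf a}_j$ for uniqueness of the tuple $(s_1,\dots,s_m)$. No gaps.
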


\begin{proposition}
The set $\{g_{\bf b}\}_{{\bf b}\in{\mathcal B}}$ is the basis of ${\rm Hom}_{\mathbb C}({\mathcal W}_0,{\mathbb C})$ dual to the basis $X^{\mathcal B}$ of ${\mathcal W}_0$.
\end{proposition}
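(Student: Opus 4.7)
The plan is to verify the two defining properties of a dual basis: first, that each $g_{\bf b}$ descends to an element of $\mathrm{Hom}_{\mathbb C}({\mathcal W}_0,{\mathbb C})$; second, that the pairing $\langle g_{\bf b}, x^{{\bf b}'}\rangle$ equals $\delta_{{\bf b},{\bf b}'}$ on the basis $X^{\mathcal B}$ of ${\mathcal W}_0$. Since Proposition 3.10 gives $\dim_{\mathbb C}\mathrm{Hom}_{\mathbb C}({\mathcal W}_0,{\mathbb C}) = |{\mathcal B}|$, these two properties together force $\{g_{\bf b}\}_{{\bf b}\in{\mathcal B}}$ to be the dual basis.

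For the first property, I would use the pairing isomorphism (2.7) together with Corollary 3.6 to reduce the claim to checking that $\langle g_{\bf b}, \tilde{D}_{j,0}(x^v)\rangle = 0$ for every $v\in M$ and every $j=1,\ldots,m$. By Lemma 3.3 this amounts to the identity
\[ v'_j\, g_v^{({\bf b})} + \ell_j\, g_{v+{\bf a}_j}^{({\bf b})} = 0, \]
where $v = \sum_k v'_k{\bf a}_k$. By Lemma 3.13 the coefficient $g_v^{({\bf b})}$ is nonzero only if $v \equiv {\bf b} \pmod{{\mathbb Z}A}$, a condition preserved by translation by ${\bf a}_j$; outside this case both terms vanish and the identity is trivial. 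In the nontrivial case, setting $s_k = v_k - v'_k \in {\mathbb Z}_{\leq 0}$ and factoring the common product $\ell_j^{s_j}\prod_{k\neq j}[-v_k]_{s_k}\ell_k^{s_k}$ reduces the identity to
\[ (v_j - s_j)\,[-v_j]_{s_j} + [-v_j]_{s_j - 1} = 0, \]
which follows at once from the recursion $[z]_{l-1} = (z+l)[z]_l$ for $l\leq 0$ (immediate from the $l < 0$ clause of the definition of $[\cdot]_l$), applied at $z = -v_j$ and $l = s_j$.

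For the second property, I would evaluate $\langle g_{\bf b}, x^{{\bf b}'}\rangle = g_{{\bf b}'}^{({\bf b})}$ directly using Lemma 3.13. Because ${\mathcal B}$ is a complete set of coset representatives for ${\mathbb Z}A$ in $V_{\mathbb Z}$, one has ${\bf b}' \equiv {\bf b}\pmod{{\mathbb Z}A}$ if and only if ${\bf b}' = {\bf b}$, so $g_{{\bf b}'}^{({\bf b})} = 0$ whenever ${\bf b}' \neq {\bf b}$; when ${\bf b}' = {\bf b}$ the lemma applies with all $s_k = 0$ and yields $g_{\bf b}^{({\bf b})} = 1$. The only genuine calculation in the argument is the identity displayed in step one, and the main subtlety there is simply remembering that for non-positive indices the symbol $[\cdot]_l$ is a falling factorial rather than a reciprocal rising factorial; the recursion $[z]_{l-1} = (z+l)[z]_l$ handles this cleanly, so I do not anticipate any serious obstacle beyond this bookkeeping.
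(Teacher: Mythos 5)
Your proof is correct, and the second half (evaluating $\langle g_{\bf b},x^{{\bf b}'}\rangle$ via Lemma~3.14 and the coset-representative property of ${\mathcal B}$) is essentially what the paper does. The first half, however, takes a genuinely different route. The paper checks $D_{i,0}^*(g_{\bf b})=0$ for $i=1,\dots,n$ directly, exploiting the product structure $g_{\bf b}=\prod_j\ell_j^{v_j}g_j(\ell_jx^{{\bf a}_j})$: it differentiates this product, uses the identity $g_k'(t)=g_k(t)-t^{-v_k}$, and observes that the residual term $x^{(1-v_k){\bf a}_k}\prod_{j\neq k}g_j(\ell_jx^{{\bf a}_j})$ is annihilated by $\gamma_-$ because the exponent of ${\bf a}_k$ is positive. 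You instead pass through Corollary~3.7 to replace the $D_{i,0}$ by the ``diagonalized'' operators $\tilde D_{j,0}$ (Lemma~3.3), which turns the condition into a single two-term scalar relation
\[
(v_j-s_j)\,[-v_j]_{s_j}+[-v_j]_{s_j-1}=0
\]
among coefficients, which drops out of the recursion $[z]_{l-1}=(z+l)[z]_l$. The paper's argument is slicker at the level of formal power series and never needs to name individual coefficients, while yours is more combinatorial and isolates exactly which algebraic identity drives the vanishing; it also reuses Lemma~3.3 and Corollary~3.7, which the paper invokes only for the basis computation. One small thing to make explicit: when you ``factor the common product,'' the displayed scalar identity is only equivalent to $v_j'g_v^{({\bf b})}+\ell_j g_{v+{\bf a}_j}^{({\bf b})}=0$ when the common product is nonzero; when it vanishes (e.g.\ some $v_k=0$ with $s_k<0$, $k\neq j$) both sides are already zero, so the conclusion still holds. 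Also note the references should read Lemma~3.14, Corollary~3.7, and Proposition~3.9 rather than 3.13, 3.6, 3.10.
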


\begin{proof}
We first check that the $g_{\bf b}$ lie in ${\rm Hom}_{\mathbb C}({\mathcal W}_0,{\mathbb C})$.
By Proposition 2.9 we need to check that $D^*_{i,0}(g_{\bf b}) = 0$ for $i=1,\dots,n$.  First note that 
\begin{align*}
x_i\frac{\partial g_{\bf b}}{\partial x_i} &= \sum_{k=1}^m \bigg(\prod_{\substack{j=1\\ j\neq k}}^m \ell_j^{v_j}g_j(\ell_jx^{{\bf a}_j})\bigg)  x_i\frac{\partial}{\partial x_i}\big(\ell_k^{v_k}g_k(\ell_kx^{{\bf a}_k})\big)\\
 &= \sum_{k=1}^m \bigg(\prod_{\substack{j=1\\ j\neq k}}^m \ell_j^{v_j}g_j(\ell_jx^{{\bf a}_j})\bigg)(\ell_k^{v_k+1}a_{ik}x^{{\bf a}_k})g_k'(\ell_kx^{{\bf a}_k}).
\end{align*}
Since $g_k'(t) = g_k(t) -t^{-v_j}$, this reduces to
\begin{equation}
x_i\frac{\partial g_{\bf b}}{\partial x_i} = \sum_{k=1}^m \bigg(\prod_{\substack{j=1\\ j\neq k}}^m \ell_j^{v_j}g_j(\ell_jx^{{\bf a}_j})\bigg)
\big(\ell_k^{v_k+1}a_{ik}x^{{\bf a}_k}g_k(\ell_kx^{{\bf a}_k}) - a_{ik}\ell_kx^{(-v_k+1){\bf a}_k}\big).
\end{equation}
Because $(-v_k+1)>0$, no term in the product $x^{(-v_k+1){\bf a}_k}\prod_{\substack{j=1\\ j\neq k}}^m g_j(\ell_jx^{{\bf a}_j})$ contains a term $x^{-u}$ with $u\in M$.  It follows that
\begin{align*} 
\gamma_- \bigg(x_i\frac{\partial g_{\bf b}}{\partial x_i}\bigg) &=  \sum_{k=1}^m \bigg(\prod_{\substack{j=1\\ j\neq k}}^m \ell_j^{v_j}g_j(\ell_jx^{{\bf a}_j})\bigg)
\big(\ell_k^{v_k+1}a_{ik}x^{{\bf a}_k}g_k(\ell_kx^{{\bf a}_k}) \big) \\ 
&= \gamma_-\bigg(\sum_{k=1}^m \ell_k a_{ik}x^{{\bf a}_k} g_{\bf b}\bigg) = \gamma_-\bigg(x_i\frac{\partial f_0}{\partial x_i}g_{\bf b}\bigg).
\end{align*}
This equation implies that $D^*_{i,0}(g_{\bf b})=0$ for all $i$.

We also need to check that if ${\bf b},{\bf b}'\in{\mathcal B}$, then
\[ \langle g_{\bf b},x^{{\bf b}'}\rangle = \begin{cases} 1 & \text{if ${\bf b} = {\bf b}'$,} \\ 0 & \text{if ${\bf b}\neq {\bf b}'$.}
\end{cases} \]
It follows from Lemma 3.14 that
\[ \langle g_{\bf b},x^{{\bf b}'}\rangle = g^{({\bf b})}_{{\bf b}'} =\prod_{j=1}^m \big([-v_j]_{s_j}\ell_j^{s_j}\big), \]
where the $v_j$ are determined by the condition $\sum_{j=1}^m v_j{\bf a}_j = {\bf b}$ and the $s_j$ are determined by the condition $-{\bf b}' = \sum_{j=1}^m (-v_j + s_j){\bf a}_j$.  If the second condition has a solution $\{s_j\}_{j=1}^m\subseteq{\mathbb Z}_{\leq 0}$, then ${\bf b}\equiv {\bf b}'\pmod{{\mathbb Z}A}$.  But this can happen only if ${\bf b} = {\bf b}'$, so $g^{({\bf b})}_{{\bf b}'} = 0$ if ${\bf b}\neq{\bf b}'$.  And if ${\bf b}={\bf b}'$, then $s_j=0$ for all $j$ so $g^{({\bf b})}_{{\bf b}'} = 1$.  
\end{proof}

We can now apply the isomorphism of Theorem 2.1.  Define
 \begin{equation}
 \xi_{\bf b} = \phi(g_{\bf b}) = \gamma_-\big(\exp(-\ell_0\lambda x^{{\bf a}_0})g_{\bf b}\big).
 \end{equation}
By Theorem 2.1, we have the following result.
\begin{corollary}
The set $\{\xi_{\bf b}\}_{{\bf b}\in{\mathcal B}}$ is a basis for ${\rm Hom}_{\mathcal D}({\mathcal W},{\mathbb C}[[\lambda]])$.
\end{corollary}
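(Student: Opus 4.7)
The corollary is essentially immediate from the machinery already developed. The plan is to combine three ingredients: (i) Proposition 3.15, which identifies $\{g_{\bf b}\}_{{\bf b}\in{\mathcal B}}$ as a basis of ${\rm Hom}_{\mathbb C}({\mathcal W}_0,{\mathbb C})$; (ii) Propositions 2.6 and 2.9, which identify the two Hom spaces with $\mathcal{K}(\mathbb{C}[[\lambda]])$ and $\mathcal{K}_0(\mathbb{C})$ respectively; and (iii) the proof of Theorem 2.1, where the map $\phi$ is shown to be a $\mathbb{C}$-linear isomorphism $\mathcal{K}_0(\mathbb{C}) \xrightarrow{\sim} \mathcal{K}(\mathbb{C}[[\lambda]])$ (with explicit two-sided inverse $\phi'$).

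First I would note that composing the isomorphisms from Propositions 2.6 and 2.9 with $\phi$ yields a $\mathbb{C}$-linear isomorphism
\[
\Phi \colon {\rm Hom}_{\mathbb C}({\mathcal W}_0,{\mathbb C}) \xrightarrow{\;\sim\;} {\rm Hom}_{\mathcal D}({\mathcal W},{\mathbb C}[[\lambda]]).
\]
Under the identifications of Propositions 2.6 and 2.9, each $g_{\bf b} \in {\rm Hom}_{\mathbb C}({\mathcal W}_0,{\mathbb C})$ corresponds to its representative in $\mathcal{K}_0(\mathbb{C})$ (also denoted $g_{\bf b}$), and $\Phi(g_{\bf b})$ is represented by $\phi(g_{\bf b}) = \gamma_-(\exp(-\ell_0\lambda x^{{\bf a}_0})g_{\bf b}) = \xi_{\bf b}$, which is exactly the definition in (3.17).

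Since $\{g_{\bf b}\}_{{\bf b}\in{\mathcal B}}$ is a $\mathbb{C}$-basis of ${\rm Hom}_{\mathbb C}({\mathcal W}_0,{\mathbb C})$ by Proposition 3.15, and $\Phi$ is a $\mathbb{C}$-linear isomorphism, the image $\{\Phi(g_{\bf b})\}_{{\bf b}\in{\mathcal B}} = \{\xi_{\bf b}\}_{{\bf b}\in{\mathcal B}}$ is a $\mathbb{C}$-basis of ${\rm Hom}_{\mathcal D}({\mathcal W},{\mathbb C}[[\lambda]])$. There is no real obstacle here; the substance lies in the earlier results, and all that remains is to observe that the construction of $\xi_{\bf b}$ in (3.17) is precisely the image of the dual basis element $g_{\bf b}$ under the explicit isomorphism constructed in the proof of Theorem 2.1.
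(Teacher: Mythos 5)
Your proposal is correct and follows exactly the paper's argument: the paper defines $\xi_{\bf b}=\phi(g_{\bf b})$ in (3.17) and then simply invokes Theorem 2.1 (whose proof exhibits $\phi$ as the explicit isomorphism ${\mathcal K}_0({\mathbb C})\to{\mathcal K}({\mathbb C}[[\lambda]])$) together with Proposition 3.15. You have merely unpacked the same one-line deduction.
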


Define $G^{(\bf b)}_u(\lambda)$ by the formula
\begin{equation}
 \xi_{\bf b} = \sum_{u\in M} G^{(\bf b)}_u(\lambda) x^{-u}. 
 \end{equation}
From Lemma 3.14, we have the following explicit formula, which will be simplified later.
\begin{align}
G^{(\bf b)}_u(\lambda)  &= \sum_{\substack{w\in M,\,s\in{\mathbb Z}_{\geq 0}\\-w + s{\bf a}_0 = -u}} g^{(\bf b)}_w\frac{(-\ell_0\lambda)^s}{s!} \\
 &= \sum_{\substack{s_0\in{\mathbb Z}_{\geq 0},\,s_1,\dots,s_m\in{\mathbb Z}_{\leq 0}\\ s_0{\bf a}_0 + \sum_{j=1}^m (-v_j+s_j){\bf a}_j = -u}} \bigg(\prod_{j=1}^m [-v_j]_{s_j}\prod_{j=1}^m \ell_j^{s_j}\bigg) \frac{(-\ell_0)^{s_0}}{s_0!} \lambda^{s_0}. \nonumber
 \end{align}
 
 \section{Structure of solutions}

In this section we analyze Equation (3.20) to describe the $\xi_{\bf b}$ in more detail.  We first make a general observation.

There is a direct sum decomposition of ${\mathcal W}$ as ${\mathcal D}$-module.  Put $N=[V_{\mathbb Z}:{\mathbb Z}A_+]$.  We have
\[ N=\frac{[V_{\mathbb Z}:{\mathbb Z}A]}{[{\mathbb Z}A_+:{\mathbb Z}A]} = \frac{d}{\ell_0}. \]
Let ${\mathcal C}_k$, $k=1,\dots,N$ be the cosets of ${\mathbb Z}A_+$ in $V_{\mathbb Z}$ and put $M_k=M\cap {\mathcal C}_k$.  Let $S_k$ be the ${\mathbb C}[\lambda]$-module generated by the $x^u$ with $u\in M_k$.  The $S_k$ are disjoint, stable under the action of the $D_i$ and $D_\lambda$, and their union is $S$.  Put ${\mathcal W}_k = S_k/\sum_{i=1}^n D_iS_k$.  
We have the following obvious result and corollaries.
\begin{proposition}
There is a direct sum decomposition ${\mathcal W}\cong\bigoplus_{k=1}^N{\mathcal W}_k$.
\end{proposition}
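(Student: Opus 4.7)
The proposition essentially follows from a decomposition at the level of $S$, which then descends to ${\mathcal W}$. The plan is as follows.

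First, I would note that since the cosets ${\mathcal C}_1,\dots,{\mathcal C}_N$ partition $V_{\mathbb Z}$, the sets $M_k = M\cap{\mathcal C}_k$ partition $M$. Consequently, as ${\mathbb C}[\lambda]$-modules we have the direct sum decomposition $S = \bigoplus_{k=1}^N S_k$, where the sum is direct simply because distinct monomials $x^u$ with $u\in M$ are linearly independent over ${\mathbb C}[\lambda]$ and each $x^u$ belongs to a unique $S_k$.

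The key observation is that each $S_k$ is stable under all of the operators $D_1,\dots,D_n,D_\lambda$. Indeed, for $u\in M_k$ a direct computation gives
\[
D_i(x^u) = u_ix^u - \ell_0a_{i0}\lambda x^{u+{\bf a}_0} + \sum_{j=1}^m \ell_j a_{ij}x^{u+{\bf a}_j},
\]
and every exponent appearing on the right-hand side differs from $u$ by an element of $\{0,{\bf a}_0,{\bf a}_1,\dots,{\bf a}_m\}\subseteq {\mathbb Z}A_+$, so each lies in $M_k$ (using that $C(A)$ is closed under addition of the ${\bf a}_j$). Similarly, $D_\lambda(p(\lambda)x^u) = p'(\lambda)x^u - \ell_0p(\lambda)x^{u+{\bf a}_0}$ lies in $S_k$. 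Extending by ${\mathbb C}[\lambda]$-linearity, each $S_k$ is closed under the $D_i$ and under $D_\lambda$.

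Stability of the $S_k$ under the $D_i$ implies that $\sum_{i=1}^n D_iS = \bigoplus_{k=1}^N \sum_{i=1}^n D_iS_k$, so taking the quotient yields
\[
{\mathcal W} = \frac{S}{\sum_{i=1}^n D_iS} = \bigoplus_{k=1}^N \frac{S_k}{\sum_{i=1}^n D_iS_k} = \bigoplus_{k=1}^N{\mathcal W}_k.
\]
Stability under $D_\lambda$, together with the fact that $D_\lambda$ commutes with each $D_i$ (noted in Section~1), equips each summand ${\mathcal W}_k$ with a ${\mathcal D}$-module structure and shows the decomposition is as ${\mathcal D}$-modules. There is no serious obstacle here; the only thing to check carefully is that the orbits of the $D_i$ and $D_\lambda$ stay inside the same coset of ${\mathbb Z}A_+$, which is immediate from the form of the operators.
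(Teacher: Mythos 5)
Your proof is correct and matches the paper's approach exactly: the paper states the proposition as an ``obvious result'' after noting that the $S_k$ are disjoint, stable under the $D_i$ and $D_\lambda$, and have union $S$, and you have simply supplied the routine verification of those three facts (partition of $M$ by cosets, stability because the operators shift exponents by elements of ${\mathbb Z}A_+$) and the standard conclusion that the quotient then decomposes.
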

Let ${\mathcal F}$ be a left ${\mathcal D}$-module.
\begin{corollary}
There is a direct sum decomposiiton 
\[ {\rm Hom}_{{\mathcal D}}({\mathcal W},{\mathcal F})\cong\bigoplus_{k=1}^N{\rm Hom}_{\mathcal D}({\mathcal W}_k,{\mathcal F}). \]
\end{corollary}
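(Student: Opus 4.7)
The plan is to observe that this corollary is a purely formal consequence of Proposition~4.1 combined with the universal property of the direct sum in the category of left ${\mathcal D}$-modules. For any finite family $\{{\mathcal M}_k\}_{k=1}^N$ of left ${\mathcal D}$-modules and any left ${\mathcal D}$-module ${\mathcal F}$, one has the canonical isomorphism ${\rm Hom}_{\mathcal D}\bigl(\bigoplus_{k=1}^N{\mathcal M}_k,{\mathcal F}\bigr)\cong \prod_{k=1}^N{\rm Hom}_{\mathcal D}({\mathcal M}_k,{\mathcal F})$, and since $N$ is finite this product coincides with the direct sum. So the task reduces to writing down the explicit isomorphism in our setting.

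Concretely, let $\iota_k\colon{\mathcal W}_k\hookrightarrow{\mathcal W}$ and $\pi_k\colon{\mathcal W}\twoheadrightarrow{\mathcal W}_k$ denote the inclusion and projection furnished by Proposition~4.1. Both maps are ${\mathcal D}$-linear because, as noted in the paragraph preceding Proposition~4.1, each $S_k$ is stable under $D_1,\dots,D_n$ and $D_\lambda$, so the decomposition ${\mathcal W}\cong\bigoplus_{k=1}^N{\mathcal W}_k$ is a decomposition of ${\mathcal D}$-modules. Define
\[ \Phi\colon{\rm Hom}_{\mathcal D}({\mathcal W},{\mathcal F})\longrightarrow\bigoplus_{k=1}^N{\rm Hom}_{\mathcal D}({\mathcal W}_k,{\mathcal F}),\qquad \phi\mapsto(\phi\circ\iota_1,\dots,\phi\circ\iota_N), \]
with proposed inverse $(\phi_1,\dots,\phi_N)\mapsto\sum_{k=1}^N\phi_k\circ\pi_k$. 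The standard identities $\sum_{k=1}^N\iota_k\circ\pi_k={\rm id}_{\mathcal W}$ and $\pi_k\circ\iota_j=\delta_{jk}\,{\rm id}_{{\mathcal W}_k}$ immediately yield that $\Phi$ and its inverse compose to the respective identities, while the ${\mathcal D}$-linearity of $\iota_k$ and $\pi_k$ guarantees that both maps land in the asserted ${\mathcal D}$-linear Hom spaces.

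There is no genuine obstacle in this argument: all the substantive content—namely, that the partition $S=\bigoplus_{k=1}^N S_k$ according to cosets of ${\mathbb Z}A_+$ in $V_{\mathbb Z}$ descends to a direct sum decomposition of ${\mathcal W}$ as a ${\mathcal D}$-module—has already been absorbed into Proposition~4.1. What remains is merely the additivity of ${\rm Hom}_{\mathcal D}(-,{\mathcal F})$ in its first argument applied to a finite direct sum, which is why the statement is offered as a corollary rather than as an independent result.
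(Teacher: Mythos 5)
Your argument is correct and is precisely the reasoning the paper has in mind: the paper labels Corollary~4.2 as an ``obvious'' consequence of Proposition~4.1 and gives no separate proof, and the content you supply --- that the decomposition of ${\mathcal W}$ is ${\mathcal D}$-linear because each $S_k$ is stable under $D_1,\dots,D_n$ and $D_\lambda$, together with the additivity of ${\rm Hom}_{\mathcal D}(-,{\mathcal F})$ on finite direct sums --- is exactly what makes it obvious.
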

Put
\begin{multline*} 
{\mathcal K}_k({\mathcal F}) = \bigg\{ \xi = \sum_{u\in M_k} A_ux^{-u}\mid \text{$A_u\in{\mathcal F}$ for $u\in M_k$, $D_i^*(\xi) = 0$ for $i=1,\dots,n$,} \\
 \text{ and $\partial_\lambda(A_u) =-\ell_0A_{u+{\bf a}_0}$ for all $u\in M_k$}\bigg\}. 
 \end{multline*}
\begin{corollary}
We have ${\rm Hom}_{\mathcal D}({\mathcal W}_k,{\mathcal F})\cong {\mathcal K}_k({\mathcal F})$.
\end{corollary}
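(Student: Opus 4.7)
The plan is to run the pairing argument of Section~2 verbatim, but restricted to the coset~${\mathcal C}_k$. The one thing that has to be verified first is that every object that appears respects the decomposition into cosets of ${\mathbb Z}A_+$. Since ${\bf a}_0,{\bf a}_1,\dots,{\bf a}_m$ all lie in ${\mathbb Z}A_+$, each coset ${\mathcal C}_k$ is stable under translation by any of these vectors, so $D_i$ and $D_\lambda$ preserve $S_k$ (as already noted before Proposition~4.2). Dually, the operator $D_i^*$, which involves the substitutions $u\mapsto u-{\bf a}_j$ for $j=0,1,\dots,m$, preserves the subspace $R_k({\mathcal F}):=\{\sum_{u\in M_k}A_ux^{-u}\mid A_u\in{\mathcal F}\}$ of $R({\mathcal F})$; and the $\lambda$-recurrence $\partial_\lambda A_u=-\ell_0 A_{u+{\bf a}_0}$ only relates coefficients within a single coset.

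With this in hand, I would first restrict the pairing from Section~2 to a pairing $R_k({\mathcal F})\times S_k\to{\mathcal F}$ and note, exactly as in (2.3), that it induces an isomorphism ${\rm Hom}_{{\mathbb C}[\lambda]}(S_k,{\mathcal F})\cong R_k({\mathcal F})$. The calculations leading to (2.4) and~(2.5) then carry over word for word: an element of $R_k({\mathcal F})$ lies in ${\rm Hom}_{\mathcal D}(S_k,{\mathcal F})$ iff the $\lambda$-recurrence holds for all $u\in M_k$, and it descends to ${\mathcal W}_k$ iff $D_i^*(\xi)=0$ for $i=1,\dots,n$ (interpreting $D_i^*$ as acting on $R_k({\mathcal F})$). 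Intersecting the two conditions yields exactly ${\mathcal K}_k({\mathcal F})$, proving the claim.

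Equivalently, and perhaps more cleanly, one can deduce the corollary from Proposition~2.6 and Corollary~4.3 by observing that $R({\mathcal F})=\bigoplus_k R_k({\mathcal F})$ and that both the $D_i^*$-condition and the $\lambda$-recurrence respect this decomposition, whence ${\mathcal K}({\mathcal F})=\bigoplus_k {\mathcal K}_k({\mathcal F})$; matching with the direct sum decomposition of ${\rm Hom}_{\mathcal D}({\mathcal W},{\mathcal F})$ gives the isomorphism summand by summand. There is no substantive obstacle — the only point needing care is the coset-stability of all operators in sight, and that is immediate from ${\bf a}_0,\dots,{\bf a}_m\in{\mathbb Z}A_+$.
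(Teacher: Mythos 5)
Your proof is correct, and it is essentially the argument the paper has in mind: the paper states this corollary (together with Proposition~4.2 and Corollary~4.3) without proof, calling them "obvious," and the underlying observation is precisely the coset-stability you verify, namely that $D_i$, $D_\lambda$, $D_i^*$, and the recurrence $\partial_\lambda A_u=-\ell_0A_{u+{\bf a}_0}$ all respect the decomposition by cosets of ${\mathbb Z}A_+$ because ${\bf a}_0,\dots,{\bf a}_m\in{\mathbb Z}A_+$. Either of your two formulations (rerunning the Section~2 pairing argument on $S_k$ and $R_k({\mathcal F})$, or splitting ${\mathcal K}({\mathcal F})=\bigoplus_k{\mathcal K}_k({\mathcal F})$ and matching summands against Corollary~4.3 via Proposition~2.6) is a valid way to spell out what the paper leaves implicit.
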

Let ${\mathcal B}_k = {\mathcal B}\cap{\mathcal C}_k$.  Note that $\lvert{\mathcal B}_k\rvert = d/N = \ell_0$ for $k=1,\dots,N$.
\begin{corollary}
The set $\{\xi_{\bf b}\}_{{\bf b}\in{\mathcal B}_k}$ is a basis for ${\rm Hom}_{\mathcal D}({\mathcal W}_k,{\mathbb C}[[\lambda]])$.
\end{corollary}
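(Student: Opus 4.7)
The plan is to deduce this corollary from the direct sum decomposition of Corollary~4.3 and the global basis statement of Corollary~3.18, with one verification in between: namely that each $\xi_{\bf b}$ has support concentrated in the ``right'' coset.

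First I would use that $\mathcal{B} = \bigsqcup_{k=1}^N \mathcal{B}_k$ partitions $\mathcal{B}$, since $\mathcal{B}_k = \mathcal{B}\cap\mathcal{C}_k$ and the $\mathcal{C}_k$ are disjoint cosets of $\mathbb{Z}A_+$ in $V_{\mathbb{Z}}$. Combined with Corollary~3.18, this gives a decomposition
\[ {\rm Hom}_{\mathcal D}({\mathcal W},{\mathbb C}[[\lambda]]) = \bigoplus_{k=1}^N \bigoplus_{{\bf b}\in\mathcal{B}_k} {\mathbb C}\,\xi_{\bf b}. \]
On the other hand, Corollary~4.3 gives
\[ {\rm Hom}_{\mathcal D}({\mathcal W},{\mathbb C}[[\lambda]]) \cong \bigoplus_{k=1}^N {\rm Hom}_{\mathcal D}({\mathcal W}_k,{\mathbb C}[[\lambda]]) \cong \bigoplus_{k=1}^N \mathcal{K}_k({\mathbb C}[[\lambda]]). \]
The goal is to identify these two decompositions term-by-term.

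The key step is to verify that $\xi_{\bf b}\in\mathcal{K}_k({\mathbb C}[[\lambda]])$ whenever ${\bf b}\in\mathcal{B}_k$; that is, the expansion of $\xi_{\bf b}$ in (3.19) is supported on $x^{-u}$ with $u\in M_k$. This is immediate from the explicit formula (3.20): for $G^{(\bf b)}_u(\lambda)$ to be nonzero, there must exist $s_0\in\mathbb{Z}_{\geq 0}$ and $s_1,\dots,s_m\in\mathbb{Z}_{\leq 0}$ with
\[ u = -s_0{\bf a}_0 + \sum_{j=1}^m (v_j - s_j){\bf a}_j = {\bf b} - s_0{\bf a}_0 - \sum_{j=1}^m s_j{\bf a}_j, \]
using (3.12). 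Hence $u\equiv {\bf b}\pmod{{\mathbb Z}A_+}$, so $u\in\mathcal{C}_k$, and therefore $u\in M_k$.

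It follows that the inclusion $\bigoplus_{{\bf b}\in\mathcal{B}_k} {\mathbb C}\,\xi_{\bf b} \hookrightarrow \mathcal{K}_k({\mathbb C}[[\lambda]])$ is compatible with the direct sum decompositions above. Since the full collection $\{\xi_{\bf b}\}_{{\bf b}\in\mathcal{B}}$ is a basis for the whole Hom-space, each piece $\{\xi_{\bf b}\}_{{\bf b}\in\mathcal{B}_k}$ must span and be linearly independent in the corresponding summand $\mathcal{K}_k({\mathbb C}[[\lambda]]) \cong {\rm Hom}_{\mathcal D}({\mathcal W}_k,{\mathbb C}[[\lambda]])$, proving the corollary. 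There is no serious obstacle here; the only thing to check is the support statement, which is a one-line consequence of (3.20).
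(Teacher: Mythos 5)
Your proposal is correct and matches the paper's (implicit) argument: the paper states this corollary without a written proof, but immediately afterward records the key support observation (Equation (4.6)) that you derive from (3.20), and the rest is exactly the compatibility of the basis $\{\xi_{\bf b}\}_{{\bf b}\in\mathcal{B}}$ from Corollary~3.18 with the direct-sum decomposition of Corollaries~4.2--4.4.
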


 The condition on the second sum in Equation (3.20) shows that if ${\bf b}\in{\mathcal B}_k$, then
\begin{equation}
G^{({\bf b})}_u(\lambda) = 0 \quad\text{if $u\not\in M_k$.}
\end{equation}
Equation (3.19) thus becomes, for ${\bf b}\in{\mathcal B}_k$,
\begin{equation}
\xi_{\bf b} = \sum_{u\in M_k}G^{({\bf b})}_u(\lambda)x^{-u}.
\end{equation}

The following theorem connects the $\xi_{\bf b}$ with classical hypergeometric series.  
\begin{theorem}
Let ${\bf b}\in{\mathcal B}_k$ and $u\in M_k$.  Write ${\bf b} = \sum_{j=1}^m v_j{\bf a}_j$.  Let $s_0\in\{ 0,1,\dots,\ell_0-1\}$ be the unique element satisfying $u+s_0{\bf a}_0\equiv{\bf b}\pmod{{\mathbb Z}A}$ and let $s_1,\dots,s_m\in{\mathbb Z}$ be the unique solution to the equation
\begin{equation}
u+s_0{\bf a}_0 = {\bf b}-\sum_{j=1}^m s_j{\bf a}_j.
\end{equation}
Then
\begin{equation}
G_u^{({\bf b})}(\lambda) = \bigg(\prod_{j=1}^m [-v_j]_{s_j}\prod_{j=1}^m \ell_j^{s_j}\bigg) \frac{(-\ell_0)^{s_0}}{s_0!}F_u^{({\bf b})}(\lambda),
\end{equation}
where
\begin{equation}
F_u^{({\bf b})}(\lambda) = \lambda^{s_0} \sum_{s=0}^\infty \frac{\displaystyle \prod_{j=1}^m \bigg(\frac{v_j-s_j}{\ell_j}\bigg)_s\bigg(\frac{v_j-s_j+1}{\ell_j}\bigg)_s\cdots\bigg(\frac{v_j-s_j+\ell_j-1}{\ell_j}\bigg)_s}{\displaystyle \bigg(\frac{s_0+1}{\ell_0}\bigg)_s\bigg(\frac{s_0+2}{\ell_0}\bigg)_s\cdots\bigg(\frac{s_0+\ell_0}{\ell_0}\bigg)_s}\lambda^{s\ell_0}.
\end{equation}
\end{theorem}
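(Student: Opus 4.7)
The starting point is Equation (3.20), which expresses $G_u^{({\bf b})}(\lambda)$ as a sum over tuples $(s_0,s_1,\dots,s_m)$ with $s_0\in{\mathbb Z}_{\geq 0}$, $s_j\in{\mathbb Z}_{\leq 0}$ for $j=1,\dots,m$, satisfying $s_0{\bf a}_0+\sum_{j=1}^m(-v_j+s_j){\bf a}_j=-u$. The plan is to parameterize these tuples and then rewrite the general term as a ratio against the initial term.

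The first step is to parameterize the solution set. Since ${\bf a}_1,\dots,{\bf a}_m$ are linearly independent and ${\bf a}_0$ is in their span via (1.1), the equation $u+s_0{\bf a}_0={\bf b}-\sum_j s_j{\bf a}_j$ uniquely determines $s_1,\dots,s_m$ once $s_0$ is fixed. Modulo ${\mathbb Z}A$, the congruence $u+s_0{\bf a}_0\equiv{\bf b}$ pins down $s_0$ modulo $\ell_0$, using $[{\mathbb Z}A_+:{\mathbb Z}A]=\ell_0$. Let $(s_0,s_1,\dots,s_m)$ denote the distinguished solution with $s_0\in\{0,1,\dots,\ell_0-1\}$ from the theorem statement. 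Using relation (1.1) in the form $\ell_0{\bf a}_0=\sum\ell_j{\bf a}_j$, one checks directly that the full solution set in (3.20) is exactly $(s_0+\ell_0 s,\,s_1-\ell_1 s,\,\dots,\,s_m-\ell_m s)$ for $s\in{\mathbb Z}_{\geq 0}$ (taking $s$ large enough ensures $s_j-\ell_j s\leq 0$, and for small $s$ the bracket symbols $[-v_j]_{s_j-\ell_j s}$ that arise are simply zero in the degenerate cases, giving no contribution, so the parameterization is consistent). The only term with $s=0$ gives the claimed prefactor $\bigl(\prod_j[-v_j]_{s_j}\ell_j^{s_j}\bigr)(-\ell_0)^{s_0}\lambda^{s_0}/s_0!$.

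The second step is to compute the ratio of the $s$-th summand to the $0$-th summand. Using the gamma-function form $[z]_l=\Gamma(z+1)/\Gamma(z+l+1)$ (valid on both signs of $l$ via (3.10)), one finds
\[
\frac{[-v_j]_{s_j-\ell_j s}}{[-v_j]_{s_j}}=(-1)^{\ell_j s}\prod_{k=0}^{\ell_j s-1}(v_j-s_j+k),
\]
and the key Pochhammer reshaping is the regrouping $k=i+\ell_j r$ with $0\leq i<\ell_j$, $0\leq r<s$, which gives
\[
\prod_{k=0}^{\ell_j s-1}(v_j-s_j+k)=\ell_j^{\ell_j s}\prod_{i=0}^{\ell_j-1}\Bigl(\frac{v_j-s_j+i}{\ell_j}\Bigr)_s.
\]
The factor $\ell_j^{s_j-\ell_j s}/\ell_j^{s_j}$ cancels the $\ell_j^{\ell_j s}$. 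The same grouping argument applied to $(s_0+\ell_0 s)!/s_0!=\prod_{k=1}^{\ell_0 s}(s_0+k)$ produces the denominator $\ell_0^{\ell_0 s}\prod_{k=1}^{\ell_0}\bigl(\frac{s_0+k}{\ell_0}\bigr)_s$, and the factor $(-\ell_0)^{\ell_0 s}$ in the numerator combines with this to leave $(-1)^{\ell_0 s}/\prod_{k=1}^{\ell_0}\bigl(\frac{s_0+k}{\ell_0}\bigr)_s$.

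The final step is sign bookkeeping: the collected power of $-1$ is $(-1)^{s(\sum_j\ell_j+\ell_0)}=(-1)^{2s\ell_0}=1$ by (1.2). Assembling the pieces, the $s$-th term divided by the $0$-th term equals $\lambda^{\ell_0 s}$ times the hypergeometric ratio in (4.10), and summation over $s$ yields the theorem. I expect the principal obstacle to be the bookkeeping in the Pochhammer/bracket ratio when some $s_j$ are positive in the base solution, where one has to confirm that the Gamma-function manipulation remains valid and that the formal series is consistent with the original sum (3.20); everything else is routine once the parameterization in the first step is in hand.
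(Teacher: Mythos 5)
Your overall strategy---reparameterize the index set in (3.20) by $s\in{\mathbb Z}_{\geq 0}$, then compute the ratio of the $s$-th term to the $0$-th---is exactly the paper's, and your Pochhammer-splitting and sign bookkeeping in Steps 2 and 3 are correct. However, Step 1 contains a genuine gap.

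You assert that the solution set of the summation condition in (3.20), with the constraints $s_0\in{\mathbb Z}_{\geq 0}$ and $s_j\in{\mathbb Z}_{\leq 0}$, is \emph{exactly} $\{(s_0+\ell_0 s,\,s_1-\ell_1 s,\dots,s_m-\ell_m s): s\in{\mathbb Z}_{\geq 0}\}$, and you try to cover the possibility that some base $s_j$ might be positive by claiming that ``for small $s$ the bracket symbols $[-v_j]_{s_j-\ell_j s}$ that arise are simply zero in the degenerate cases.'' That claim is false: for a positive index $l>0$ the paper defines $[z]_l = 1/((z+1)(z+2)\cdots(z+l))$, which is never zero (and since $z=-v_j\in(-1,0]$ it is also never undefined). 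So if the base $s_j$ were positive for some $j$, the series would not begin at $s=0$ and the closed form (4.9)--(4.10) would fail; there is no vanishing to rescue it. What is actually needed, and what the paper supplies in one line, is the geometric observation that the base solution automatically has $s_j\leq 0$ for all $j$: rewriting (4.8) as $u+s_0{\bf a}_0=\sum_{j=1}^m(v_j-s_j){\bf a}_j$, the left-hand side lies in $C(A)$ (since $u\in M$, $s_0\geq 0$, and ${\bf a}_0$ is interior), so by linear independence $v_j-s_j\geq 0$ for all $j$, and since $v_j<1$ this forces $s_j\leq 0$. Once that is in place your parameterization is correct with no exceptional cases, and the rest of your computation goes through verbatim. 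Replace the ``degenerate cases vanish'' sentence with this argument and the proof is complete.
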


\begin{proof}
The condition on the second summation in (3.20) is equivalent to (4.8).
The solution $\{s_j\}_{j=1}^m$ of (4.8) must satisfy $s_j\leq 0$ for $j=1,\dots,m$ because the left-hand side of (4.8) lies in $C(A)$ while the right-hand side of (4.8) equals $\sum_{j=1}^m (v_j-s_j){\bf a}_j$ and $v_j\in[0,1)$ for all $j$.

It now follows from (1.1) that all solutions of the condition on the summation in (3.20) are of the form
\begin{equation}
(s_0 +s\ell_0){\bf a}_0 -{\bf b} + \sum_{j=1}^m (s_j -s\ell_j){\bf a}_j = -u\quad\text{with $s\in{\mathbb Z}_{\geq 0}$.}
\end{equation}
We can thus write (3.20) as
\begin{equation}
G^{({\bf b})}_u(\lambda) = \lambda^{s_0} \sum_{s=0}^\infty \bigg(\prod_{j=1}^m [-v_j]_{s_j-s\ell_j}\prod_{j=1}^m \ell_j^{s_j-s\ell_j}\bigg) \frac{(-\ell_0)^{s_0+s\ell_0}}{(s_0 + s\ell_0)!}\lambda^{s\ell_0}.
\end{equation}

We simplify (4.12) by factoring out terms that do not depend on $s$.  We have
\[ [-v_j]_{s_j -s\ell_j} = [-v_j]_{s_j}[-v_j+s_j]_{-s\ell_j} \]
and
\[ (s_0+s\ell_0)! = s_0!(s_0+1)_{s\ell_0}. \]
We thus have
\begin{equation}
G_u^{({\bf b})}(\lambda) = \bigg(\prod_{j=1}^m [-v_j]_{s_j}\prod_{j=1}^m \ell_j^{s_j}\bigg) \frac{(-\ell_0)^{s_0}}{s_0!}F_u^{({\bf b})}(\lambda),
\end{equation}
where
\begin{equation}
F_u^{({\bf b})}(\lambda) = \lambda^{s_0}\sum_{s=0}^\infty \bigg(\prod_{j=1}^m[-v_j+s_j]_{-s\ell_j}\prod_{j=1}^m \ell_j^{-s\ell_j}\bigg) \frac{(-\ell_0)^{s\ell_0}}{(s_0+1)_{s\ell_0}}\lambda^{s\ell_0}.
\end{equation}
Using (3.10) we can rewrite this in terms of Pochhammer symbols:
\begin{equation}
F_u^{({\bf b})}(\lambda) = \lambda^{s_0} \sum_{s=0}^\infty \bigg(\prod_{j=1}^m \frac{(v_j-s_j)_{s\ell_j}}{\ell_j^{s\ell_j}}\bigg) \frac{\ell_0^{s\ell_0}}{(s_0+1)_{s\ell_0}}\lambda^{s\ell_0},
\end{equation}
where we have used (1.2) to eliminate minus signs.  Rearranging the right-hand side of (4.15) gives (4.10).
\end{proof}

If ${\bf b}\in{\mathcal B}_k$ is an interior point of $P(A)$, then $v_j>0$ for all $j$, so (4.9) and (4.10) imply that $G^{({\bf b})}_u(\lambda)\neq 0$ for all $u\in M_k$.  But if ${\bf b}$ is a boundary point of $P(A)$ many of the $G^{({\bf b})}_u(\lambda)$ vanish.  

Let ${\bf b}\in{\mathcal B}_k$ and let $\sigma_{\bf b}$ be the smallest closed face of $C(A)$ containing ${\bf b}$.  Denote by $\sigma_{\bf b}^\circ$ the relative interior of $\sigma_{\bf b}$, i.~e., $\sigma_{\bf b}^\circ$ equals $\sigma_{\bf b}$ minus all its proper closed subfaces.  Note that ${\bf b}\in\sigma_{\bf b}^\circ$.  Since ${\bf b} = \sum_{j=1}^m v_j{\bf a}_j$, we have $v_j\neq 0$ if and only if ${\bf a}_j\in\sigma_{\bf b}$.  

\begin{corollary}
Let ${\bf b}\in{\mathcal B}_k$ and $u\in M_k$.  Suppose that $\sigma_{\bf b}\neq C(A)$.  Then $G^{({\bf b})}_u(\lambda)$ is a nonzero constant if and only if $u\in \sigma_{\bf b}^\circ\cap ({\bf b} + {\mathbb Z}_{\geq 0}A)$, i.~e.,
\begin{equation}
u = {\bf b} + \sum_{\{j:{\bf a}_j\in\sigma_{\bf b}\}}t_j{\bf a}_j\quad\text{for (unique) $t_j\in{\mathbb Z}_{\geq 0}$}.
\end{equation}
When $(4.17)$ holds, one has
\begin{equation}
G^{({\bf b})}_u(\lambda) = \bigg( \prod_{\{j:{\bf a}_j\in\sigma_{\bf b}\}} \ell_j^{-t_j}\bigg)\bigg(\prod_{\{j:{\bf a}_j\in\sigma_{\bf b}\}} [-v_j]_{-t_j}\bigg).
\end{equation}
If $u$ is not of the form $(4.17)$, then $G^{({\bf b})}_u(\lambda) = 0$.
\end{corollary}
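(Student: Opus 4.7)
The plan is to run the formula of Theorem~4.7 through a direct case analysis. Writing
\[ G_u^{({\bf b})}(\lambda) = \bigg(\prod_{j=1}^m [-v_j]_{s_j}\bigg)\bigg(\prod_{j=1}^m \ell_j^{s_j}\bigg)\frac{(-\ell_0)^{s_0}}{s_0!}\, F_u^{({\bf b})}(\lambda), \]
one sees that the powers of $\ell_j$ and the factor $(-\ell_0)^{s_0}/s_0!$ are always nonzero, so the vanishing of $G_u^{({\bf b})}(\lambda)$ is controlled by the product $\prod_j [-v_j]_{s_j}$, while its being a \emph{constant} depends additionally on $s_0 = 0$ and on whether the series $F_u^{({\bf b})}(\lambda)$ truncates to $1$. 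The key elementary observation, via (3.10), is that $[-v_j]_{s_j} = 0$ precisely when $v_j = 0$ and $s_j < 0$; in every other case with $s_j \leq 0$ it is nonzero.

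The crux of the proof is ruling out the case $s_0 > 0$. Since $u \in M \subseteq C(A)$ and ${\bf a}_0 = \sum_j (\ell_j/\ell_0){\bf a}_j$, the relation (4.8) can be rewritten as
\[ u = \sum_{j=1}^m\left(v_j - s_j - \frac{s_0\ell_j}{\ell_0}\right){\bf a}_j, \]
and linear independence of $\{{\bf a}_j\}_{j=1}^m$ together with $u \in C(A)$ forces $v_j - s_j - s_0\ell_j/\ell_0 \geq 0$ for every $j$. For any $j$ with $v_j = 0$ this gives $s_j \leq -s_0\ell_j/\ell_0$, which is strictly negative once $s_0 \geq 1$. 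The hypothesis $\sigma_{\bf b} \neq C(A)$ guarantees at least one such index $j$, and the observation of the previous paragraph then yields $[-v_j]_{s_j} = 0$, hence $G_u^{({\bf b})}(\lambda) = 0$ whenever $s_0 > 0$.

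What remains is the case $s_0 = 0$, in which $u = {\bf b} - \sum_{j=1}^m s_j{\bf a}_j$ with $s_j \leq 0$. Here nonvanishing of $\prod_j [-v_j]_{s_j}$ is equivalent to $s_j = 0$ for every index with $v_j = 0$, which is exactly (4.17) after setting $t_j = -s_j$ for the indices with ${\bf a}_j \in \sigma_{\bf b}$. Conversely, for $u$ of this form the factor $(v_j/\ell_j)_s = (0)_s$ appearing in $F_u^{({\bf b})}(\lambda)$ kills every $s \geq 1$ term, so $F_u^{({\bf b})}(\lambda) = 1$; substituting $s_0 = 0$ and $s_j = -t_j$ into the formula of Theorem~4.7 and noting that each index with $v_j = 0 = s_j$ contributes $[0]_0 \ell_j^0 = 1$ collapses the expression to (4.18). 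The main obstacle here is not the bookkeeping leading to (4.17) and (4.18), which is straightforward, but the stronger claim that $s_0 > 0$ forces the \emph{vanishing} $G_u^{({\bf b})}(\lambda) = 0$ rather than merely a nonconstant monomial; this is where the geometric condition $u \in C(A)$ is used in an essential way.
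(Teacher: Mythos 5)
Your proof is correct and follows essentially the same strategy as the paper's: both analyze the closed form for $G_u^{({\bf b})}(\lambda)$ from Theorem~4.7 and reduce vanishing to the observation that $[-v_j]_{s_j}=0$ exactly when $v_j=0$ and $s_j<0$. The one genuine organizational difference is how the case $s_0>0$ is disposed of. The paper first sets $s_j=0$ for all $j$ with $v_j=0$ and then argues geometrically: the left side of (4.20) is then an interior point of $C(A)$ while the right side lies in the proper face $\sigma_{\bf b}^\circ$, a contradiction. You instead extract the numerical consequence of $u\in C(A)$, namely $v_j-s_j-s_0\ell_j/\ell_0\geq 0$ for every $j$, and observe that this forces some $s_j<0$ with $v_j=0$ whenever $s_0\geq 1$, so the same product factor vanishes. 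This folds the $s_0>0$ case into the single mechanism ``$[0]_{s_j}=0$ for $s_j<0$'' rather than requiring a separate geometric step, which is a modest simplification in bookkeeping, though the geometric content is of course the same. The verification that $F_u^{({\bf b})}(\lambda)=1$ when $u$ has the form (4.17), via the factor $(0)_s$ killing all $s\geq 1$ terms, matches the paper's reasoning.
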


\begin{proof}
Since $\sigma_{\bf b}\neq C(A)$, we must have $v_j=0$ for some $j$.  To fix ideas, suppose that $v_j\neq 0$ for $j=1,\dots,J$ and $v_j=0$ for $j=J+1,\dots,m$, where $J<m$.  Equation~(4.8) gives
\begin{equation}
u+s_0{\bf a}_0 = \sum_{j=1}^m (v_j-s_j){\bf a}_j. 
\end{equation}
If $s_j<0$ for some $j\in\{J+1,\dots,m\}$, then the factor $[-v_j]_{s_j} = [0]_{s_j} = 0$ appears on the right-hand side of (4.13), so $G^{({\bf b})}_u(\lambda)=0$.  So we assume $s_j=0$ for $j=J+1,\dots,m$ and (4.19) becomes
\begin{equation}
u+s_0{\bf a}_0 = \sum_{j=1}^J (v_j-s_j){\bf a}_j. 
\end{equation}
If $s_0>0$, then the left-hand side of (4.20) is an interior point of $C(A)$ while the right-hand side of (4.20) lies in $\sigma_{\bf b}^\circ$, so we must have $s_0=0$ and (4.20) becomes
\begin{equation}
u = {\bf b} -\sum_{j=1}^J s_j{\bf a}_j.
\end{equation}
This proves that $G^{({\bf b})}_u(\lambda)=0$ if $u$ is not of the form (4.17).  

Equation (4.21) gives $v_j=s_j=0$ for $j=J+1,\dots,m$, so (4.10) implies that $F^{({\bf b})}_u(\lambda) = 1$.  Equation~(4.18) then follows from (4.13).  
\end{proof}

\begin{corollary}
Suppose in addition to the hypotheses of Corollary $4.16$ that $u\in{\mathcal B}_k$.  Then
\begin{equation}
G^{({\bf b})}_u(\lambda) = \begin{cases} 1 & \text{if $u={\bf b}$,} \\ 0 & \text{if $u\neq {\bf b}$.}\end{cases}
\end{equation}
\end{corollary}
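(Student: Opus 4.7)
The plan is to apply Corollary 4.16 directly and then exploit the defining property of ${\mathcal B}$ as a set of coset representatives living inside the half-open parallelopiped $P(A)$. By Corollary 4.16, the only way $G^{({\bf b})}_u(\lambda)$ can be nonzero is if
\[ u = {\bf b} + \sum_{\{j:{\bf a}_j\in\sigma_{\bf b}\}} t_j{\bf a}_j \]
for some nonnegative integers $t_j$. Writing ${\bf b}=\sum_{j=1}^m v_j{\bf a}_j$ and recalling that $v_j\neq 0$ precisely when ${\bf a}_j\in\sigma_{\bf b}$, this gives
\[ u = \sum_{\{j:{\bf a}_j\in\sigma_{\bf b}\}} (v_j+t_j){\bf a}_j + \sum_{\{j:{\bf a}_j\notin\sigma_{\bf b}\}} 0\cdot{\bf a}_j. \]

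Next I would use the hypothesis $u\in{\mathcal B}_k\subseteq{\mathcal B}=V_{\mathbb Z}\cap P(A)$: the coefficients of $u$ in the basis $\{{\bf a}_j\}_{j=1}^m$ must all lie in $[0,1)$. For $j$ with ${\bf a}_j\in\sigma_{\bf b}$ we have $v_j\in(0,1)$ and $t_j\in{\mathbb Z}_{\geq 0}$, so the requirement $v_j+t_j<1$ forces $t_j=0$. (If $t_j\geq 1$, then $v_j+t_j>1$.) Consequently $u={\bf b}$ is the unique element of ${\mathcal B}_k$ for which $G^{({\bf b})}_u(\lambda)$ can fail to vanish, proving the ``$u\neq{\bf b}$'' case.

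Finally, for $u={\bf b}$ all the $t_j$ vanish, so formula (4.18) of Corollary 4.16 specializes to
\[ G^{({\bf b})}_{\bf b}(\lambda) = \prod_{\{j:{\bf a}_j\in\sigma_{\bf b}\}}\ell_j^{0}\cdot[-v_j]_0 = 1, \]
since $[z]_0=1$ by definition. This completes the proof.

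The argument is essentially a bookkeeping consequence of Corollary 4.16; there is no genuine obstacle, only the need to carefully combine the normal-form condition (4.17) with the half-open parallelopiped constraint defining ${\mathcal B}$. The one point worth writing out cleanly is the implication ``${\bf a}_j\in\sigma_{\bf b}\Rightarrow v_j>0$'', which was already observed just before Corollary 4.16 and is what makes the forcing $t_j=0$ work.
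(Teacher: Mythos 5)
Your proof is correct and follows essentially the same route as the paper: both arguments reduce to Corollary 4.16 and then use the fact that $u,{\bf b}\in{\mathcal B}$ to show (4.17) forces $u={\bf b}$ with all $t_j=0$. The paper simply invokes that ${\mathcal B}$ is a complete set of coset representatives for ${\mathbb Z}A$, while you unwind that fact into the explicit coordinate constraint $v_j+t_j\in[0,1)$ coming from $u\in P(A)$ and the uniqueness of the expansion in the basis $\{{\bf a}_j\}$; this is the same idea, just written out in more detail.
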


\begin{proof}
Since $u,{\bf b}\in{\mathcal B}_k$, Equation (4.17) has a solution if and only if $u = {\bf b}$, in which case all $t_j$ equal 0.  The assertion then follows from Corollary~4.16.
\end{proof}

Since $s_0\in\{0,1,\dots,\ell_0-1\}$, there are only finitely many possibilities for the denominator on the right-hand side of (4.10).    It follows that for all but finitely many $u$ there is no cancellation in the quotient on the right-hand side of (4.10).  But for $u\in{\mathcal B}_k$ there is often cancellation, which we describe precisely in what follows.

\section{The differential module ${\mathcal M}'$}

The main purpose of this section is to show that the set $X^{\mathcal B}$, which is a ${\mathbb C}$-basis for ${\mathcal W}_0$ by Proposition~3.9, is also a ${\mathbb C}(\lambda)$-basis for ${\mathcal M}'$.

Let $\Delta(A)\subseteq V$ be the convex hull of $A\cup\{{\bf 0}\}$.  For each face $\sigma$ of $\Delta(A)$ not containing ${\bf a}_0$, let $f_{{\lambda}}^{\sigma} = \sum_{{\bf a}_j\in\sigma} \ell_jx^{{\bf a}_j}$, a Laurent polynomial not involving $\lambda$.  
If $\sigma$ contains ${\bf a}_0$,  we define $f_{{\lambda}}^{\sigma} = f_{{\lambda}}$.  Recall that $f_{{\lambda}}$ is {\it nondegenerate relative to $\Delta(A)$\/} if for each face $\sigma$ of $\Delta(A)$ that does not contain the origin, the polynomials $\{x_i\partial f^\sigma_{{\lambda}}/\partial x_i\}_{i=1}^n$ have no common zero in $\big(\overline{{\mathbb C}(\lambda)}^\times\big)^n$, where $\overline{{\mathbb C}(\lambda)}$ denotes an algebraic closure of~${\mathbb C}(\lambda)$.

\begin{proposition}
The Laurent polynomial $f_{{\lambda}}$ is nondegenerate relative to $\Delta(A)$.
\end{proposition}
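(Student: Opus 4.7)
The plan is to enumerate the faces $\sigma$ of $\Delta(A)$ that do not contain the origin, and check the vanishing of $\{x_i\partial f^\sigma_\lambda/\partial x_i\}_{i=1}^n$ separately on each. Since $\{{\bf a}_1,\dots,{\bf a}_m\}$ is linearly independent in ${\mathbb R}^n$, the polytope $\Delta(A)={\rm conv}\{{\bf a}_1,\dots,{\bf a}_m,{\bf 0}\}$ is an $m$-simplex. Its faces that exclude the origin are exactly ${\rm conv}\{{\bf a}_j\mid j\in J\}$ for non-empty $J\subseteq\{1,\dots,m\}$. Since ${\bf a}_0$ lies in the relative interior of ${\rm conv}(A)$, the only such face containing ${\bf a}_0$ is ${\rm conv}(A)$ itself. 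So there are two cases.

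\textbf{Case 1: $J\subsetneq\{1,\dots,m\}$.} Here $f^\sigma_\lambda=\sum_{j\in J}\ell_jx^{{\bf a}_j}$, so the vanishing of $\{x_i\partial f^\sigma_\lambda/\partial x_i\}_{i=1}^n$ is the single vector equation
\[
\sum_{j\in J}\ell_jx^{{\bf a}_j}\,{\bf a}_j={\bf 0}\quad\text{in }\bigl(\overline{{\mathbb C}(\lambda)}\bigr)^n.
\]
Because the ${\bf a}_j$ are ${\mathbb Q}$-linearly independent (hence linearly independent over any characteristic-zero field), this forces $\ell_jx^{{\bf a}_j}=0$ for each $j\in J$, which is impossible on the torus.

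\textbf{Case 2: $\sigma={\rm conv}(A)$.} Now $f^\sigma_\lambda=f_\lambda$, and the vanishing condition is
\[
\sum_{j=1}^m \ell_jx^{{\bf a}_j}\,{\bf a}_j\;-\;\ell_0\lambda x^{{\bf a}_0}\,{\bf a}_0={\bf 0}.
\]
Substituting $\ell_0{\bf a}_0=\sum_{j=1}^m\ell_j{\bf a}_j$ from (1.1) and regrouping gives
\[
\sum_{j=1}^m\ell_j\bigl(x^{{\bf a}_j}-\lambda x^{{\bf a}_0}\bigr){\bf a}_j={\bf 0},
\]
and linear independence of the ${\bf a}_j$ forces $x^{{\bf a}_j}=\lambda x^{{\bf a}_0}$ for every $j$. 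The key step is now to raise the $j$-th relation to the $\ell_j$-th power and multiply: using (1.1) and (1.2),
\[
x^{\ell_0{\bf a}_0}=\prod_{j=1}^m x^{\ell_j{\bf a}_j}=\lambda^{\sum_{j=1}^m\ell_j}x^{(\sum_{j=1}^m\ell_j){\bf a}_0}=\lambda^{\ell_0}x^{\ell_0{\bf a}_0},
\]
so $\lambda^{\ell_0}=1$, contradicting the fact that $\lambda$ is transcendental over~${\mathbb C}$.

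The only obstacle worth flagging is verifying that no face of $\Delta(A)$ other than ${\rm conv}(A)$ contains ${\bf a}_0$; this is immediate from the hypothesis that ${\bf a}_0$ is an interior point of ${\rm conv}(A)$ together with the simplex structure of $\Delta(A)$. Everything else reduces to linear independence of $A$ and the normalization~(1.1)--(1.2).
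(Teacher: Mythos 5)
Your proof is correct and follows essentially the same route as the paper's: use linear independence of the $\mathbf{a}_j$ to reduce the common-zero condition on the face $\mathrm{conv}(A)$ to the system $x^{\mathbf{a}_j}=\lambda x^{\mathbf{a}_0}$, then multiply with exponents $\ell_j$ and invoke (1.1)--(1.2) to force $\lambda^{\ell_0}=1$, a contradiction. The only presentational difference is that the paper packages the linear-independence step via the matrices $\boldsymbol{\alpha},\boldsymbol{\beta}$ of Section~3, whereas you argue directly with the vector relation; the content is identical.
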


\begin{proof}
Suppose first that $\sigma$ is the (unique) face of $\Delta(A)$ not containing the origin but containing~${\bf a}_0$.  Then
\[ x_i\frac{\partial f^\sigma_{{\lambda}}}{\partial x_i} = \sum_{j=1}^m \ell_ja_{ij}x^{{\bf a}_j} - \ell_0a_{i0}{\lambda} x^{{\bf a}_0} \]
for $i=1,\dots,n$.  By (1.1), this can be rewritten as
\[ x_i\frac{\partial f^\sigma_{{\lambda}}}{\partial x_i} = \sum_{j=1}^m \ell_ja_{ij}(x^{{\bf a}_j} - {\lambda} x^{{\bf a}_0}), \]
or, in matrix form,
\begin{equation}
\begin{pmatrix} \displaystyle x_1\frac{\partial f_{{\lambda}}^\sigma}{\partial x_1} \\ \vdots \\ \displaystyle x_n\frac{\partial f_{{\lambda}}^\sigma}{\partial x_n} \end{pmatrix}  = \boldsymbol{\alpha}
\begin{pmatrix} \ell_1(x^{{\bf a}_1}-{\lambda} x^{{\bf a}_0}) \\ \vdots \\ \ell_m(x^{{\bf a}_m}-{\lambda} x^{{\bf a}_0}) \end{pmatrix},
\end{equation}
where $\boldsymbol{\alpha}$ is the matrix defined in Section 3.  Let $\boldsymbol{\beta}$ also be as defined in Section~3.
Multiplying both sides of (5.2) on the left by $\boldsymbol{\beta}$ gives
\begin{equation}
\boldsymbol{\beta} \begin{pmatrix} \displaystyle x_1\frac{\partial f_{{\lambda}}^\sigma}{\partial x_1} \\ \vdots \\ \displaystyle x_n\frac{\partial f_{{\lambda}}^\sigma}{\partial x_n} \end{pmatrix}  = \begin{pmatrix} \ell_1(x^{{\bf a}_1}-{\lambda} x^{{\bf a}_0}) \\ \vdots \\ \ell_m(x^{{\bf a}_m}-{\lambda} x^{{\bf a}_0})\\ 0 \\ \vdots \\ 0 \end{pmatrix}.
\end{equation}
Since $\boldsymbol{\beta}$ is invertible, this implies that the $x_i\partial f_{{\lambda}}^\sigma/\partial x_i$ have a common zero in $\big(\overline{{\mathbb C}(\lambda)}^\times\big)^n$ if and only if the $x^{{\bf a}_j}-{\lambda}x^{{\bf a}_0}$ have a common zero in $\big(\overline{{\mathbb C}(\lambda)}^\times\big)^n$.  But if $x^{{\bf a}_j} = {\lambda}x^{{\bf a}_0}$ for $j=1,\dots,m$, then
\[ \prod_{j=1}^m x^{\ell_j{\bf a}_j} = \prod_{j=1}^m {\lambda}^{\ell_j}x^{\ell_j{\bf a}_0}. \]
Using (1.1) and (1.2), this reduces to the equation $(1-\lambda^{\ell_0})x^{\ell_0{\bf a}_0} = 0$.  Since the $x_i$ lie in $\overline{{\mathbb C}(\lambda)}^\times$, it is not possible to satisfy this equation.

Now suppose $\sigma$ is a face of $\Delta(A)$ containing neither the origin nor ${\bf a}_0$.  Arguing as above, one obtains an equation analogous to (5.3) but with the nonzero terms in the column on the right-hand side being $\{\ell_jx^{{\bf a}_j}\}_{{\bf a}_j\in\sigma}$.  This shows that the $x_i\partial f_{{\lambda}}^\sigma/\partial x_i$ have a common zero in $\big(\overline{{\mathbb C}(\lambda)}^\times\big)^n$ if and only if the $x^{{\bf a}_j}$ for ${\bf a}_j\in\sigma$ have a common zero in $\big(\overline{{\mathbb C}(\lambda)}^\times\big)^n$.  It is clear that no such zero exists, thus $f_{{\lambda}}$ is nondegenerate.  
\end{proof}

Let ${\rm vol}(\Delta(A))$ be the volume of $\Delta(A)$ relative to Lebesgue measure on $V$ normalized by requiring the volume of a fundamental domain for $V_{\mathbb Z}$ to be 1.  Thus
\[ m!{\rm vol}(\Delta(A)) = {\rm vol}(P(A)) = [V_{\mathbb Z}:{\mathbb Z}A] = d. \]
From a result of Kouchnirenko \cite{Ko} (see also \cite[Theorem~2.17]{AS0}) we get the following corollary.
\begin{corollary}
One has $\dim_{{\mathbb C}(\lambda)}S'\big/\sum_{i=1}^n \big(x_i\frac{\partial f_{{\lambda}}}{\partial x_i}\big)S' = d$.
\end{corollary}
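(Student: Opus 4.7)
The plan is to apply Kouchnirenko's theorem in the form given by \cite[Theorem~2.17]{AS0}, whose sole nontrivial hypothesis is the nondegeneracy of $f_\lambda$ relative to $\Delta(A)$; this has already been established in Proposition~5.1. That theorem then identifies the dimension of $S'/\sum_{i=1}^n (x_i\,\partial f_\lambda/\partial x_i)S'$ as the normalized volume $m!\,\mathrm{vol}(\Delta(A))$, where the volume on $V$ is measured with respect to the lattice $V_{\mathbb Z}$. So conceptually the proof is a one-line invocation followed by a short volume computation.

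The volume computation is routine: since $A$ is linearly independent, $\Delta(A)$ is an $m$-dimensional simplex with vertices $\mathbf{0},\mathbf{a}_1,\dots,\mathbf{a}_m$, and standard simplex-parallelepiped geometry (decomposing $P(A)$ as $m!$ congruent simplices up to a boundary set of measure zero) gives $m!\,\mathrm{vol}(\Delta(A)) = \mathrm{vol}(P(A))$. Since $P(A)$ is a fundamental domain for $\mathbb{Z}A$ in $V_{\mathbb Z}$, its lattice volume is $[V_{\mathbb Z}:\mathbb{Z}A]=d$, as the authors already observe just before the corollary. Combining these gives the claimed dimension.

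The principal (and essentially only) obstacle is checking that the hypotheses and conclusions of \cite[Theorem~2.17]{AS0} match the present setup. Two mild subtleties deserve attention: first, the support of $f_\lambda$ spans only the proper subspace $V\subsetneq\mathbb{R}^n$, so the volume to be taken is the $m$-dimensional lattice volume on $V$, not on $\mathbb{R}^n$; second, $S'$ is built from the monoid $M=V_{\mathbb Z}\cap C(A)$ rather than from all Laurent monomials, so one must make sure that the version of the Jacobian quotient computed by \cite[Theorem~2.17]{AS0} is exactly the one appearing here. The cited theorem of \cite{AS0} is formulated precisely to accommodate this general setting, so the verification reduces to translating notation, after which the corollary follows immediately.
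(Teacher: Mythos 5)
Your proposal follows exactly the paper's route: invoke Kouchnirenko's theorem (via \cite[Theorem~2.17]{AS0}) using the nondegeneracy established in Proposition~5.1, then compute $m!\,\mathrm{vol}(\Delta(A)) = \mathrm{vol}(P(A)) = [V_{\mathbb Z}:\mathbb{Z}A] = d$. The two subtleties you flag (volume measured on $V$ relative to $V_{\mathbb Z}$, and the coordinate ring built from $M$) are precisely why the paper cites the version in \cite{AS0} rather than Kouchnirenko's original statement, so your reading is consistent with the authors' intent.
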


\begin{proposition}
The set $X^{\mathcal B}$ is a basis for $S'\big/\sum_{i=1}^n \big(x_i\frac{\partial f_{{\lambda}}}{\partial x_i}\big)S'$.  
\end{proposition}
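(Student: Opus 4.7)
Since Corollary~5.4 gives $\dim_{{\mathbb C}(\lambda)} S'/I = d = |X^{\mathcal B}|$, where $I := \sum_{i=1}^n(x_i\partial f_\lambda/\partial x_i)S'$, it suffices to show that $X^{\mathcal B}$ spans $S'/I$ over ${\mathbb C}(\lambda)$.

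First I would replace $I$ by a more convenient set of generators. Multiplying the column vector $(x_i\partial f_\lambda/\partial x_i)_{i=1}^n$ on the left by the invertible rational matrix $\boldsymbol\beta$ of Section~3 and applying the identity (5.3) (with $\sigma$ the face of $\Delta(A)$ containing ${\bf a}_0$, so that $f_\lambda^\sigma=f_\lambda$) yields $I=\sum_{j=1}^m(x^{{\bf a}_j}-\lambda x^{{\bf a}_0})S'$. Hence in $S'/I$ we have $x^{{\bf a}_j}\equiv\lambda x^{{\bf a}_0}$ for $j=1,\dots,m$. Taking the product of these relations with exponents $\ell_j$ and using (1.1)--(1.2) gives $x^{\ell_0{\bf a}_0}=\prod_j(x^{{\bf a}_j})^{\ell_j}\equiv\lambda^{\ell_0}(x^{{\bf a}_0})^{\ell_0}$, so $(1-\lambda^{\ell_0})(x^{{\bf a}_0})^{\ell_0}\equiv 0$; since $1-\lambda^{\ell_0}$ is a unit in ${\mathbb C}(\lambda)$, I conclude $(x^{{\bf a}_0})^{\ell_0}\equiv 0$ in $S'/I$.

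Next I would reduce an arbitrary $x^u$ ($u\in M$) modulo $I$ to a ${\mathbb C}(\lambda)$-multiple of some element of $X^{\mathcal B}$. Writing $u={\bf b}+\sum_{j=1}^m n_j{\bf a}_j$ as the unique decomposition with ${\bf b}\in\mathcal B$ and $n_j\in{\mathbb Z}_{\geq 0}$ (non-negativity of the $n_j$ follows since $u\in C(A)$ and the $v_j$ defining ${\bf b}$ lie in $[0,1)$), we obtain $x^u\equiv\lambda^N x^{{\bf b}+N{\bf a}_0}$ with $N:=\sum_j n_j$. If $N=0$ then $x^u=x^{\bf b}$; if $N\geq\ell_0$ then $x^u\equiv 0$ by the preceding paragraph; otherwise $0<N<\ell_0$ and I iterate the same step with $u$ replaced by $u_1:={\bf b}+N{\bf a}_0\in M$. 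This produces a deterministic sequence of pairs $({\bf b}_i,N_i)$ arising from the $\mathcal B$-decomposition of $u_i$, satisfying $x^u\equiv\lambda^{N_0+\cdots+N_{i-1}}x^{u_i}$ after $i$ iterations.

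The hard part will be showing that this procedure terminates. The key observation is that $u_{i+1}-u_i=N_i{\bf a}_0-\sum_j n_j^{(i)}{\bf a}_j\in{\mathbb Z}A_+$, so every $u_i$ lies in the same ${\mathbb Z}A_+$-coset $\mathcal C_k$ as $u_0$, forcing ${\bf b}_i\in\mathcal B_k$ (of cardinality $\ell_0$). During the active phase $N_i\in\{1,\dots,\ell_0-1\}$, so $({\bf b}_i,N_i)$ ranges over a finite set and the sequence either halts (with $N_i=0$ giving $x^u\equiv\lambda^{?}x^{{\bf b}_i}$, or $N_i\geq\ell_0$ giving $x^u\equiv 0$) or some pair repeats, say $({\bf b}_j,N_j)=({\bf b}_i,N_i)$ with $j>i$. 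In the repeating case $x^{u_i}\equiv\lambda^{N_i+\cdots+N_{j-1}}x^{u_i}$ with accumulated exponent at least $j-i>0$, and since $1-\lambda^{N_i+\cdots+N_{j-1}}$ is a unit in ${\mathbb C}(\lambda)$ this forces $x^{u_i}\equiv 0$, hence $x^u\equiv 0$. In either case $x^u$ lies in the ${\mathbb C}(\lambda)$-span of $X^{\mathcal B}$, completing the spanning argument and, via Corollary~5.4, the proof.
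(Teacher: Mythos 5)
Your proof is essentially correct and takes a recognizably different route from the paper's, though the two share the same skeleton (reduce $I$ to $\sum_j(x^{{\bf a}_j}-\lambda x^{{\bf a}_0})S'$, invoke Corollary~5.4 for the cardinality count, and for spanning ultimately exploit the relation $\prod_j(x^{{\bf a}_j})^{\ell_j}=x^{\ell_0{\bf a}_0}$ forced by~(1.1)--(1.2)). Where the paper peels off one ${\bf a}_{j_k}$ at a time, producing $x^{u_0}\equiv\lambda^K x^{u_K+{\bf a}_0}$ and showing (via a clever substitution of sequences using the relations $x^{{\bf a}_j}-x^{{\bf a}_{j'}}\in I$) that one always either lands in $X^{\mathcal B}$ within $\ell_0$ steps or derives $(1-\lambda^{\ell_0})x^{u_0}\equiv 0$, you isolate the key algebraic fact up front in the form $(x^{{\bf a}_0})^{\ell_0}\equiv 0$, do the reduction in one batch ($x^u\equiv\lambda^N x^{{\bf b}+N{\bf a}_0}$), and argue termination by pigeonhole on the deterministic iteration. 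Your formulation of the nilpotency is a clean reorganization, and your argument that $u_{i+1}={\bf b}_i+N_i{\bf a}_0$ stays in the fixed coset $\mathcal C_k$ is a nice observation not needed in the paper's bounded-step version. The price you pay is a weaker \emph{a priori} bound ($\ell_0(\ell_0-1)$ iterations rather than $\ell_0$) and a slightly more delicate termination argument.

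One small off-by-one slip in the termination step: a coincidence $({\bf b}_j,N_j)=({\bf b}_i,N_i)$ with $j>i$ does \emph{not} imply $u_j=u_i$, since the pair $({\bf b},N)$ records only $N=\sum_k n_k$ and not the individual $n_k$. What it does give is $u_{j+1}=u_{i+1}$, because $u_{k+1}={\bf b}_k+N_k{\bf a}_0$ depends only on the pair. The accumulated-exponent argument should therefore be run from index $i+1$: one gets $x^{u_{i+1}}\equiv\lambda^{N_{i+1}+\cdots+N_j}x^{u_{j+1}}=\lambda^{N_{i+1}+\cdots+N_j}x^{u_{i+1}}$ with exponent at least $j-i\geq 1$, forcing $x^{u_{i+1}}\equiv 0$ and hence $x^u\equiv 0$. (Equivalently, one may pigeonhole directly on the elements $u_i$ for $i\geq 1$, which all lie in the finite set $\{{\bf b}+N{\bf a}_0:{\bf b}\in\mathcal B_k,\,1\leq N\leq\ell_0-1\}$; then $u_j=u_i$ holds on the nose.) With that correction the argument is complete.
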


\begin{proof}
By Corollary 5.4  the set $X^{\mathcal B}$ has the correct cardinality, so it suffices to prove that it spans the quotient.  Since 
$\boldsymbol{\beta}$ is invertible, Equation (5.3) shows that
\begin{equation}
S'\big/\sum_{i=1}^n \big(x_i\frac{\partial f_{{\lambda}}}{\partial x_i}\big)S' = S'\big/ \sum_{j=1}^m (x^{{\bf a}_j}-{\lambda}x^{{\bf a}_0})S'.
\end{equation}
Let $u_0\in M$ and consider $x^{u_0}\in S'$.  If $u_0\in{\mathcal B}$ then $x^{u_0}\in X^{\mathcal B}$, so there is nothing to prove.  Suppose $u_0\not\in{\mathcal B}$.  Then there exist $j_1\in\{1,\dots,m\}$ and $u_1\in M$ such that 
$u_0 = {\bf a}_{j_1} +  u_1$, hence
\[ x^{u_0} = (x^{{\bf a}_{j_1}}-{\lambda}x^{{\bf a}_0})x^{u_1} +{\lambda} x^{u_1 + {\bf a}_0}. \]
If $u_1 + {\bf a}_0\in{\mathcal B}$, then $x^{u_1+{\bf a}_0}\in X^{\mathcal B}$ and we are done.  Otherwise there exist $j_2\in\{1,\dots,m\}$ and $u_2\in M$ such that $u_1+{\bf a}_0 = {\bf a}_{j_2} + u_2$, hence
\[ {\lambda}x^{u_1 + {\bf a}_0} = (x^{{\bf a}_{j_2}} -{\lambda}x^{{\bf a}_0}){\lambda}x^{u_2} + {\lambda}^2 x^{u_2 + {\bf a}_0}. \]
If $u_2+{\bf a}_0\in{\mathcal B}$, we are done, otherwise this procedure can be continued.  

If this procedure continues for $K$ steps with $u_k+{\bf a}_0\not\in{\mathcal B}$ for $k=0,1,\dots,K-1$, we get a sequence $u_0,u_1,\dots,u_K\in M$ with
\begin{equation}
 {\lambda}^{k-1} x^{u_{k-1} + {\bf a}_0} = (x^{{\bf a}_{j_k}} -{\lambda}x^{{\bf a}_0}){\lambda}^{k-1} x^{u_k} + {\lambda}^k x^{u_k + {\bf a}_0}.
 \end{equation}
Note that in this procedure we have inductively
\begin{equation}
x^{u_0}\equiv {\lambda}^kx^{u_k+{\bf a}_0}\pmod{\sum_{j=1}^m (x^{{\bf a}_j}-{\lambda}x^{{\bf a}_0})S'}
\end{equation}
and
\begin{equation}
u_k+{\bf a}_0= u_0 + k{\bf a}_0 - {\bf a}_{j_1} - \cdots - {\bf a}_{j_k}.
\end{equation}

If this process terminates for $K<\ell_0$, we are done.  If we can continue to $K=\ell_0 $, then Equations~(5.8) and~(5.9) imply that
\begin{equation}
x^{u_0}\equiv {\lambda}^{\ell_0}x^{u_{\ell_0}+{\bf a}_0}\pmod{\sum_{j=1}^m (x^{{\bf a}_j}-{\lambda}x^{{\bf a}_0})S'}
\end{equation}
and
\begin{equation}
u_{\ell_0}+{\bf a}_0= u_0 + \ell_0{\bf a}_0 - {\bf a}_{j_1} - \cdots - {\bf a}_{j_{\ell_0}}.
\end{equation}

Note that for any pair $j,j'\in\{1,\dots,m\}$ we have 
\[ x^{{\bf a}_j}-x^{{\bf a}_{j'}} = (x^{{\bf a}_j}-\lambda x^{{\bf a}_0}) - (x^{{\bf a}_{j'}}-\lambda x^{{\bf a}_0})\in \sum_{j=1}^m (x^{{\bf a}_j}-{\lambda}x^{{\bf a}_0})S'. \]
 So if $u-{\bf a}_j \in M$, we have
\[ x^{u-{\bf a}_j}(x^{{\bf a}_j} - x^{{\bf a}_{j'}}) = x^u-x^{u+{\bf a}_{j'}-{\bf a}_j}, \]
hence
\begin{equation}
x^u\equiv x^{u+{\bf a}_{j'}-{\bf a}_j} \pmod{\sum_{j=1}^m (x^{{\bf a}_j}-{\lambda}x^{{\bf a}_0})S'}.
\end{equation}
Applying this repeatedly to (5.11) we get
\begin{equation}
x^{u_{\ell_0} + {\bf a}_0}\equiv x^{u_0+ \ell_0{\bf a}_0 -{\bf a}_{j_1'} - \cdots - {\bf a}_{j_{\ell_0}'}} 
\pmod{\sum_{j=1}^m (x^{{\bf a}_j}-{\lambda}x^{{\bf a}_0})S'}
\end{equation}
for any sequence ${\bf a}_{j_1'},\dots,{\bf a}_{j_{\ell_0}'}$.  We now choose this sequence to be ${\bf a}_1$ taken $\ell_1$ times, ${\bf a}_2$ taken $\ell_2$ times, \dots, ${\bf a}_m$ taken $\ell_m$ times.  Equations (1.1) and (5.13) now imply
\begin{equation}
x^{u_{l_0} + {\bf a}_0}\equiv x^{u_0} \pmod{\sum_{j=1}^m (x^{{\bf a}_j}-{\lambda}x^{{\bf a}_0})S'}.
\end{equation}
From Equation (5.10) we get
\begin{equation}
({\lambda}^{\ell_0} - 1)x^{u_0}\equiv 0 \pmod{\sum_{j=1}^m (x^{{\bf a}_j}-{\lambda}x^{{\bf a}_0})S'},
\end{equation}
which completes the proof.
\end{proof}

We record for possible future use a corollary of the proof, which can be obtained by arguing with $S$ in place of $S'$.
\begin{corollary}
We have
\[ (1-\lambda^{\ell_0})S\subseteq \sum_{{\bf b}\in{\mathcal B}} {\mathbb C}[\lambda]x^{{\bf b}} + \sum_{j=1}^m (x^{{\bf a}_j} -\lambda x^{{\bf a}_0})S. \]
\end{corollary}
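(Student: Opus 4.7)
The plan is to rerun the iterative reduction from the proof of Proposition~5.5 verbatim, but with the essential observation that every manipulation stays within $S$ (i.e.\ uses only $\mathbb{C}[\lambda]$-coefficients) rather than $S'$. The whole point of Proposition~5.5 is that after at most $\ell_0$ reduction steps, $x^{u_0}$ is related to something in $X^{\mathcal B}$ modulo the relations $x^{{\bf a}_j}-\lambda x^{{\bf a}_0}$; the only reason that proof needed to pass to $S'$ was to invert $1-\lambda^{\ell_0}$ at the very end. Retaining this factor is exactly what gives the present corollary.

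Concretely, fix $x^{u_0}\in S$. If $u_0\in{\mathcal B}$, there is nothing to prove. Otherwise run the procedure of Proposition~5.5: at step $k$ we write $u_{k-1}+{\bf a}_0 = {\bf a}_{j_k} + u_k$ and use
\[ \lambda^{k-1} x^{u_{k-1} + {\bf a}_0} = (x^{{\bf a}_{j_k}} -\lambda x^{{\bf a}_0})\lambda^{k-1} x^{u_k} + \lambda^k x^{u_k + {\bf a}_0}, \]
which is exactly Equation~(5.6). The inductive congruence (5.8) then reads
\[ x^{u_0}\equiv \lambda^k x^{u_k+{\bf a}_0}\pmod{\textstyle\sum_{j=1}^m (x^{{\bf a}_j}-\lambda x^{{\bf a}_0})S}, \]
and this statement is genuinely in $S$ since every coefficient used is $\pm 1$ or a nonnegative power of $\lambda$.

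If the procedure halts at some $K<\ell_0$ with $u_K+{\bf a}_0\in{\mathcal B}$, then the above congruence shows that already $x^{u_0}\in \sum_{{\bf b}\in{\mathcal B}}\mathbb{C}[\lambda]x^{{\bf b}} + \sum_{j=1}^m(x^{{\bf a}_j}-\lambda x^{{\bf a}_0})S$, and a fortiori $(1-\lambda^{\ell_0})x^{u_0}$ lies there as well. If instead the procedure continues through $K=\ell_0$ steps, then (5.10) gives $x^{u_0}\equiv \lambda^{\ell_0} x^{u_{\ell_0}+{\bf a}_0}\pmod{\sum_j(x^{{\bf a}_j}-\lambda x^{{\bf a}_0})S}$, and the swap relation (5.12) — whose derivation uses only the identity $x^{{\bf a}_j}-x^{{\bf a}_{j'}}=(x^{{\bf a}_j}-\lambda x^{{\bf a}_0})-(x^{{\bf a}_{j'}}-\lambda x^{{\bf a}_0})$, again with no denominators — allows us to rewrite ${\bf a}_{j_1},\dots,{\bf a}_{j_{\ell_0}}$ as ${\bf a}_1$ taken $\ell_1$ times, etc., concluding via (1.1) that $x^{u_{\ell_0}+{\bf a}_0}\equiv x^{u_0}$. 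Combining the two congruences yields $(\lambda^{\ell_0}-1)x^{u_0}\equiv 0\pmod{\sum_{j=1}^m(x^{{\bf a}_j}-\lambda x^{{\bf a}_0})S}$, so $(1-\lambda^{\ell_0})x^{u_0}$ lies in the required sum.

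There is no real obstacle beyond the bookkeeping: the main point is to verify that nowhere in the Proposition~5.5 argument is a denominator in $\lambda$ introduced, so that each congruence modulo $\sum_j(x^{{\bf a}_j}-\lambda x^{{\bf a}_0})S'$ in fact already holds modulo $\sum_j(x^{{\bf a}_j}-\lambda x^{{\bf a}_0})S$. Once this is observed, the corollary is precisely the pre-final step of that proof, stated before the division by $1-\lambda^{\ell_0}$ is performed.
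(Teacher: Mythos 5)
Your proof is correct and is exactly the argument the paper has in mind: the paper states only that the corollary "can be obtained by arguing with $S$ in place of $S'$," and you have carried out precisely that substitution, correctly noting that the reduction steps (5.6)--(5.14) use only $\mathbb{C}[\lambda]$-coefficients and that the single place where Proposition 5.5 needed $S'$ was the final division by $1-\lambda^{\ell_0}$.
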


Relative to the grading defined on $S'$ in Section 3, the elements $x^{{\bf a}_j}-\lambda x^{{\bf a}_0}$ are homogeneous 
of degree~1.  To show that these elements form a regular sequence on $S'$ it suffices to show that if $\{g_j(\lambda,x)\}_{j=1}^m$ are elements of $S'$ such that
\begin{equation}
 \sum_{j=1}^m (x^{{\bf a}_j}-\lambda x^{{\bf a}_0})g_j(\lambda,x) = 0, 
 \end{equation}
then there exists a skew-symmetric set $\{h_{ij}(\lambda,x)\}_{i,j=1}^m$ of elements of $S'$ (i.~e., $h_{ij} = -h_{ji}$) such that
\begin{equation}
 g_j(\lambda,x) = \sum_{i=1}^m (x^{{\bf a}_i}-\lambda x^{{\bf a}_0})h_{ij}(\lambda,x) \quad\text{for $j=1,\dots,m$.} 
 \end{equation}

\begin{proposition}
The elements $\{x^{{\bf a}_j} -\lambda x^{{\bf a}_0}\}_{j=1}^m$ form a regular sequence on $S'$.
\end{proposition}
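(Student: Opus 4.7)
The plan is to deduce Proposition~5.18 from the Cohen-Macaulay property of $S'$, which we obtain by realizing $S'$ as a finite free extension of a polynomial subring, after which a homogeneous system of parameters is automatically a regular sequence.

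Let $T := \mathbb{C}(\lambda)[x^{{\bf a}_1},\dots,x^{{\bf a}_m}] \subseteq S'$. Because ${\bf a}_1,\dots,{\bf a}_m$ are linearly independent, the monomials $(x^{{\bf a}_1})^{c_1}\cdots(x^{{\bf a}_m})^{c_m}$ with $c_j \in \mathbb{Z}_{\geq 0}$ are $\mathbb{C}(\lambda)$-linearly independent, so $T$ is a polynomial ring in $m$ indeterminates, hence regular of Krull dimension $m$. Every $u \in M$ admits a unique decomposition $u = {\bf b} + \sum_j c_j {\bf a}_j$ with ${\bf b}\in{\mathcal B}$ and $c_j \in \mathbb{Z}_{\geq 0}$: writing $u = \sum u'_j {\bf a}_j$ with $u'_j \in \mathbb{Q}_{\geq 0}$ (possible since $u \in C(A)$), take $c_j = \lfloor u'_j\rfloor$ and ${\bf b} = \sum \{u'_j\}{\bf a}_j \in V_{\mathbb{Z}}\cap P(A) = {\mathcal B}$. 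Hence $S'$ is a free $T$-module of rank $d$ with basis $X^{\mathcal B}$. Since a finite free extension of a Cohen-Macaulay Noetherian ring is itself Cohen-Macaulay of the same Krull dimension, $S'$ is Cohen-Macaulay of dimension $m$.

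Equip $S'$ with the grading from Section~3, so $S'_0 = \mathbb{C}(\lambda)$ and each $x^{{\bf a}_j}-\lambda x^{{\bf a}_0}$ is homogeneous of degree~$1$. The invertibility of $\boldsymbol{\beta}$ in~(5.3) identifies the ideals $\sum_{i=1}^n \bigl(x_i \partial f_{\lambda}/\partial x_i\bigr) S'$ and $\sum_{j=1}^m (x^{{\bf a}_j}-\lambda x^{{\bf a}_0}) S'$, so by Proposition~5.6 the quotient $S'\bigl/\sum_{j=1}^m (x^{{\bf a}_j}-\lambda x^{{\bf a}_0})S'$ is a $d$-dimensional $\mathbb{C}(\lambda)$-vector space, hence artinian. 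Thus $\{x^{{\bf a}_j}-\lambda x^{{\bf a}_0}\}_{j=1}^m$ is a homogeneous system of parameters for the graded Cohen-Macaulay ring $S'$ of dimension $m$, and any homogeneous system of parameters in such a ring is automatically a regular sequence. This gives the vanishing of the first Koszul homology, which is precisely the existence of the skew-symmetric $\{h_{ij}\}$ satisfying (5.17).

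The main subtle point is the Cohen-Macaulayness of $S'$, which we derive from the free-$T$-module structure of $S'$ via the unique decomposition of $M$ in terms of ${\mathcal B}$ and the cone generators; the passage from Cohen-Macaulayness plus a homogeneous system of parameters to a regular sequence is a standard result in graded commutative algebra.
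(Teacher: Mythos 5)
Your argument is correct, but it is a genuinely different route from the paper's. The paper proves the proposition by a direct, self-contained double induction: it first reduces to homogeneous $g_j$, writes them as polynomials in $\lambda$ with coefficients in $S_0$, peels off the constant term in $\lambda$ using the regularity of $\{x^{{\bf a}_j}\}_{j=1}^m$ on $S_0$, and then (after reducing to the $\lambda$-free case) repeats the same device to descend in the $x$-degree. No Cohen--Macaulay machinery is invoked, and in particular the paper's proof of this proposition is logically independent of Proposition~5.5 and of Kouchnirenko's dimension count in Corollary~5.4.

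Your proof instead establishes that $S'$ is Cohen--Macaulay by the clean observation that $S' = \bigoplus_{{\bf b}\in{\mathcal B}} T\, x^{\bf b}$ is a graded free module of rank $d$ over the graded polynomial subring $T = {\mathbb C}(\lambda)[x^{{\bf a}_1},\dots,x^{{\bf a}_m}]$ (with $T$ a graded Noether normalization), and then invokes the standard theorem that a homogeneous system of parameters in a graded Cohen--Macaulay ring over a field is a regular sequence. The unique decomposition $u = {\bf b} + \sum_j c_j{\bf a}_j$ underlying the freeness claim is checked correctly. This is slicker and conceptually appealing, and the explicit free $T$-module structure is a useful fact that the paper leaves implicit. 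The trade-off is that you need to know the quotient $S'/\sum_j(x^{{\bf a}_j}-\lambda x^{{\bf a}_0})S'$ is Artinian to have a system of parameters; you obtain this from Proposition~5.5 (which in turn rests on Kouchnirenko's theorem via Corollary~5.4), whereas one could get finite-dimensionality more cheaply from Corollary~5.16. Either way there is no circularity, but the paper's dependency graph for this proposition is leaner.

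Two small slips worth fixing: the citation ``Proposition~5.6'' should be Proposition~5.5 (or Equation~(5.6)), and at the end ``satisfying (5.17)'' should read ``satisfying (5.18)'' --- the skew-symmetric $h_{ij}$ realize Equation~(5.18) for the $g_j$ obeying~(5.17). You should also note that the grading of Section~3 is valued in $q_0^{-1}{\mathbb Z}_{\ge 0}$, so one should rescale by $q_0$ to obtain the ${\mathbb Z}_{\ge 0}$-grading under which the standard graded commutative algebra results are usually stated; this is immediate but worth a word.
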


\begin{proof}
Since $S'$ is a graded ring and the $x^{{\bf a}_j}-\lambda x^{{\bf a}_0}$ are homogeneous, it suffices to prove that (5.18) follows from (5.17) when the $g_j(\lambda,x)$ are homogeneous of the same degree.  After multiplying (5.17) by an element of ${\mathbb C}[\lambda]$ we may assume that the $g_j(\lambda,x)$ lie in $S$.  As such they may be written in the form
\begin{equation}
g_j(\lambda,x)  = \sum_{k=0}^N \lambda^kg_j^{(k)}(x)
\end{equation}
where the $g_j^{(k)}(x)$ are homogeneous elements of the same degree in $S_0$.  Equation~(5.17) then implies that
\[ \sum_{j=1}^m x^{{\bf a}_j}g_j^{(0)}(x) = 0. \]
Since the $x^{{\bf a}_j}$ form a regular sequence in $S_0$, there exist $h_{ij}^{(0)}(x)$ in $S_0$, skew-symmetric, such that 
\begin{equation}
 g_j^{(0)}(x) = \sum_{i=1}^m x^{{\bf a}_i}h^{(0)}_{ij}(x). 
 \end{equation}
Define
\[ \tilde{g}_j(\lambda,x) =g_j(\lambda,x) - \sum_{i=1}^m (x^{{\bf a}_i}-\lambda x^{{\bf a}_0})h^{(0)}_{ij}(x). \]
By (5.17) and skew-symmetry, we have
\[ \sum_{j=1}^m (x^{{\bf a}_j}-\lambda x^{{\bf a}_0})\tilde{g}_j(\lambda,x) = 0. \]
Furthermore, from (5.20) and (5.21) we have
\[ \lambda^{-1}\tilde{g}_j(\lambda,x) =  x^{{\bf a}_0}\sum_{i=1}^m h_{ij}^{(0)}(x) + \sum_{k=0}^{N-1} \lambda^k g_j^{(k+1)}(x). \]
It follows that
\[ \sum_{j=1}^m (x^{{\bf a}_j}-\lambda x^{{\bf a}_0})\big(\lambda^{-1}\tilde{g}_j(\lambda,x)\big) = 0, \]
hence it suffices to prove that (5.17) implies (5.18) when the $g_j(\lambda,x)$ are replaced by the $\lambda^{-1}\tilde{g}_j(\lambda,x)$.  Since the $\lambda^{-1}\tilde{g}_j(\lambda,x)$ are of degree $N-1$ in $\lambda$, we are reduced by induction to the case where $N=0$, i.~e., the $g_j(\lambda,x)$ do not involve $\lambda$.  

So we now suppose that we have elements $g_j(x)$ of $S_0$, homogeneous of the same degree, such that
\begin{equation}
\sum_{j=1}^m (x^{{\bf a}_j}-\lambda x^{{\bf a}_0})g_j(x) = 0 
 \end{equation}
in $S'$.  This implies that
\begin{equation}
 \sum_{j=1}^m x^{{\bf a}_j}g_j(x) = 0
 \end{equation}
 and
 \begin{equation}
 \sum_{j=1}^m g_j(x) = 0.
 \end{equation}
Since the $x^{{\bf a}_j}$ form a regular sequence in $S_0$ there exists a skew-symmetric set $\{h_{ij}(x)\}_{i,j=1}^m$ of elements of $S_0$ with $\deg h_{ij}=\deg g_j-1$ such that
\begin{equation}
 g_j(x) = \sum_{i=1}^m x^{{\bf a}_i}h_{ij}(x). 
 \end{equation}
Define 
\begin{equation}
\tilde{g}_j(x) = g_j(x) - \sum_{i=1}^m (x^{{\bf a}_i} - \lambda x^{{\bf a}_0})h_{ij}(x).
\end{equation}
By (5.22) and skew-symmetry we have
\begin{equation}
\sum_{j=1}^m (x^{{\bf a}_j}-\lambda x^{{\bf a}_0})\tilde{g}_j(x) = 0.
\end{equation}
Furthermore, from (5.25), we have 
\[ \tilde{g}_j(x) = \lambda x^{{\bf a}_0}\sum_{i=1}^m h_{ij}(x), \]
so (5.27) implies
\[ \sum_{j=1}^m (x^{{\bf a}_j}-\lambda x^{{\bf a}_0})\bigg(\sum_{i=1}^m h_{ij}(x)\bigg) = 0. \]
Since the $h_{ij}(x)$ have degree one less than the $g_j(x)$, we are reduced by induction to the case where the $g_j(x)$ have degree less than $1$.  In this case Equation~(5.23) implies that the $g_j(x)$ equal~0 because the $x^{{\bf a}_j}$ form a regular sequence on $S_0$.  
\end{proof}

We now follow the argument of Section 3.  In analogy with (3.2) we define differential operators $\tilde{D}_i$, $i=1,\dots,n$ by the formula
\begin{equation}
\begin{pmatrix} \tilde{D}_{1} \\ \vdots \\ \tilde{D}_{n} \end{pmatrix} = \boldsymbol{\beta}\begin{pmatrix} {D}_{1} \\ \vdots \\ {D}_{n} \end{pmatrix} = \begin{pmatrix} \sum_{j=1}^n \beta_{1j}D_{j} \\ \vdots \\ \sum_{j=1}^n \beta_{nj}D_{j} \end{pmatrix}.
\end{equation}
\begin{lemma}
Let $u\in M$ and write $u =(u_1,\dots,u_n) = \sum_{j=1}^m u'_j{\bf a}_j$.  We have
\[ \tilde{D}_{j}(x^u) = \begin{cases} u'_jx^u + \ell_j x^u(x^{{\bf a}_j}-\lambda x^{{\bf a}_0}) & \text{for $j=1,\dots,m$,} \\
0 & \text{for $j=m+1,\dots,n$.} \end{cases} \]
\end{lemma}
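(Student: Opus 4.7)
The plan is to mimic the proof of Lemma~3.3 essentially verbatim, with the only modification being that the relation (1.1) must be used to rewrite $x_i\partial f_\lambda/\partial x_i$ in a form analogous to (3.4). Specifically, since
\[ x_i\frac{\partial f_\lambda}{\partial x_i} = \sum_{j=1}^m \ell_j a_{ij} x^{{\bf a}_j} - \ell_0 a_{i0}\lambda x^{{\bf a}_0}, \]
I would substitute $\ell_0 a_{i0} = \sum_{j=1}^m \ell_j a_{ij}$ from (1.1) to get
\[ x_i\frac{\partial f_\lambda}{\partial x_i} = \sum_{j=1}^m \ell_j a_{ij}\bigl(x^{{\bf a}_j} - \lambda x^{{\bf a}_0}\bigr). \]
In matrix form this reads
\[ \begin{pmatrix} x_1\partial f_\lambda/\partial x_1 \\ \vdots \\ x_n\partial f_\lambda/\partial x_n \end{pmatrix} = \boldsymbol{\alpha}\begin{pmatrix} \ell_1(x^{{\bf a}_1}-\lambda x^{{\bf a}_0}) \\ \vdots \\ \ell_m(x^{{\bf a}_m}-\lambda x^{{\bf a}_0}) \end{pmatrix}, \]
which is the direct analogue of (3.4).

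Then, applying $\tilde{D}_j = \sum_i \beta_{ji} D_i$ to $x^u$, I compute
\[ \begin{pmatrix} \tilde{D}_1 \\ \vdots \\ \tilde{D}_n \end{pmatrix}(x^u) = \boldsymbol{\beta}\begin{pmatrix} u_1 \\ \vdots \\ u_n \end{pmatrix} x^u + \boldsymbol{\beta}\begin{pmatrix} x_1\partial f_\lambda/\partial x_1 \\ \vdots \\ x_n\partial f_\lambda/\partial x_n \end{pmatrix} x^u. \]
Substituting the above matrix identity and using (3.5), this equals
\[ \boldsymbol{\beta\alpha}\begin{pmatrix} u'_1 \\ \vdots \\ u'_m \end{pmatrix} x^u + \boldsymbol{\beta\alpha}\begin{pmatrix} \ell_1(x^{{\bf a}_1}-\lambda x^{{\bf a}_0}) \\ \vdots \\ \ell_m(x^{{\bf a}_m}-\lambda x^{{\bf a}_0}) \end{pmatrix} x^u. \]
By the shape of $\boldsymbol{\beta\alpha}$ in (3.1), the right-hand side has $u'_j x^u + \ell_j x^u(x^{{\bf a}_j}-\lambda x^{{\bf a}_0})$ in row $j$ for $j=1,\dots,m$ and $0$ in rows $m+1,\dots,n$, which is exactly the claimed formula.

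There is no real obstacle here; the entire argument is a routine adaptation of Lemma~3.3, and the only substantive point is the invocation of (1.1) to absorb the $-\ell_0 a_{i0}\lambda x^{{\bf a}_0}$ terms into the $\lambda x^{{\bf a}_0}$ corrections of each $x^{{\bf a}_j}$. Thus the proof reduces to two displays plus a one-line remark citing (1.1), (3.1), and (3.5).
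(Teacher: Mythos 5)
Your proof is correct and takes essentially the same route as the paper: the paper's proof of this lemma (Lemma~5.29) states the matrix identity for $x_i\partial f_\lambda/\partial x_i$ (its Equation~(5.30)) and then applies the same $\boldsymbol{\beta\alpha}$ computation used in Lemma~3.3; the derivation of (5.30) from (1.1) that you spell out inline appears in the paper in the proof of Proposition~5.1.
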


\begin{proof}
We note that
\begin{equation}
 \begin{pmatrix} x_1\frac{\partial f_\lambda}{\partial x_1} \\ \vdots \\ x_n\frac{\partial f_\lambda}{\partial x_n} \end{pmatrix}  = 
\boldsymbol{\alpha}\begin{pmatrix} \ell_1 (x^{{\bf a}_1}-\lambda x^{{\bf a}_0}) \\ \vdots \\ \ell_m(x^{{\bf a}_m}- \lambda x^{{\bf a}_0}) \end{pmatrix}
\end{equation}
and 
\begin{equation}
\boldsymbol{\alpha}\begin{pmatrix} u'_1 \\ \vdots \\ u'_m \end{pmatrix} = \begin{pmatrix} u_1 \\ \vdots \\ u_n \end{pmatrix}.
\end{equation}
From the definition we have
\begin{align} 
\begin{pmatrix} \tilde{D}_{1} \\ \vdots \\ \tilde{D}_{n} \end{pmatrix}(x^u)  & = \boldsymbol{\beta} \begin{pmatrix} x_1\frac{\partial}{\partial x_1} + x_1 \frac{\partial f_\lambda}{\partial x_1} \\ \vdots \\ x_n\frac{\partial}{\partial x_n} + x_n \frac{\partial f_\lambda}{\partial x_n}\end{pmatrix} (x^u) \\
 &= \boldsymbol{\beta}\begin{pmatrix} u_1 \\ \vdots \\ u_n \end{pmatrix} x^u + \boldsymbol{\beta} \begin{pmatrix}  x_1 \frac{\partial f_\lambda}{\partial x_1} \\ \vdots \\ x_n \frac{\partial f_\lambda}{\partial x_n}\end{pmatrix}x^u \nonumber \\
  &= \boldsymbol{\beta\alpha}\begin{pmatrix} u'_1 \\ \vdots \\ u'_m\end{pmatrix} x^u + \boldsymbol{\beta\alpha}\begin{pmatrix} \ell_1(x^{{\bf a}_1}-\lambda x^{{\bf a}_0}) \\ \vdots \\ \ell_m(x^{{\bf a}_m}-\lambda x^{{\bf a}_0}) \end{pmatrix} x^u  \nonumber \\
  & = \begin{pmatrix} u'_1x^u + \ell_1 x^u(x^{{\bf a}_1}-\lambda x^{{\bf a}_0}) \\ \vdots \\ u'_mx^u + \ell_m x^u(x^{{\bf a}_m}-\lambda x^{{\bf a}_0}) \\ 0 \\ \vdots \\ 0 \end{pmatrix} \nonumber,
 \end{align}
where the third equality follows from (5.30) and (5.31) and the fourth equality follows from (3.1).
\end{proof}

Since $\boldsymbol{\beta}$ is invertible, we get the following result.
\begin{corollary}
We have ${\mathcal W}' = S'/\sum_{j=1}^m \tilde{D}_jS'$.
\end{corollary}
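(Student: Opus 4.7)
The plan is to show that the two submodules $\sum_{i=1}^n D_iS'$ and $\sum_{j=1}^m \tilde{D}_jS'$ of $S'$ coincide; the corollary then follows by passing to quotients. This is exactly the analogue at the level of $S'$ of Corollary 3.7, and I expect the argument to be a direct verification with essentially no obstacle, so I organize it as two short steps.

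\textbf{Step 1: The $n$-term sums agree.} By definition (5.29), $\tilde{D}_i = \sum_{j=1}^n \beta_{ij}D_j$, where the $\beta_{ij}$ are rational (hence $\mathbb{C}(\lambda)$-central) constants. For $\xi \in S'$ we therefore have $\tilde{D}_i(\xi) = \sum_{j=1}^n D_j(\beta_{ij}\xi) \in \sum_{j=1}^n D_jS'$, giving $\sum_{i=1}^n \tilde{D}_iS' \subseteq \sum_{j=1}^n D_jS'$. Since $\boldsymbol{\beta}$ is an invertible $n\times n$ matrix with rational entries, its inverse $\boldsymbol{\beta}^{-1}$ also has rational entries, and the same argument applied to $D_i = \sum_j (\boldsymbol{\beta}^{-1})_{ij}\tilde{D}_j$ yields the reverse inclusion. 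Hence $\sum_{i=1}^n D_iS' = \sum_{j=1}^n \tilde{D}_jS'$.

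\textbf{Step 2: The operators $\tilde{D}_j$ for $j>m$ vanish on $S'$.} Lemma 5.32 asserts that $\tilde{D}_j(x^u) = 0$ for every $u\in M$ whenever $j\in\{m+1,\dots,n\}$. Since $S'$ is generated as a $\mathbb{C}(\lambda)$-vector space by the monomials $\{x^u\mid u\in M\}$, and each $\tilde{D}_j$ is $\mathbb{C}(\lambda)$-linear (being a $\mathbb{C}(\lambda)$-linear combination of the differential operators $D_i$), it follows that $\tilde{D}_j \equiv 0$ on $S'$ for such $j$. Consequently $\tilde{D}_jS' = 0$ for $j=m+1,\dots,n$, and so $\sum_{j=1}^n \tilde{D}_jS' = \sum_{j=1}^m \tilde{D}_jS'$.

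Combining the two steps gives $\sum_{i=1}^n D_iS' = \sum_{j=1}^m \tilde{D}_jS'$, and therefore
\[ {\mathcal W}' = S'\Big/\sum_{i=1}^n D_iS' = S'\Big/\sum_{j=1}^m \tilde{D}_jS', \]
as claimed. The only point requiring any care is verifying that the $\beta_{ij}$ are genuine constants so that they pass freely through $D_j$; once this is noted, the statement is a formal consequence of the invertibility of $\boldsymbol{\beta}$ together with Lemma 5.32.
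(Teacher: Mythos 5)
Your proof is correct and takes essentially the same approach as the paper, which disposes of the corollary with the single remark ``Since $\boldsymbol{\beta}$ is invertible, we get the following result''; you have simply made explicit the two inclusions and the vanishing of $\tilde{D}_j$ for $j>m$ that this remark relies on. The only blemishes are citation slips: the defining formula $\tilde{D}_i=\sum_j\beta_{ij}D_j$ is Equation~(5.28), not (5.29), and the vanishing statement is Lemma~5.29, not 5.32.
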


\begin{proposition}
The set $X^{\mathcal B}$ is a basis for ${\mathcal W}'$.
\end{proposition}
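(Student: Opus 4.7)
The plan is to mirror the proof of Proposition~3.9, replacing $S_0$ by $S'$ and $D_{j,0}$ by $\tilde{D}_j$. By Corollary~5.33, ${\mathcal W}'$ is the top cohomology of the Koszul complex ${\rm Kos}^\bullet(S',\{\tilde{D}_j\}_{j=1}^m)$. The grading on $S'$ defined in Section~3 (with $\lambda$ assigned degree~$0$) makes this into a filtered complex: Lemma~5.29 exhibits each $\tilde{D}_j$ as the sum of a degree-preserving ``Euler'' part $u'_j\cdot$ and a part raising the degree by one, namely multiplication by $\ell_j(x^{{\bf a}_j}-\lambda x^{{\bf a}_0})$. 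Consequently the associated graded complex is ${\rm Kos}^\bullet(S',\{\ell_j(x^{{\bf a}_j}-\lambda x^{{\bf a}_0})\}_{j=1}^m)$.

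By Proposition~5.15, the sequence $\{x^{{\bf a}_j}-\lambda x^{{\bf a}_0}\}_{j=1}^m$ is regular on $S'$, so this associated graded Koszul complex is exact in degrees $<m$ and its $H^m$ equals $S'\big/\sum_{j=1}^m (x^{{\bf a}_j}-\lambda x^{{\bf a}_0})S'$. Since the filtration is bounded below (degrees of elements of $M$ are non-negative) and exhaustive, the standard spectral-sequence argument yields the corresponding vanishing for $H^i({\rm Kos}^\bullet(S',\{\tilde{D}_j\}))$ with $i<m$ and identifies $H^m$ with $S'\big/\sum_{j=1}^m (x^{{\bf a}_j}-\lambda x^{{\bf a}_0})S'$. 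Moreover, the identification respects the filtration and sends the class of $x^{\bf b}$ to the class of $x^{\bf b}$ on associated graded pieces. Invoking Proposition~5.6, which asserts that $X^{\mathcal B}$ is a basis of the latter quotient, then yields the result.

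The main point requiring care is that the filtered identification really does transfer the basis statement. For spanning, given $\xi\in S'$ one applies Proposition~5.6 to write $\xi \equiv \sum c_{\bf b} x^{\bf b} + \sum_j (x^{{\bf a}_j}-\lambda x^{{\bf a}_0})\eta_j$ with the $\eta_j$ of one lower degree, and then uses the rewriting
\[ (x^{{\bf a}_j}-\lambda x^{{\bf a}_0})x^u = \tfrac{1}{\ell_j}\tilde{D}_j(x^u) - \tfrac{u'_j}{\ell_j}x^u \equiv -\tfrac{u'_j}{\ell_j}x^u \pmod{\textstyle\sum_k \tilde{D}_k S'} \]
from Lemma~5.29 to reduce the residual term to strictly lower degree; induction on degree (which terminates since degrees are non-negative) then reduces $\xi$ to a ${\mathbb C}(\lambda)$-linear combination of elements of $X^{\mathcal B}$ modulo $\sum_k \tilde{D}_k S'$. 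For linear independence, one takes the top-degree component of a vanishing relation $\sum c_{\bf b} x^{\bf b} = \sum_j \tilde{D}_j\eta_j$, observes (using the decomposition of $\tilde{D}_j$ above) that this component lies in $\sum_{j}(x^{{\bf a}_j}-\lambda x^{{\bf a}_0})S'$, and applies Proposition~5.6 to conclude that each top-degree $c_{\bf b}$ vanishes, contradicting the choice of top degree.
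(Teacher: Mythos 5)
Your proof follows the same route as the paper: filter the Koszul complex ${\rm Kos}^\bullet(S',\{\tilde{D}_j\})$ by the grading, identify the associated graded complex with ${\rm Kos}^\bullet(S',\{\ell_j(x^{{\bf a}_j}-\lambda x^{{\bf a}_0})\})$, use the regular-sequence result (Proposition~5.19) to get acyclicity below top degree, and conclude via Proposition~5.5 and Equation~(5.6). The paper dispatches the final transfer of the basis through the filtered isomorphism in one sentence, whereas you supply the explicit degree-descent rewriting; that extra detail is a sound elaboration of the same argument, not a different approach.
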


\begin{proof}
Consider the Koszul complex ${\rm Kos}^\bullet(S',\{\tilde{D}_{j}\}_{j=1}^m)$ defined by the $\tilde{D}_{j}$.  The grading defined in Section~3 makes it into a filtered complex for which (by Corollary~5.33)
\[ H^m({\rm Kos}^\bullet(S',\{\tilde{D}_{j}\}_{j=1}^m)) = {\mathcal W}'. \]
 The associated graded complex is ${\rm Kos}^\bullet(S',\{\ell_j(x^{{\bf a}_j}-\lambda x^{{\bf a}_0}\}_{j=1}^m)$.  By Proposition~5.19 the set $\{x^{{\bf a}_j}-\lambda x^{{\bf a}_0}\}_{j=1}^m$ is a regular sequence on $S'$, so 
 \[ H^i({\rm Kos}^\bullet(S',\{\ell_j(x^{{\bf a}_j}-\lambda x^{{\bf a}_0}\}_{j=1}^m)) = 0\quad\text{for $0\leq i<m$.} \]
   It follows that 
\[ H^i({\rm Kos}^\bullet(S',\{\tilde{D}_{j}\}_{j=1}^m)) = H^i({\rm Kos}^\bullet(S',\{\ell_j(x^{{\bf a}_j}-\lambda x^{{\bf a}_0}\}_{j=1}^m)) \]
for all $i$.  In particular we have
\begin{align*}
 {\mathcal W}' &= H^m({\rm Kos}^\bullet(S',\{\tilde{D}_{j}\}_{j=1}^m)) = H^m({\rm Kos}^\bullet(S',\{\ell_j(x^{{\bf a}_j}-\lambda x^{{\bf a}_0}\}_{j=1}^m))\\  
 &= S'/\sum_{j=1}^m (x^{{\bf a}_j}-\lambda x^{{\bf a}_0})S'. 
 \end{align*}
 Proposition 5.5 and Equation (5.6) now imply the proposition.
\end{proof}

\section{Fundamental solution matrix}

Associated to the basis $X^{\mathcal B}$ of ${\mathcal W}'$ is a Picard-Fuchs equation described as follows.  Write ${\mathcal B} = \{{\bf b}_1,\dots,{\bf b}_d\}$.  For $i,j=1,\dots,d$, define $C_{ij}(\lambda)\in{\mathbb C}(\lambda)$ by
\begin{equation}
D_\lambda(x^{{\bf b}_i}) = \sum_{j=1}^d C_{ij}(\lambda)x^{{\bf b}_j} 
\end{equation}
in ${\mathcal W}'$.
Let $Y$ be the column vector with entries $y_1,\dots,y_d$ and let $C(\lambda)$ be the $(d\times d)$-matrix $C(\lambda) = (C_{ij}(\lambda))_{i,j=1}^d$.  The associated Picard-Fuchs equation is
\begin{equation}
\partial_\lambda Y = C(\lambda)Y.
\end{equation}
Let $\xi\in{\rm Hom}_{{\mathbb C}(\lambda)}({\mathcal W}',{\mathbb C}((\lambda)))$.  Then $\xi\in {\rm Hom}_{{\mathcal D}'}({\mathcal W}',{\mathbb C}((\lambda)))$ if and only if the column vector with entries $\xi(x^{{\bf b}_1}),\dots,\xi(x^{{\bf b}_d})$ is a solution of the system (6.2).  Since $X^{\mathcal B}$ is a basis for ${\mathcal W}'$, the map that sends $\xi$ to the column vector with entries $\xi(x^{{\bf b}_1}),\dots,\xi(x^{{\bf b}_d})$ identifies the ${\mathbb C}$-vector space ${\rm Hom}_{{\mathcal D}'}({\mathcal W}',{\mathbb C}((\lambda)))$ with the space of solutions of (6.2) that lie in ${\mathbb C}((\lambda))^d$.  This latter space has dimension less than or equal to $d$, so
\[ \dim_{\mathbb C} {\rm Hom}_{{\mathcal D}'}({\mathcal W}',{\mathbb C}((\lambda)))\leq d. \]
But there is a natural inclusion
\[ {\rm Hom}_{{\mathcal D}}({\mathcal W},{\mathbb C}[[\lambda]])\hookrightarrow {\rm Hom}_{{\mathcal D}'}({\mathcal W}',{\mathbb C}((\lambda))), \]
and $\dim_{\mathbb C} {\rm Hom}_{{\mathcal D}}({\mathcal W},{\mathbb C}[[\lambda]]) = d$ by Corollary~3.18.  It follows that (6.2) has a full set of solutions in ${\mathbb C}[[\lambda]]^d$.  Corollary 3.18 implies that the set $\{\xi_{{\bf b}_j}\}_{j=1}^d$ is a basis for ${\rm Hom}_{{\mathcal D}'}({\mathcal W}',{\mathbb C}((\lambda)))$.  The associated fundamental solution matrix of (6.2) is the $(d\times d)$-matrix whose $(i,j)$-entry is, by (3.19),
\begin{equation} 
\xi_{{\bf b}_j}(x^{{\bf b}_i}) = G^{({\bf b}_j)}_{{\bf b}_i}(\lambda).
\end{equation}

As in Section 4, we have a direct sum decomposition of ${\mathcal W}'$.  For $k=1,\dots,N$, let $S'_k$ be the ${\mathbb C}(\lambda)$-vector space generated by the $x^u$ with $u\in M_k$ and put ${\mathcal W}'_k = S'_k/\sum_{i=1}^n D_iS'_k$.  The  set $X^{{\mathcal B}_k}$ is a basis for ${\mathcal W}'_k$ and the set $\{\xi_{\bf b}\}_{{\bf b}\in{\mathcal B}_k}$, with $\xi_{\bf b}$ given by (4.6), is a basis for ${\rm Hom}_{{\mathcal D}'}({\mathcal W}'_k,{\mathbb C}((\lambda)))$.  
Fix $k$ and write ${\mathcal B}_k = \{{\bf b}_1,\dots,{\bf b}_{\ell_0}\}$.  The associated fundamental solution matrix is the $(\ell_0\times\ell_0)$-matrix whose $(i,j)$-entry is $G^{({\bf b}_j)}_{{\bf b}_i}(\lambda)$.  

As we observed in Section 4, if ${\bf b}_j$ is an interior point of $P(A)$, then $G^{({\bf b}_j)}_{{\bf b}_i}(\lambda)\neq 0$ for all ${\bf b}_i\in{\mathcal B}_k$ i.~e., all entries in column $j$ are nonzero.  On the other hand, by Corollary~4.22, if ${\bf b}_j$ is a boundary point of $P(A)$, then 
\begin{equation}
G^{({\bf b}_j)}_{{\bf b}_i}(\lambda) = \begin{cases} 1 & \text{if $i=j$,} \\ 0 & \text{if $i\neq j$.} \end{cases}
\end{equation}
Thus all entries in column $j$ are zero except for the entry in row $j$, which equals 1.

We now describe the entries in each row.  We fix $i$ and let $j$ vary.  By the above, if ${\bf b}_i$ is an interior point of $P(A)$, then $G_{{\bf b}_i}^{({\bf b}_j)}(\lambda)\neq 0$ if and only if ${\bf b}_j$ is an interior point of $P(A)$.  If ${\bf b}_i$ is a boundary point of $P(A)$, then $G_{{\bf b}_i}^{({\bf b}_j)}(\lambda)\neq 0$ if and only if either ${\bf b}_j$ is an interior point of $P(A)$ or ${\bf b}_j = {\bf b}_i$.  Let $R_k$ be the number of elements of ${\mathcal B}_k$ that are interior to $P(A)$.  Then row $i$ has $R_k$ nonzero entries if ${\bf b}_i$ is an interior point of $P(A)$ and has $R_k+1$ nonzero entries if ${\bf b}_i$ is a boundary point of $P(A)$.  

Since $F^{({\bf b}_j)}_{{\bf b}_i}(\lambda)$ determines 
$G^{({\bf b}_j)}_{{\bf b}_i}(\lambda)$ up to a constant factor, we examine the $F^{({\bf b}_j)}_{{\bf b}_i}(\lambda)$ more closely.  For each~$j$, write
\begin{equation}
 {\bf b}_j = \sum_{k=1}^m v_k^{(j)}{\bf a}_k. 
 \end{equation}
Let $s_0^{(j)}\in\{0,1,\dots,\ell_0-1\}$ be the unique element such that
\[ {\bf b}_i + s_0^{(j)}{\bf a}_0\equiv {\bf b}_j\pmod{{\mathbb Z}A}. \]
Then let $s^{(j)}_k$, $k=1,\dots,m,$ be the unique nonpositive integers such that
\begin{equation}
{\bf b}_i + s_0^{(j)}{\bf a}_0 = {\bf b}_j - \sum_{k=1}^m s_k^{(j)}{\bf a}_k.
\end{equation}
The $s_k^{(j)}$ for $j=0,1,\dots,m$ depend on $i$, but since $i$ is fixed this should not cause any confusion.  Note that for fixed $i$, the correspondence ${\bf b}_j\leftrightarrow s_0^{(j)}$ is one-to-one between ${\mathcal B}_k$ and the set $\{0,1,\dots,\ell_0-1\}$.  From (4.10) we have
\begin{multline}
F_{{\bf b}_i}^{({\bf b}_j)}(\lambda) = \\ 
\lambda^{s_0^{(j)}} \sum_{s=0}^\infty \frac{\displaystyle \prod_{k=1}^m \bigg(\frac{v_k^{(j)}-s_k^{(j)}}{\ell_k}\bigg)_s\bigg(\frac{v_k^{(j)}-s_k^{(j)}+1}{\ell_k}\bigg)_s\cdots\bigg(\frac{v_k^{(j)}-s_k^{(j)}+\ell_k-1}{\ell_k}\bigg)_s}{\displaystyle \bigg(\frac{s_0^{(j)}+1}{\ell_0}\bigg)_s\bigg(\frac{s_0^{(j)}+2}{\ell_0}\bigg)_s\cdots\bigg(\frac{s_0^{(j)}+\ell_0}{\ell_0}\bigg)_s}\lambda^{s\ell_0}.
\end{multline}

We first observe that there is a simple algebraic relation between the formula (6.7) and its special case $j=i$: since $s_k^{(i)}=0$ for $k=0,1,\dots,m$ we have
\begin{equation}
F_{{\bf b}_i}^{({\bf b}_i)}(\lambda) = 
\sum_{s=0}^\infty \frac{\displaystyle \prod_{k=1}^m \bigg(\frac{v_k^{(i)}}{\ell_k}\bigg)_s\bigg(\frac{v_k^{(i)}+1}{\ell_k}\bigg)_s\cdots\bigg(\frac{v_k^{(i)}+\ell_k-1}{\ell_k}\bigg)_s}{\displaystyle \bigg(\frac{1}{\ell_0}\bigg)_s\bigg(\frac{2}{\ell_0}\bigg)_s\cdots\bigg(\frac{\ell_0}{\ell_0}\bigg)_s}\lambda^{s\ell_0}.
\end{equation}

\begin{proposition}
The formula $(6.7)$ for $F_{{\bf b}_i}^{({\bf b}_j)}(\lambda)$ is obtained from the formula $(6.8)$ for $F_{{\bf b}_i}^{({\bf b}_i)}(\lambda)$ by multiplying the series $(6.8)$ by $\lambda^{s_0^{(j)}}$ and adding $s_0^{(j)}/\ell_0$ to each term inside  parenthesis.
\end{proposition}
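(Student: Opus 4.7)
The plan is to reduce the statement to the arithmetic identity
\[
\frac{v_k^{(j)}-s_k^{(j)}+t}{\ell_k} = \frac{v_k^{(i)}+t}{\ell_k} + \frac{s_0^{(j)}}{\ell_0}
\]
for $k=1,\dots,m$ and $t=0,1,\dots,\ell_k-1$, and the corresponding identity for the denominator Pochhammer symbols. Once these are established, comparing (6.7) to (6.8) term by term inside the sums gives the proposition directly, and the prefactor $\lambda^{s_0^{(j)}}$ is already exhibited in (6.7).

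The first step is to unwind the defining relation (6.6), namely
\[
{\bf b}_i + s_0^{(j)}{\bf a}_0 = {\bf b}_j - \sum_{k=1}^m s_k^{(j)}{\bf a}_k,
\]
substitute the expansions ${\bf b}_i=\sum_k v_k^{(i)}{\bf a}_k$ and ${\bf b}_j=\sum_k v_k^{(j)}{\bf a}_k$ from (6.5), and use the basic relation (1.1) in the form $\ell_0{\bf a}_0=\sum_k\ell_k{\bf a}_k$ to replace ${\bf a}_0$. Because the ${\bf a}_k$ are linearly independent, equating coefficients gives
\[
v_k^{(i)} + \frac{s_0^{(j)}\ell_k}{\ell_0} = v_k^{(j)} - s_k^{(j)} \qquad (k=1,\dots,m).
\]
Dividing by $\ell_k$ yields exactly the identity above for the numerator Pochhammer arguments, uniformly in the shift $t$.

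For the denominator, observe that the arguments in (6.8) are $\frac{r}{\ell_0}$ for $r=1,\dots,\ell_0$, while those in (6.7) are $\frac{s_0^{(j)}+r}{\ell_0}$ for $r=1,\dots,\ell_0$; adding $\frac{s_0^{(j)}}{\ell_0}$ to the former produces the latter (no reindexing needed since the product over $r=1,\dots,\ell_0$ is symmetric under this shift). Combining the numerator and denominator identities, the series in (6.7) is obtained from the series in (6.8) by the prescribed shift of every Pochhammer argument by $\frac{s_0^{(j)}}{\ell_0}$, and multiplication by $\lambda^{s_0^{(j)}}$ matches the overall prefactor.

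There is no real obstacle here; the only point requiring care is making sure the shift identity is recorded with the correct sign and that the use of (1.1) is legal, i.e., that one may freely substitute $\ell_0{\bf a}_0=\sum_k \ell_k{\bf a}_k$ before equating coefficients in the basis $\{{\bf a}_1,\dots,{\bf a}_m\}$ of $V$. Both are immediate from the hypotheses of Section 1.
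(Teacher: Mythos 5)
Your proof is correct and follows essentially the same route as the paper: both reduce the statement to the key identity $v_k^{(i)} = v_k^{(j)} - s_k^{(j)} - s_0^{(j)}\ell_k/\ell_0$, obtained by substituting (1.1) into (6.6) and equating coefficients in the linearly independent basis $\{{\bf a}_k\}$. The parenthetical remark about the denominator product being ``symmetric under this shift'' is unnecessary (and not quite right) --- the $r$-th argument of (6.8) shifted by $s_0^{(j)}/\ell_0$ is literally the $r$-th argument of (6.7), so no symmetry claim is needed --- but this does not affect the validity of the argument.
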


\begin{proof}
The only nonobvious part of this proposition is the assertion that
\begin{equation}
\frac{v_k^{(i)}}{\ell_k} + \frac{s_0^{(j)}}{\ell_0} = \frac{v^{(j)}_k - s_k^{(j)}}{\ell_k}
\end{equation}
for $k=1,\dots,m$.  From (6.6) we have
\begin{equation}
 \sum_{k=1}^m (v_k^{(j)}-s_k^{(j)}){\bf a}_k = {\bf b}_i + s_0^{(j)}{\bf a}_0. 
 \end{equation}
 Using (1.1), this equation can be rewritten as
\begin{equation}
 \sum_{k=1}^m \bigg(v_k^{(j)}-s_k^{(j)}-\frac{s_0^{(j)}\ell_k}{\ell_0}\bigg){\bf a}_k = {\bf b}_i. 
\end{equation}
But ${\bf b}_i = \sum_{k=1}^m v_k^{(i)}{\bf a}_k$ and $\{{\bf a}_k\}_{k=1}^m$ is a linearly independent set, so
\[ v_k^{(i)} = v_k^{(j)}-s_k^{(j)}-\frac{s_0^{(j)}\ell_k}{\ell_0} \]
for $k=1,\dots,m$.  This equation is equivalent to (6.10).
\end{proof}

We now determine which terms in the ratio on the right-hand side of (6.8) cancel.
\begin{lemma}
{\bf (a)}  Suppose that either ${\bf b}_i$ is an interior point of $P(A)$ or ${\bf b}_i \neq {\bf b}_j$.  Then $0<v_k^{(j)}-s_k^{(j)}<\ell_k+1$ for $k=1,\dots,m$.  \\
{\bf (b)}  If ${\bf b}_i$ is a boundary point of $P(A)$ and ${\bf b}_i = {\bf b}_j$, then $0\leq v_k^{(j)}-s_k^{(j)}< 1$ for $k=1,\dots,m$.
\end{lemma}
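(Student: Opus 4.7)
The plan is to read off both bounds directly from the identity
\[ v_k^{(j)} - s_k^{(j)} = v_k^{(i)} + \frac{s_0^{(j)}\ell_k}{\ell_0}, \]
which was established (for $k=1,\dots,m$) in the proof of Proposition~6.9 as a consequence of (6.6), (1.1), and the linear independence of $\{{\bf a}_k\}_{k=1}^m$. Everything then reduces to controlling the two summands on the right using the known ranges $v_k^{(i)}\in[0,1)$ and $s_0^{(j)}\in\{0,1,\dots,\ell_0-1\}$.

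For part (a), first I would dispose of the case ${\bf b}_i\neq{\bf b}_j$ by observing that $s_0^{(j)}\neq 0$ in that situation. Indeed, if $s_0^{(j)}=0$ then ${\bf b}_i\equiv{\bf b}_j\pmod{{\mathbb Z}A}$, but since ${\mathcal B}$ is a complete set of coset representatives for ${\mathbb Z}A$ in $V_{\mathbb Z}$, this forces ${\bf b}_i={\bf b}_j$, contradicting our assumption. Hence $s_0^{(j)}\geq 1$, and so
\[ v_k^{(j)} - s_k^{(j)} \;\geq\; \frac{s_0^{(j)}\ell_k}{\ell_0} \;>\; 0. \]
In the other case, ${\bf b}_i$ is interior to $P(A)$, so $v_k^{(i)}>0$ and the same strict lower bound $v_k^{(j)}-s_k^{(j)}>0$ follows even if $s_0^{(j)}=0$. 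For the upper bound in either subcase, $v_k^{(i)}<1$ and $s_0^{(j)}\leq \ell_0-1$ give
\[ v_k^{(j)} - s_k^{(j)} \;<\; 1 + \frac{(\ell_0-1)\ell_k}{\ell_0} \;=\; \ell_k + 1 - \frac{\ell_k}{\ell_0} \;<\; \ell_k+1. \]

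For part (b), the hypothesis ${\bf b}_i={\bf b}_j$ again combined with the coset-representative property of ${\mathcal B}$ forces $s_0^{(j)}=0$; substituting into (6.6) and using linear independence of $\{{\bf a}_k\}$ then gives $s_k^{(j)}=0$ for all $k=1,\dots,m$. Hence $v_k^{(j)}-s_k^{(j)}=v_k^{(j)}=v_k^{(i)}\in[0,1)$, which is exactly the bound claimed.

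The only subtle step is the one asserting that $s_0^{(j)}=0\iff{\bf b}_i={\bf b}_j$; this is a one-line consequence of the definitions but it is the mechanism that actually couples the two parts of the lemma together. Everything else is an elementary estimate on the explicit formula $v_k^{(j)}-s_k^{(j)}=v_k^{(i)}+s_0^{(j)}\ell_k/\ell_0$.
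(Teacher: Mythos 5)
Your proof is correct and follows essentially the same route as the paper's: both arguments rest on the identity $v_k^{(j)}-s_k^{(j)} = v_k^{(i)} + s_0^{(j)}\ell_k/\ell_0$ from the proof of Proposition~6.9. The only cosmetic difference is that the paper phrases the lower bound geometrically (the right-hand side of (6.11) is an interior point of $C(A)$) whereas you read it off arithmetically; you also spell out the justification for $s_0^{(j)}>0$ when ${\bf b}_i\neq{\bf b}_j$ and for $s_k^{(j)}=0$ in part (b), which the paper leaves implicit.
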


\begin{proof}
If ${\bf b}_i$ is an interior point of $C(A)$, then, since $s_0^{(j)}\geq 0$, the right-hand side of (6.11) is an interior point of $C(A)$, so $v_k^{(j)}-s_k^{(j)}>0$ for all $k$.  If ${\bf b}_i\neq {\bf b}_j$, then $s_0^{(j)}>0$ so again the right-hand side of (6.11) is an interior point of $C(A)$.  Since ${\bf b}_i\in P(A)$, Equation (6.12) implies that
\[ v_k^{(j)}-s_k^{(j)}-\frac{s_0^{(j)}\ell_k}{\ell_0}<1 \]
for all $k$, i.~e., 
\[ \frac{v_k^{(j)}-s_k^{(j)}}{\ell_k} - \frac{s_0^{(j)}}{\ell_0}<\frac{1}{\ell_k}. \]
Since $s_0^{(j)}/\ell_0<1$, this implies that $v_k^{(j)}-s_k^{(j)}<\ell_k+1$.  This proves part (a).  

If ${\bf b}_i = {\bf b}_j$, then $s_k^{(j)}=0$ for $k=0,1,\dots,m$, so $v_k^{(j)} - s_k^{(j)} = v_k^{(j)}$.  Part (b) then follows from (6.5).
\end{proof}

\begin{proposition}
{\bf (a)}  Suppose that ${\bf b}_i$ is an interior point of $P(A)$.  There are exactly $\ell_0-R_k$ factors that appear in both numerator and denominator on the right-hand side of $(6.8)$, namely,
\[ \{(1-s^{(j)}_0/\ell_0)_s\mid \text{${\bf b}_j$ is a boundary point of $P(A)$}\}. \]
{\bf (b)}  Suppose that ${\bf b}_i$ is a boundary point of $P(A)$.  There are exactly $\ell_0-R_k-1$ factors that appear in both numerator and denominator on the right-hand side of~$(6.8)$, namely,
\[ \{(1-s^{(j)}_0/\ell_0)_s\mid \text{${\bf b}_j$ is a boundary point of $P(A)$ and ${\bf b}_j\neq {\bf b}_i$}\}. \]
\end{proposition}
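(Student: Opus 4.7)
The plan is to verify directly, for each denominator factor of (6.8), whether it equals some numerator factor, by indexing numerator and denominator in a way that turns the matching condition into a single identity translatable via Proposition~6.9.

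First I would exploit the bijection ${\bf b}_j \leftrightarrow s_0^{(j)}$ between ${\mathcal B}_k$ and $\{0, 1, \ldots, \ell_0-1\}$ to label the $\ell_0$ denominator factors of (6.8) as $\bigl((\ell_0 - s_0^{(j)})/\ell_0\bigr)_s$, one per ${\bf b}_j \in {\mathcal B}_k$ (with $(1)_s$ corresponding to $j=i$). The numerator factors are $\bigl((v_k^{(i)}+t)/\ell_k\bigr)_s$ for $k=1,\ldots,m$ and $t = 0, \ldots, \ell_k - 1$, a total of $\sum_k \ell_k = \ell_0$ factors by (1.2).

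Next I would apply identity (6.10) from Proposition~6.9 to rewrite the matching condition $(v_k^{(i)}+t)/\ell_k = (\ell_0-s_0^{(j)})/\ell_0$ as $v_k^{(j)} - s_k^{(j)} + t = \ell_k$. Since $t$ must be a nonnegative integer less than $\ell_k$ and $v_k^{(j)} \in [0,1)$, this equation has a solution iff $v_k^{(j)} = 0$, in which case $t = \ell_k + s_k^{(j)}$; and whenever $j \neq i$ or ${\bf b}_i$ is interior, Lemma~6.12(a) forces $0 < -s_k^{(j)} \leq \ell_k$, so $t$ automatically lies in the required range. This shows that for $j \neq i$ the denominator factor attached to $j$ cancels iff some $v_k^{(j)} = 0$, i.e., iff ${\bf b}_j$ is a boundary point of $P(A)$.

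Finally I would dispose of the case $j = i$ separately: the factor $(1)_s$ cancels iff $(v_k^{(i)}+t)/\ell_k = 1$ has an integer solution with $t \in \{0,\ldots,\ell_k-1\}$, which is ruled out by $v_k^{(i)} \in [0, 1)$; so $j = i$ never contributes a cancellation, whether ${\bf b}_i$ is interior or boundary. Combining these observations gives part~(a) (all $\ell_0 - R_k$ boundary points of ${\mathcal B}_k$ contribute, and all differ from the interior ${\bf b}_i$) and part~(b) (the boundary ${\bf b}_i$ itself contributes no cancellation, leaving the $\ell_0 - R_k - 1$ other boundary points). The main subtlety is isolating the case $j = i$, since Lemma~6.12(a) fails there precisely when ${\bf b}_i$ is boundary; once that case is treated by hand, everything else reduces to the direct computation via (6.10).
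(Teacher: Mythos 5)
Your proposal is correct and follows essentially the same route as the paper: index the $\ell_0$ denominator factors by ${\bf b}_j\in{\mathcal B}_k$ via the bijection with $s_0^{(j)}$, translate the matching condition via (6.10) to $v_k^{(j)} - s_k^{(j)} + t = \ell_k$, observe that integrality forces $v_k^{(j)}=0$ (i.e., ${\bf b}_j$ a boundary point), and invoke Lemma~6.13(a) to confirm the resulting $t$ lands in $\{0,\ldots,\ell_k-1\}$. The one small departure is in the case $j=i$: the paper disposes of it (when ${\bf b}_i$ is a boundary point) via Lemma~6.13(b), which gives $s_k^{(j)}=0$ and hence $\sigma_k=\ell_k$ out of range, whereas you argue directly that $(v_k^{(i)}+t)/\ell_k<1$ for all admissible $t$, a slightly more self-contained observation that treats the interior and boundary cases of ${\bf b}_i$ uniformly and avoids Lemma~6.13(b) altogether. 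Also note a typo: the reference to Lemma~6.12(a) should be Lemma~6.13(a).
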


\begin{proof}
Since $s^{(j)}_0$ runs through the set $\{0,1,\dots,\ell_0-1\}$ as ${\bf b}_j$ runs through the set~${\mathcal B}_k$, the ratios
$1-s^{(j)}_0/\ell_0$ run through the set $\{1/\ell_0,2/\ell_0,\dots,\ell_0/\ell_0\}$.  We need to determine when
\begin{equation}
 1-\frac{s^{(j)}_0}{\ell_0} = \frac{v^{(i)}_k + \sigma_k}{\ell_k} 
 \end{equation}
for some $k$ and some $\sigma_k\in\{0,1,\dots,\ell_k-1\}$.  Solving (6.10) for $v^{(i)}_k/\ell_k$, substituting into (6.15), and simplifying gives the equation
\begin{equation}
\ell_k - (v_k^{(j)}-s_k^{(j)}) = \sigma_k.
\end{equation}
Since $\ell_k$ and $s_k^{(j)}$ are integers, this equation has a solution $\sigma_k\in{\mathbb Z}$ if only if $v_k^{(j)}$ is an integer.  But $0\leq v_k^{(j)}<1$, so $v_k^{(j)}$ is integral if and only if $v_k^{(j)}=0$, i.~e., if and only if ${\bf b}_j$ is a boundary point of $P(A)$.  In this case we have $\sigma_k = \ell_k+s^{(j)}_k$, so we get cancellation if and only if $s^{(j)}_k\in\{-1,-2,\dots,-\ell_k\}$.  

If ${\bf b}_i$ is an interior point of $P(A)$, it follows from Lemma~6.13(a) that if $v_k^{(j)}=0$ then $s^{(j)}_k\in\{-1,-2,\dots,-\ell_k\}$.  Thus every boundary point ${\bf b}_j$ produces a cancellation.  This proves part (a) of the proposition.

If ${\bf b}_i$ is a boundary point of $P(A)$ and ${\bf b}_j$ is a boundary point different from~${\bf b}_i$, then Lemma~6.13(a) again implies that $s^{(j)}_k\in\{-1,-2,\dots,-\ell_k\}$ so we get a cancellation.  But if ${\bf b}_j={\bf b}_i$, then by Lemma~6.13(b) if $v^{(j)}_k=0$, then $s_k^{(j)} = 0$ so there is no cancellation in that case.  This proves part (b) of the proposiiton.
\end{proof}

Suppose that ${\bf b}_i$ is an interior point of $P(A)$. Applying Proposition 6.14(a) we let $\beta_1,\dots,\beta_{R_k}$ be the elements on the list $1/\ell_0,\dots,\ell_0/\ell_0$ that are not of the form $1-s_0^{(j)}/\ell_0$ for ${\bf b}_j$ a boundary point of $P(A)$.  In particular, since $\ell_0/\ell_0=1$ is not of this form we may take $\beta_1 = 1$.  We let 
$\alpha_1,\dots,\alpha_{R_k}$ be the elements remaining on the list
\[ \frac{v_1^{(i)}}{\ell_1},\frac{v_1^{(i)}+1}{\ell_1},\dots,\frac{v_1^{(i)}+\ell_1-1}{\ell_1},\ldots,\frac{v_m^{(i)}}{\ell_m},
\frac{v_m^{(i)}+1}{\ell_m},\dots,\frac{v_m^{(i)}+\ell_m-1}{\ell_m} \]
after one copy of each element $1-s_0^{(j)}/\ell_0$ with ${\bf b}_j$ a boundary point of $P(A)$ has been removed.
\begin{theorem}
Suppose that ${\bf b}_i$ is an interior point of $P(A)$.  
The $R_k$ series $(6.7)$ with ${\bf b}_j$ an interior point of $P(A)$ come from a full set of solutions at $x=0$ to the hypergeometric operator
\[ (\delta_x+\beta_1-1)\cdots(\delta_x + \beta_{R_k}-1) - x(\delta_x+\alpha_1)\cdots(\delta_x+\alpha_{R_k}) \]
by replacing $x$ by $\lambda^{\ell_0}$.
\end{theorem}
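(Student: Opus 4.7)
The plan is to establish the theorem by explicitly computing each $F^{({\bf b}_j)}_{{\bf b}_i}(\lambda)$ for ${\bf b}_j$ interior to $P(A)$ and identifying it with one of the solutions (1.5) or (1.6) after the substitution $x = \lambda^{\ell_0}$. The key tools already available are the explicit formula (6.8), the cancellation analysis of Proposition~6.14(a), and the shift relation of Proposition~6.9.

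First I would treat the diagonal case ${\bf b}_j = {\bf b}_i$. Applying Proposition~6.14(a) to the series (6.8), exactly $\ell_0 - R_k$ of the factors $(\nu/\ell_0)_s$ in the denominator cancel with matching factors in the numerator, leaving behind $R_k$ denominator factors $(\beta_1)_s, \ldots, (\beta_{R_k})_s$ and $R_k$ numerator factors $(\alpha_1)_s, \ldots, (\alpha_{R_k})_s$. Crucially, $\beta_1 = \ell_0/\ell_0 = 1$ must survive, because cancellation would require a numerator parameter $(v_k^{(i)}+\sigma_k)/\ell_k = 1$, impossible since ${\bf b}_i$ is interior forces $v_k^{(i)} > 0$ and also $v_k^{(i)} + \sigma_k < \ell_k$. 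Therefore $F^{({\bf b}_i)}_{{\bf b}_i}(\lambda) = {}_{R_k}F_{R_k-1}(\alpha_1, \ldots, \alpha_{R_k}; \beta_2, \ldots, \beta_{R_k}; \lambda^{\ell_0})$, which is (1.5) with $x$ replaced by $\lambda^{\ell_0}$.

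Next, for ${\bf b}_j \in {\mathcal B}_k$ interior with ${\bf b}_j \neq {\bf b}_i$, Proposition~6.9 expresses $F^{({\bf b}_j)}_{{\bf b}_i}(\lambda)$ as $\lambda^{s_0^{(j)}}$ times the series obtained from (6.8) by adding $\gamma := s_0^{(j)}/\ell_0$ to every parameter. Since a pair $(\alpha)_s, (\beta)_s$ cancels if and only if $\alpha = \beta$, shifting both sides by the same $\gamma$ preserves the cancellation pattern, so exactly the same $\ell_0 - R_k$ factors drop out. Because ${\bf b}_j$ is interior, Proposition~6.14(a) guarantees that $1 - \gamma = 1 - s_0^{(j)}/\ell_0$ is one of the surviving values $\beta_1, \ldots, \beta_{R_k}$; call it $\beta_{r(j)}$. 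Then $\beta_{r(j)} + \gamma = 1$, so the shifted factor $(\beta_{r(j)} + \gamma)_s = (1)_s$ plays the role of the $s!$ in standard hypergeometric notation, and the series becomes
\[
F^{({\bf b}_j)}_{{\bf b}_i}(\lambda) = \lambda^{\ell_0(1 - \beta_{r(j)})}\,{}_{R_k}F_{R_k-1}\bigg(\begin{matrix} \alpha_1 + 1 - \beta_{r(j)}, \ldots, \alpha_{R_k} + 1 - \beta_{r(j)} \\ \beta_1 + 1 - \beta_{r(j)}, .\hat{.}., \beta_{R_k} + 1 - \beta_{r(j)} \end{matrix}; \lambda^{\ell_0}\bigg),
\]
with the omitted term in position $r(j)$. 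This is precisely (1.6) under $x \mapsto \lambda^{\ell_0}$.

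Finally I would check the bijection. As ${\bf b}_j$ ranges over the $R_k$ interior points of ${\mathcal B}_k$, the integer $s_0^{(j)}$ ranges over the $R_k$ elements of $\{0, 1, \ldots, \ell_0 - 1\}$ not accounted for by boundary points, which means $1 - s_0^{(j)}/\ell_0$ ranges bijectively over $\{\beta_1, \ldots, \beta_{R_k}\}$ by the very definition of the $\beta_r$'s. The case $r(j) = 1$ (i.e.\ $\beta_{r(j)} = 1$) corresponds to $s_0^{(j)} = 0$, hence to ${\bf b}_j = {\bf b}_i$ and solution (1.5); the other $R_k - 1$ interior points give the remaining solutions (1.6). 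Thus the $R_k$ series $F^{({\bf b}_j)}_{{\bf b}_i}(\lambda)$ constitute exactly the $R_k$ solutions at the origin of the hypergeometric operator after substitution $x = \lambda^{\ell_0}$.

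The bookkeeping in step two is where I anticipate the most care is needed: one must verify that shifting \emph{both} numerator and denominator parameters of (6.8) uniformly by $s_0^{(j)}/\ell_0$ agrees with the explicit formula (6.7), and that the resulting shifted, cancelled series, when viewed in Pochhammer form with $(1)_s$ in the denominator, lines up correctly with the factor $x^{1 - \beta_{r(j)}}$ in (1.6). Once this is done, the identification with the solution space of the hypergeometric operator is immediate.
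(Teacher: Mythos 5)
Your proposal is correct and takes essentially the same approach as the paper: the paper's proof of Theorem~6.17 simply cites Propositions~6.9 and~6.14(a) and the discussion of solutions of (1.4) in Section~1, observing that (6.7) is obtained from (6.8) exactly as (1.6) is obtained from (1.5). Your write-up is a careful, step-by-step elaboration of that same chain of reasoning, including the useful sanity checks that $\beta_1=1$ survives cancellation and that the exponent $\lambda^{s_0^{(j)}}$ agrees with $\lambda^{\ell_0(1-\beta_{r(j)})}$.
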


\begin{proof}
Propositions 6.9 and 6.14(a) imply that the series (6.7) are obtained from the series (6.8) in exactly the way that the series (1.6) are obtained from the series (1.5).  The conclusion of the theorem then follows from the discussion of the solutions of (1.4) in Section~1.
\end{proof}

Suppose that ${\bf b}_i$ is a boundary point of $P(A)$.  Applying Proposition 6.14(b) we let $\beta_1,\dots,\beta_{R_k+1}$ be the elements on the list $1/\ell_0,\dots,\ell_0/\ell_0$ that are not of the form $1-s_0^{(j)}/\ell_0$ for ${\bf b}_j$ a boundary point of $P(A)$ with ${\bf b}_j\neq{\bf b}_i$.  In particular, since $\ell_0/\ell_0=1$ is not of this form we may take $\beta_1 = 1$.  We let 
$\alpha_1,\dots,\alpha_{R_k+1}$ be the elements remaining on the list
\[ \frac{v_1^{(i)}}{\ell_1},\frac{v_1^{(i)}+1}{\ell_1},\dots,\frac{v_1^{(i)}+\ell_1-1}{\ell_1},\ldots,\frac{v_m^{(i)}}{\ell_m},
\frac{v_m^{(i)}+1}{\ell_m},\dots,\frac{v_m^{(i)}+\ell_m-1}{\ell_m} \]
after one copy of each element $1-s_0^{(j)}/\ell_0$ for ${\bf b}_j$ a boundary point of $P(A)$ with ${\bf b}_j\neq {\bf b}_i$ has been removed.  The  proof of the following result is analogous to the proof of Theorem 6.17.  
\begin{theorem}
Suppose that ${\bf b}_i$ is a boundary point of $P(A)$.  
The $R_k+1$ series $(6.7)$ with ${\bf b}_j$ an interior point of $P(A)$ or ${\bf b}_j = {\bf b}_i$ come from a full set of solutions at $x=0$  to the hypergeometric equation
\[ (\delta_x+\beta_1-1)\cdots(\delta_x + \beta_{R_k+1}-1) - x(\delta_x+\alpha_1)\cdots(\delta_x+\alpha_{R_k+1}) \]
by replacing $x$ by $\lambda^{\ell_0}$.
\end{theorem}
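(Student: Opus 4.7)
The plan is to follow the proof of Theorem 6.17 essentially verbatim, substituting Proposition 6.14(b) for Proposition 6.14(a) throughout. First I would start from the diagonal series $F^{({\bf b}_i)}_{{\bf b}_i}(\lambda)$ given by (6.8) and apply Proposition 6.14(b), which identifies exactly $\ell_0 - R_k - 1$ cancellations of the form $(1 - s_0^{(j)}/\ell_0)_s$, indexed by boundary points ${\bf b}_j \in {\mathcal B}_k$ different from ${\bf b}_i$. After cancellation, the surviving numerator parameters are by construction the list $\alpha_1,\dots,\alpha_{R_k+1}$ defined before the statement, and the surviving denominator parameters are $\beta_1,\dots,\beta_{R_k+1}$, with $\beta_1 = 1$ coming from the factor $(\ell_0/\ell_0)_s = (1)_s = s!$. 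Hence the reduced series is, up to a constant factor, the classical hypergeometric series (1.5) with $k = R_k + 1$, evaluated at $x = \lambda^{\ell_0}$; this recovers the solution indexed by ${\bf b}_j = {\bf b}_i$.

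Second, I would use Proposition 6.9 to relate the remaining off-diagonal series $F^{({\bf b}_j)}_{{\bf b}_i}(\lambda)$ (with ${\bf b}_j$ interior) to the diagonal one. That proposition says $F^{({\bf b}_j)}_{{\bf b}_i}(\lambda)$ is obtained from $F^{({\bf b}_i)}_{{\bf b}_i}(\lambda)$ by multiplying by $\lambda^{s_0^{(j)}}$ and adding $s_0^{(j)}/\ell_0$ to every Pochhammer parameter. Under the substitution $x = \lambda^{\ell_0}$ this is precisely the recipe (1.6) for producing the secondary solutions of the hypergeometric operator (1.4) from the primary solution (1.5), with $1 - \beta_l$ identified with $s_0^{(j)}/\ell_0$. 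The bijection ${\bf b}_j \leftrightarrow s_0^{(j)} \in \{0, 1, \dots, \ell_0 - 1\}$ sends the $R_k$ interior elements of ${\mathcal B}_k$ to the $R_k$ nontrivial surviving values $1 - \beta_l$ for $l = 2, \dots, R_k + 1$, while ${\bf b}_j = {\bf b}_i$ corresponds to $s_0^{(i)} = 0$ and $\beta_1 = 1$.

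Putting these together, the $R_k + 1$ series in the statement are in one-to-one correspondence with the $R_k + 1$ local solutions at the origin of the hypergeometric operator in the statement, after replacing $x$ by $\lambda^{\ell_0}$, and the proof is complete. The main issue I anticipate is purely bookkeeping: one must verify that the list of $\beta_l$ surviving the cancellation from Proposition 6.14(b) coincides, in the order induced by ${\bf b}_j \leftrightarrow s_0^{(j)}$, with the shifts arising from the retained ${\bf b}_j$ via Proposition 6.9, so that the reshuffled series really do exhaust the full set of $R_k + 1$ local solutions of (1.4). Since both the cancellation pattern and the Pochhammer shift are governed by the same indexing via $s_0^{(j)}$, this compatibility is essentially tautological, and no ideas beyond those used in the proof of Theorem 6.17 are required.
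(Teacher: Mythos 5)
Your proposal is correct and follows essentially the same route the paper does: the paper's proof of Theorem 6.18 is declared analogous to that of Theorem 6.17, which simply combines Propositions 6.9 and 6.14 (here part (b) in place of part (a)) with the Section 1 discussion of the local solutions (1.5)--(1.6) of the operator (1.4). Your bookkeeping observation that the cancellation pattern and the Pochhammer shift are both indexed by $s_0^{(j)}$ is exactly the point that makes the identification with the $R_k+1$ local solutions go through.
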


\section{Example}

We treat an example related to that of G\"{a}hrs \cite[Section 2.3]{G}.  Take ${\bf a}_1 = (5,1,0,0)$, ${\bf a}_2 = (0,4,1,0)$, ${\bf a}_3 = (0,0,8,0)$, ${\bf a}_4 = (0,0,0,2)$, and ${\bf a}_0 = (1,1,1,1)$.  Equation (1.1) becomes
\begin{equation}
10{\bf a}_0 = 2{\bf a}_1 + 2{\bf a}_2 + {\bf a}_3 + 5{\bf a}_4,
\end{equation}
so $\ell_1=\ell_2 = 2$, $\ell_3 = 1$, $\ell_4 = 5$, $\ell_0 = 10$, and
\[ f_\lambda = 2x_1^5x_2 + 2x_2^4x_3 + x_3^8 + 5x_4^2 -10\lambda x_1x_2x_3x_4. \]
We take ${\bf b}_1 = (1,1,1,1) = \frac{1}{5}{\bf a}_1 + \frac{1}{5}{\bf a}_2 + \frac{1}{10}{\bf a}_3 + \frac{1}{2}{\bf a}_4$, an interior point of $P(A)$, so
Equation~(6.8) becomes
\begin{multline}
F_{{\bf b}_1}^{{\bf b}_1}(\lambda)  = \\
  \sum_{s=0}^\infty \frac{\displaystyle \bigg(\frac{1}{10}\bigg)_s\bigg(\frac{6}{10}\bigg)_s\bigg(\frac{1}{10}\bigg)_s\bigg(\frac{6}{10}\bigg)_s\bigg(\frac{1}{10}\bigg)_s\bigg(\frac{1}{10}\bigg)_s\bigg(\frac{3}{10}\bigg)_s\bigg(\frac{5}{10}\bigg)_s\bigg(\frac{7}{10}\bigg)_s\bigg(\frac{9}{10}\bigg)_s}{\displaystyle\bigg(\frac{1}{10}\bigg)_s\bigg(\frac{2}{10}\bigg)_s\bigg(\frac{3}{10}\bigg)_s\bigg(\frac{4}{10}\bigg)_s\bigg(\frac{5}{10}\bigg)_s\bigg(\frac{6}{10}\bigg)_s\bigg(\frac{7}{10}\bigg)_s\bigg(\frac{8}{10}\bigg)_s\bigg(\frac{9}{10}\bigg)_s\bigg(\frac{10}{10}\bigg)_s}\lambda^{10s}\\
 = {}_4F_3(1/10,1/10,1/10,3/5;1/5,2/5,4/5;\lambda^{10}).
\end{multline}
By Theorem 6.17, the four series corresponding to the four interior points of $P(A)$ that are congruent to ${\bf b}_1$ modulo ${\mathbb Z}A_+$ and given by Equation (6.7) are obtained from a full set of solutions to
\begin{equation}
\textstyle \delta_x(\delta_x-\frac{1}{5})(\delta_x -\frac{3}{5})(\delta_x-\frac{4}{5}) - x (\delta_x + \frac{1}{10})^3(\delta_x + \frac{3}{5})
\end{equation}
by replacing $x$ by $\lambda^{10}$.  To find the equation satisfied by (7.2), make the change of variable $x\to \lambda^{10}$ in (7.3):
\begin{equation}
\delta_\lambda(\delta_\lambda - 2)(\delta_\lambda-6)(\delta_\lambda-8) -\lambda^{10} (\delta_\lambda + 1)^3(\delta_\lambda+6),
\end{equation}
where $\delta_\lambda = \lambda\frac{\partial}{\partial\lambda}$.  We show in the next section that (7.4) is the differential equation satisfied by $x^{{\bf b}_1}\in{\mathcal W}'$.  

For any two complex numbers $c_1$ and $c_2$ we have an equality of operators $(\delta_\lambda + c_1)\circ \lambda^{c_2} = \lambda^{c_2} \circ (\delta_\lambda + c_1 + c_2)$.  Taking $c_2=-1$, it follows that the series $\lambda\,{}_4F_3(\frac{1}{10},\frac{1}{10},\frac{1}{10}\frac{3}{5};\frac{1}{5},\frac{2}{5},\frac{4}{5};\lambda^{10})$ satisfies the equation
\begin{equation}
(\delta_\lambda-1)(\delta_\lambda - 3)(\delta_\lambda-7)(\delta_\lambda-9) -\lambda^{10} \delta_\lambda^3(\delta_\lambda+5).
\end{equation}

Equation (7.5) differs from the equation computed by G\"{a}hrs (\cite[unnumbered equation following Equation~(2.4)]{G}) by some constant factors, due to the fact that our normalization of $f_\lambda$ differs from that of G\"{a}hrs.  The shift from our equation (7.4) to the G\"{a}hrs equation (7.5) is owing to the fact that G\"{a}hrs is computing, in the notation of \cite{G}, the differential equation satisfied by $s\Omega_0/f$ rather than the equation satisfied by $\Omega_0/f$.  

To further illustrate our results, we compute the elements of ${\mathcal B}$ that are congruent to ${\bf b}_1$ modulo ${\mathbb Z}A_+$:
\begin{align*}
{\bf b}_2:=(2,2,2,0)&\equiv {\bf b}_1 + {\bf a}_0 & {\bf b}_3:= (3,3,3,1)&\equiv {\bf b}_1+2{\bf a}_0 & {\bf b}_4:=(4,4,4,0)&\equiv {\bf b}_1 + 3{\bf a}_0 \\
{\bf b}_5:=(0,0,4,1)&\equiv{\bf b}_1 + 4{\bf a}_0 & {\bf b}_6:=(1,1,5,0)&\equiv{\bf b}_1 +5{\bf a}_0 & {\bf b}_7:=(2,2,6,1)&\equiv{\bf b}_1 + 6{\bf a}_0 \\
{\bf b}_8:=(3,3,7,0)&\equiv{\bf b}_1 + 7{\bf a}_0 & {\bf b}_9:=(4,4,8,1)&\equiv {\bf b}_1 + 8{\bf a}_0 & {\bf b}_{10}:=(0,0,0,0)&\equiv{\bf b}_1 + 9{\bf a}_0.
\end{align*}
Of these, ${\bf b}_j$ is an interior point of $P(A)$ for $j=1,3,7,9$ (so $R=4$) and is a boundary point for $j=2,4,5,6,8,10$.  By Proposition~6.14(a), the factors in (7.2) that cancel are 
\[ \bigg\{(1-\frac{s_0^{(j)}}{\ell_0})\mid j=2,4,5,6,8,10\bigg\} = \bigg\{\frac{1}{10},\frac{3}{10},\frac{5}{10},\frac{6}{10},\frac{7}{10},\frac{9}{10}\bigg\}. \]

\section{Differential equation satisfied by $x^{\bf b}$}

Let $u\in M_k$ (for some $k$) and let $R_k$ be the number of elements of ${\mathcal B}_k$ that are interior to $P(A)$ and congruent to $u$ modulo ${\mathbb Z}A_+$.  In the next section, we prove the following result.
\begin{proposition}
{\bf (a)}  If $u\in M_k$ is an interior point of $C(A)$, then $x^u\in{\mathcal W}'_k$ satisfies a differential equation of order $R_k$. \\
{\bf (b)}  If $u\in M_k$ is a boundary point of $C(A)$, then $x^u\in{\mathcal W}'_k$ satisfies a differential equation of order $R_k+1$.
\end{proposition}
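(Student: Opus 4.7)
The approach is to exhibit an explicit hypergeometric differential operator $L$ of the stated order that annihilates $x^u$ in $\mathcal{W}'_k$, extending the method of Theorems 6.17 and 6.18 from $u \in \mathcal{B}_k$ to arbitrary $u \in M_k$. The argument has three stages.

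First, I would analyze the $\ell_0$ series $\{F_u^{(\mathbf{b})}(\lambda)\}_{\mathbf{b} \in \mathcal{B}_k}$ given by formula (4.10). Writing $u = \sum_{r=1}^m u'_r \mathbf{a}_r$ with $u'_r \in \mathbb{Q}_{\geq 0}$ and combining (4.8) with (1.1) yields the identity $v_r^{(\mathbf{b})} - s_r^{(\mathbf{b})} = u'_r + s_0^{(\mathbf{b})} \ell_r / \ell_0$, the analog of (6.10). This shows each $F_u^{(\mathbf{b})}(\lambda)$ equals $\lambda^{s_0^{(\mathbf{b})}}$ times a hypergeometric series whose parameters are obtained from those of a single ``base'' series (the one with $s_0 = 0$) by the additive shift $s_0^{(\mathbf{b})}/\ell_0$. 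Since $\mathbf{b} \mapsto s_0^{(\mathbf{b})}$ bijects $\mathcal{B}_k$ onto $\{0, 1, \ldots, \ell_0 - 1\}$, these $\ell_0$ series are precisely the standard solutions (1.5)--(1.6) of a single hypergeometric operator (1.4) with $x = \lambda^{\ell_0}$, having $\beta$'s in $\{1/\ell_0, \ldots, \ell_0/\ell_0\}$ and $\alpha$'s among $\{(u'_r + t)/\ell_r : 1 \leq r \leq m,\ 0 \leq t \leq \ell_r - 1\}$, before cancellation.

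Second, I would determine the order after cancellation by invoking Corollary 4.16. For $u$ interior to $C(A)$, one has $G_u^{(\mathbf{b})} = 0$ for every boundary $\mathbf{b} \in \mathcal{B}_k$ (since $u \notin \sigma_\mathbf{b}^\circ$), producing exactly $\ell_0 - R_k$ numerator-denominator cancellations and leaving a hypergeometric operator $L$ of order $R_k$. For $u$ boundary, lying in the relative interior of some face $\sigma_u$, at most one extra boundary point $\mathbf{b}^* \in \mathcal{B}_k \cap \sigma_u^\circ$ can satisfy $u - \mathbf{b}^* \in \mathbb{Z}_{\geq 0}(A \cap \sigma_u)$ by uniqueness of the coset of $u$ modulo $\mathbb{Z}A$, yielding at least $\ell_0 - R_k - 1$ cancellations and an operator $L$ of order at most $R_k + 1$. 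By construction, this $L$ annihilates every nonzero $G_u^{(\mathbf{b})}(\lambda) = \xi_\mathbf{b}(x^u)$ for $\mathbf{b} \in \mathcal{B}_k$, and trivially annihilates the vanishing ones. Finally, by Proposition 5.34 write $L \cdot x^u = \sum_{\mathbf{b} \in \mathcal{B}_k} c_\mathbf{b}(\lambda)\, x^\mathbf{b}$ with $c_\mathbf{b} \in \mathbb{C}(\lambda)$; applying the basis of solutions $\{\xi_{\mathbf{b}_j}\}$ from Corollary 4.5 gives $\sum_\mathbf{b} c_\mathbf{b}(\lambda) G_\mathbf{b}^{(\mathbf{b}_j)}(\lambda) = L\big(G_u^{(\mathbf{b}_j)}(\lambda)\big) = 0$ for every $j$. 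The fundamental solution matrix $\Phi = \big(G_\mathbf{b}^{(\mathbf{b}_j)}\big)$ is invertible over $\mathbb{C}((\lambda))$ by a Wronskian argument (its $\ell_0$ columns are $\mathbb{C}$-linearly independent solutions of the $\ell_0$-dimensional first-order Picard-Fuchs system, hence also $\mathbb{C}((\lambda))$-independent), forcing $c_\mathbf{b}(\lambda) = 0$ for all $\mathbf{b}$ and thus $L \cdot x^u = 0$ in $\mathcal{W}'_k$.

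The main obstacle is the combinatorial bookkeeping in the second stage, particularly for case (b): adapting the clean cancellation count of Proposition 6.14 from $u \in \mathcal{B}_k$ to arbitrary $u \in M_k$ whose coordinates $u'_r$ may exceed $1$, and verifying that matching the surviving $\alpha$'s with the $\beta$'s produces the geometric count $R_k+1$ (rather than something smaller that would have to be padded to reach the claimed order), require a careful case-by-case analysis of $u$'s position within the face structure of $C(A)$ and of the shifts $(u'_r + t)/\ell_r$ modulo $1/\ell_0$.
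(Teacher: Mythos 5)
Your approach is genuinely different from the paper's. You aim to construct the annihilating hypergeometric operator $L$ explicitly from the series $G_u^{(\mathbf{b})}$ and then deduce $L\cdot x^u = 0$ from the invertibility of the fundamental solution matrix (your third stage is a correct and self-contained device), whereas the paper first proves the dimension bound $\dim_{\mathbb{C}(\lambda)}\widehat{\mathcal{W}}_k = R_k$ (Proposition 9.1) by a Koszul-subcomplex computation and then gets the differential equation for free, since $x^u, D_\lambda x^u,\dots, D_\lambda^{R_k} x^u$ all lie in a space of dimension $R_k$.

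There is, however, a genuine gap in your second stage, and it is not merely a matter of bookkeeping. You identify ``$G_u^{(\mathbf{b})}=0$ for boundary $\mathbf{b}$'' with ``a numerator--denominator cancellation in (4.10)'' and conclude that interior $u$ yields exactly $\ell_0 - R_k$ cancellations and hence an operator of order $R_k$. That identification is the content of Proposition 6.14, but its proof uses $u\in\mathcal{B}_k$ (so $u'_r < 1$) in an essential way. For general $u\in M_k$, $G_u^{(\mathbf{b})}$ can vanish because the constant prefactor $\prod_j[-v_j]_{s_j}$ in (4.13) is zero even though no factor cancels in the Pochhammer ratio: a Pochhammer cancellation requires $-s_j^{(\mathbf{b})}\in\{1,\dots,\ell_j\}$ (cf.\ the proof of Proposition 6.14), whereas $[-v_j]_{s_j^{(\mathbf{b})}}=0$ only requires $v_j=0$ and $s_j^{(\mathbf{b})}<0$. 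Concretely, take $m=2$, $\mathbf{a}_1=(2,0)$, $\mathbf{a}_2=(0,2)$, $\mathbf{a}_0=(1,1)$, so $\ell_1=\ell_2=1$, $\ell_0=2$, $\mathcal{B}_1=\{(0,0),(1,1)\}$, $R_1=1$. For the interior point $u=(4,4)=2\mathbf{a}_1+2\mathbf{a}_2$ and the boundary point $\mathbf{b}=(0,0)$ one finds $s_0^{(\mathbf{b})}=0$ and $s_1^{(\mathbf{b})}=s_2^{(\mathbf{b})}=-2$, so $-s_j^{(\mathbf{b})}=2>\ell_j=1$ and the Pochhammer ratio in $F_u^{(\mathbf{b})}$ has no common factor, even though $G_u^{(\mathbf{b})}=0$. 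For $\mathbf{b}=(1,1)$ one gets $F_u^{(\mathbf{b})}(\lambda)=\lambda\, {}_2F_1(5/2,5/2;3/2;\lambda^2)$, again with no cancellation. Your construction therefore produces an operator of order $2$, whereas Proposition 8.1(a) asserts order $R_1=1$. The additional drop from order $2$ to order $1$ is real (one has ${}_2F_1(5/2,5/2;3/2;z)=(1-z)^{-7/2}(1+\tfrac{2}{3}z)$ by Euler's transformation), but it is invisible to the $\alpha$-$\beta$ matching in the Pochhammer ratio. As written, your argument only proves an annihilating operator of order $\ell_0$ minus the number of genuine coincidences $\alpha_i=\beta_j$, which can exceed $R_k$ (and $R_k+1$ in case (b)). You would need a separate mechanism, beyond cancellation in (4.10), to account for terminating-series reductions; the paper's Koszul dimension count avoids the problem entirely.
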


Let ${\bf b}\in{\mathcal B}_k$ for some $k$ and suppose that ${\bf b}$ is an interior point of $P(A)$.  By Proposition~8.1(a), there is a differential operator
\[ \partial_\lambda^{R_k} + c_{R_k-1}(\lambda)\partial_\lambda^{R_k-1} + \cdots + c_0(\lambda), \]
where the $c_i(\lambda)$ lie in ${\mathbb C}(\lambda)$, that annihilates $x^{\bf b}\in{\mathcal W}'$.  (Recall that $\partial_\lambda$ acts on ${\mathcal W}'_k$ as $D_\lambda$.)  If $\xi\in{\rm Hom}_{\mathcal D}({\mathcal W}_k,{\mathbb C}[[\lambda]])$, then this differential operator will also annihilate the series $\xi(x^{\bf b})$.  By Section~6, as ${{\bf b}'}$ runs through the elements of ${\mathcal B}_k$, the power series $\xi_{{\bf b}'}(x^{\bf b})$ that are nonzero (which occurs exactly when ${\bf b}'$ is an interior point of $P(A)$) run through the nonzero elements of row ${\bf b}$ of the fundamental solution matrix computed in Section~6.  A similar argument applies when ${\bf b}$ is a boundary point of $P(A)$ by using Proposition 8.1(b).  This gives the following result.

\begin{theorem}
For ${\bf b}\in{\mathcal B}_k$, the element $x^{\bf b}\in{\mathcal W}'_k$ satisfies the differential equation whose complete set of solutions is given by the nonzero elements of row ${\bf b}$ of the fundamental solution matrix computed in Section~$6$.
\end{theorem}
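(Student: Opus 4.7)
The plan is to combine Proposition~8.1 with the explicit structure of the solutions at the origin developed in Sections~3--6. First, I would invoke Proposition~8.1 to obtain a monic differential operator
\[ L = \partial_\lambda^r + c_{r-1}(\lambda)\partial_\lambda^{r-1} + \cdots + c_0(\lambda) \in {\mathcal D}', \]
of order $r = R_k$ if ${\bf b}$ is interior to $P(A)$ and $r=R_k+1$ if ${\bf b}$ is a boundary point, such that $L(x^{\bf b}) = 0$ in ${\mathcal W}'_k$. The goal is to show that the nonzero entries of row~${\bf b}$ of the fundamental solution matrix form a complete set of formal-power-series solutions of~$L$.

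Next, I would use that any $\xi \in {\rm Hom}_{\mathcal{D}}({\mathcal W}_k,{\mathbb C}[[\lambda]])$ intertwines $\partial_\lambda$ with its action as $D_\lambda$ on both sides, so $L(\xi(x^{\bf b})) = \xi(L(x^{\bf b})) = 0$. Applying this to the basis $\{\xi_{{\bf b}'}\}_{{\bf b}' \in {\mathcal B}_k}$ from Corollary~4.5 shows that every entry $G^{({\bf b}')}_{\bf b}(\lambda) = \xi_{{\bf b}'}(x^{\bf b})$ of row~${\bf b}$ is a formal-power-series solution of~$L$. By Corollary~4.22 and the row analysis opening Section~6, the nonzero such entries correspond exactly to ${\bf b}'$ interior to $P(A)$ (when ${\bf b}$ is interior) or to ${\bf b}'$ interior to $P(A)$ together with ${\bf b}' = {\bf b}$ (when ${\bf b}$ is a boundary point); in either case the count is~$r$.

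To finish, I would check that these $r$ nonzero row entries are ${\mathbb C}$-linearly independent. This follows from formula~(4.10): each $G^{({\bf b}')}_{\bf b}(\lambda)$ is a nonzero constant times $\lambda^{s_0^{(j)}}$ times a series in $\lambda^{\ell_0}$ with constant term~$1$, and as ${\bf b}'$ ranges over~${\mathcal B}_k$ the exponent $s_0^{(j)}$ takes every value in $\{0,1,\dots,\ell_0-1\}$ exactly once. Hence the nonzero row entries have pairwise distinct leading exponents in $\lambda$, so they are ${\mathbb C}$-linearly independent. Since $L$ has order~$r$, its space of formal-power-series solutions has dimension at most~$r$, so these $r$ independent series must span it, identifying $L$ with the operator of Theorem~6.17 or~6.18 up to the standard normalization. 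The step I expect to require the most care is the linear-independence verification, but it is essentially immediate from the leading-term analysis in Section~6, so I do not anticipate a substantive obstacle.
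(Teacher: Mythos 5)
Your proposal is correct and follows essentially the same route as the paper: invoke Proposition~8.1 to get an annihilating operator $L$ of order $r = R_k$ (or $R_k+1$), observe that solution functionals intertwine $\partial_\lambda$ so that $L$ kills every $\xi_{{\bf b}'}(x^{\bf b})$, and then identify the nonzero row-${\bf b}$ entries with the solutions of the hypergeometric operator from Theorems~6.17/6.18. The one place you go beyond what the paper explicitly writes is the verification that the $r$ nonzero row entries are ${\mathbb C}$-linearly independent via the distinct leading exponents $s_0^{(j)}\in\{0,\dots,\ell_0-1\}$; the paper instead leans on the fact that Theorems~6.17 and~6.18 already present those entries as a full set of solutions of a hypergeometric operator of order~$r$, which carries the independence implicitly. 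Your explicit leading-term argument is correct (the bijection ${\bf b}_j\leftrightarrow s_0^{(j)}$ is noted in Section~6, and the $F$-series has constant term~1), and is a clean way to make the final counting step airtight.
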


\section{Proof of Proposition 8.1}

Proposition 8.1 is a consequence of the following result.  Let $\widehat{\mathcal W}_k\subset{\mathcal W}_k'$ be the ${\mathbb C}(\lambda)$-subspace generated by those $x^u$, $u\in M_k$, for which $u$ is an interior point of~$C(A)$.
\begin{proposition}
$\dim_{{\mathbb C}(\lambda)} \widehat{\mathcal W}_k = R_k$.
\end{proposition}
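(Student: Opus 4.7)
The plan is to squeeze $\dim_{{\mathbb C}(\lambda)}\widehat{\mathcal W}_k$ between $R_k$ from below and $R_k$ from above. For the lower bound, I would simply invoke Proposition~5.34: the set $X^{{\mathcal B}_k}$ is a ${\mathbb C}(\lambda)$-basis for ${\mathcal W}'_k$, so its $R_k$ members coming from the interior points of $P(A)$ are ${\mathbb C}(\lambda)$-linearly independent in ${\mathcal W}'_k$, and they lie in $\widehat{\mathcal W}_k$ by definition.

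For the upper bound, my plan is to realise $\widehat{\mathcal W}_k$ as a subspace of a kernel cut out by the $\ell_0-R_k$ functionals $\xi_{\bf b}$ indexed by the boundary points ${\bf b}\in{\mathcal B}_k$. Under the inclusion ${\rm Hom}_{{\mathcal D}}({\mathcal W},{\mathbb C}[[\lambda]])\hookrightarrow {\rm Hom}_{{\mathcal D}'}({\mathcal W}',{\mathbb C}((\lambda)))$ recalled in Section~6, each such $\xi_{\bf b}$ acts as a ${\mathbb C}(\lambda)$-linear functional on ${\mathcal W}'_k$ via $\xi_{\bf b}(x^u)=G^{({\bf b})}_u(\lambda)$. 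The key vanishing is that whenever ${\bf b}$ is a boundary point of $P(A)$ and $u\in M_k$ lies in the interior of $C(A)$, Corollary~4.16 forces $G^{({\bf b})}_u(\lambda)=0$: the hypothesis $\sigma_{\bf b}\neq C(A)$ holds, so $\xi_{\bf b}$ is nonzero only on points of the relative interior of the proper face $\sigma_{\bf b}$, and this relative interior is contained in the boundary of $C(A)$, hence disjoint from its interior. Consequently $\widehat{\mathcal W}_k$ lies inside the common kernel of these functionals.

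To convert this inclusion into the desired bound $\dim\widehat{\mathcal W}_k\le R_k$, I would check that the evaluation map
\[
{\mathcal W}'_k\longrightarrow {\mathbb C}((\lambda))^{\ell_0-R_k},\qquad \eta\mapsto \bigl(\xi_{\bf b}(\eta)\bigr)_{{\bf b}\in{\mathcal B}_k\text{ boundary}},
\]
is ${\mathbb C}(\lambda)$-surjective. By Corollary~4.22, restricting this map to the subset of $X^{{\mathcal B}_k}$ indexed by boundary points produces the $(\ell_0-R_k)\times(\ell_0-R_k)$ identity matrix, which provides an explicit ${\mathbb C}(\lambda)$-linear splitting. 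Its kernel therefore has codimension exactly $\ell_0-R_k$ in ${\mathcal W}'_k$, i.e., ${\mathbb C}(\lambda)$-dimension $R_k$, and contains $\widehat{\mathcal W}_k$.

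I do not anticipate a serious technical obstacle: the argument is essentially a duality-style bookkeeping exercise combining Corollaries~4.16 and~4.22 with Proposition~5.34. The one point requiring care is that $\xi_{\bf b}$ must be treated as a ${\mathbb C}(\lambda)$-linear, not merely ${\mathbb C}[\lambda]$-linear, functional on ${\mathcal W}'_k$ so that the codimension computation takes place over the correct base field; this is handled by the inclusion into ${\rm Hom}_{{\mathcal D}'}({\mathcal W}',{\mathbb C}((\lambda)))$ established in Section~6.
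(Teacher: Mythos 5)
Your argument is correct in substance and takes a genuinely different route from the paper's. The paper proves Proposition~9.1 via the Koszul complex machinery of Section~9: it builds the nested subcomplexes ${\mathcal K}^\bullet(B,B_1)$, uses the short exact sequences~(9.5) and induction on $\lvert B_1\rvert$ to establish $\dim_{{\mathbb C}(\lambda)}H^m({\mathcal K}^\bullet(A,A)) = \lvert{\mathcal B}\cap C(A)^\circ\rvert$ (an inclusion--exclusion count), and then squeezes $\widehat{\mathcal W}$ between the linearly independent monomials below and a natural surjection $H^m({\mathcal K}^\bullet(A,A))\twoheadrightarrow\widehat{\mathcal W}$ above. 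You instead obtain the upper bound by evaluating the explicit solutions $\xi_{\bf b}$ (for ${\bf b}$ a boundary point of $P(A)$) on $\widehat{\mathcal W}_k$, using Corollary~4.16 to see that they vanish there and Corollary~4.22 to produce $\ell_0-R_k$ independent vectors in the image. This replaces the homological bookkeeping with the analytic data already assembled in Sections~3--6, and is arguably shorter once those formulas are in hand; the paper's Koszul argument, by contrast, is independent of the explicit solutions and would survive even without the formulas for $G^{({\bf b})}_u(\lambda)$. There is no circularity in your approach: Corollaries~4.16, 4.22 and Proposition~5.34 do not depend on Section~8 or~9.

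One point of wording should be repaired. You assert that the evaluation map $\Phi\colon{\mathcal W}'_k\to{\mathbb C}((\lambda))^{\ell_0-R_k}$ is ``${\mathbb C}(\lambda)$-surjective'' and that the identity matrix supplies a ``splitting.'' As a map of ${\mathbb C}(\lambda)$-vector spaces $\Phi$ cannot be surjective onto ${\mathbb C}((\lambda))^{\ell_0-R_k}$, which is infinite-dimensional over ${\mathbb C}(\lambda)$ while ${\mathcal W}'_k$ has dimension $\ell_0$. What Corollary~4.22 actually gives is that the images under $\Phi$ of the $\ell_0-R_k$ boundary basis vectors $x^{\bf b}$ are the standard unit vectors, hence ${\mathbb C}(\lambda)$-linearly independent, so $\dim_{{\mathbb C}(\lambda)}\operatorname{im}\Phi\ge\ell_0-R_k$. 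Rank--nullity then yields $\dim_{{\mathbb C}(\lambda)}\ker\Phi\le R_k$, and since Corollary~4.16 shows $\widehat{\mathcal W}_k\subseteq\ker\Phi$, the desired upper bound follows. With this phrasing corrected, the proof is sound.
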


\begin{proof}[Proof of Proposition $8.1$]
If $u$ is an interior point of $C(A)$, then the $R_k+1$ elements
\[ x^u,D_\lambda(x^u)=-\ell_0x^{{\bf a}_0},\dots,D_\lambda^{R_k}(x^u)= (-\ell_0)^{R_k}x^{u+R_k{\bf a}_0} \]
 are interior points of $C(A)$, so by Proposition 9.1 there is a ${\mathbb C}(\lambda)$-linear relation between them.  This proves part~(a).  

If $u$ is a boundary point of $C(A)$, then $D_\lambda(x^u), D_\lambda^2(x^u), \dots,D_\lambda^{R_k+1}(x^u)$ are interior points of $C(A)$, so by Proposition 9.1 there is a ${\mathbb C}(\lambda)$-linear relation between them.  This proves part~(b).  
\end{proof}

Proposition 9.1 is a consequence of the appendix to \cite{AS0}, which contains a more general result \cite[Theorem A.9]{AS0}.  We give here a simpler version of that result.  

For any subset $B\subseteq A$, let $V_B\subseteq V$ be the subspace generated by $B$, so $\dim_{{\mathbb R}} V_B={\rm card}(B)$.  Let $V_{B,{\mathbb Z}} = V_B\cap{\mathbb Z}$ and let $C(B)$ be the real cone generated by $B$.  The cone $C(B)$ is a face of $C(A)$ of dimension ${\rm card}(B)$.  Put $M_B = V_{B,{\mathbb Z}}\cap C(B)$ and let $S'_B$ be the ${\mathbb C}(\lambda)$-algebra generated by $\{x^u\mid u\in M_B\}$.  Let $\theta_B:S'\to S'_B$ be the ${\mathbb C}(\lambda)$-homomorphism defined by
\[ \theta_B(x^u) = \begin{cases} x^u & \text{if $u\in M_B$,} \\ 0 & \text{if $u\not\in M_B$.}\end{cases} \]
For $j=1,\dots,m$, we define $\tilde{D}^B_j = \theta_B\circ\tilde{D}_j$.  It follows from Lemma 5.29 that for $u\in M_B$, say $u=\sum_{j:{\bf a}_j\in B} u'_j{\bf a}_j$, we have
\begin{equation}
 \tilde{D}_j^B(x^u) = \begin{cases} u'_jx^u + \ell_jx^{u+{\bf a}_j} & \text{if ${\bf a}_j\in B$,} \\ 0 & \text{if ${\bf a}_j\not\in B$.} \end{cases} 
 \end{equation}

Consider ${\rm Kos}^\bullet(S'_B,\{\tilde{D}^B_j\}_{{\bf a}_j\in B})$, the Koszul complex on $S'_B$ defined by the operators~$\{\tilde{D}_j\}_{{\bf a}_j\in B}$.  It follows from (9.2) that the associated graded complex is the Koszul complex on $S_B'$ defined by the $\{\ell_j{\bf a}_j\}_{{\bf a}_j\in B}$.  Since these monomials form a regular sequence on $S'_B$, both these Koszul complexes are acyclic except in top dimension, where we have
\begin{equation}
\dim_{{\mathbb C}(\lambda)}H^{|B|}({\rm Kos}^\bullet(S'_B,\{\tilde{D}^B_j\}_{{\bf a}_j\in B}))= |{\mathcal B}\cap C(B)|.
\end{equation}

We define subcomplexes of these Koszul complexes.  
Let $B_1$ be a subset of $B$ and let $S_{B,B_1}'$ be the $C(\lambda)$-subalgebra of $S_B'$ generated by those $x^u$, $u\in M_B$, for which $u=\sum_{j:{\bf a}_j\in B} u'_j{\bf a}_j$ with $u'_j>0$ when ${\bf a}_j\in B_1$.  

It follows from (9.2) that $\tilde{D}_j$ for ${\bf a}_j\in B$ maps $S_{B,B_1}$ to $S_{B,B_1\cup\{{\bf a}_j\}}$.  This allows us to define a subcomplex of ${\rm Kos}^\bullet(S'_B,\{\tilde{D}^B_j\}_{{\bf a}_j\in B})$.  Define
\[ {\mathcal K}^k(B,B_1) = \bigoplus_{\substack{{\bf a}_{j_1},\dots,{\bf a}_{j_k}\in B\\ j_1<\dots<j_k}}  S'_{B,B_1\cap \{{\bf a}_{j_1}\dots,{\bf a}_{j_k}\}} e_{j_1}\wedge\dots\wedge e_{j_k}. \]
Then ${\mathcal K}^\bullet(B,B_1)$ is a subcomplex of ${\rm Kos}^\bullet(S'_B,\{\tilde{D}^B_j\}_{{\bf a}_j\in B})$.  Note that, in particular, 
\begin{equation}
{\mathcal K}^\bullet(B,\emptyset) = {\rm Kos}^\bullet(S'_B,\{\tilde{D}^B_j\}_{{\bf a}_j\in B}).
\end{equation}

Let ${\bf a}_j\in B_1$.  The map $\theta_{B\setminus\{{\bf a}_j\}}$ maps $S'_B$ onto $S'_{B\setminus\{{\bf a}_j\}}$ and induces a surjective map 
\[ {\mathcal K}^\bullet(B,B_1\setminus\{{\bf a}_j\})\to {\mathcal K}^\bullet(B\setminus\{{\bf a}_j\},B_1\setminus\{{\bf a}_j\})[1] \]
 whose kernel is ${\mathcal K}^\bullet(B,B_1)$.  There is thus a short exact sequence
\begin{equation}
0\to {\mathcal K}^\bullet(B,B_1) \to {\mathcal K}^\bullet(B,B_1\setminus\{{\bf a}_j\})\to {\mathcal K}^\bullet(B\setminus\{{\bf a}_j\},B_1\setminus\{{\bf a}_j\})[1]\to 0.
\end{equation}
From (9.3) and (9.4) we have
\begin{equation}
 H^i({\mathcal K}^\bullet(B,\emptyset)) = 0\quad\text{for $i=0,1,\dots,|B|-1$} 
 \end{equation}
and
\begin{equation}
\dim_{{\mathbb C}(\lambda)}H^{|B|}({\mathcal K}^\bullet(B,\emptyset))= |{\mathcal B}\cap C(B)|.
\end{equation}
Using the long exact sequence of cohomology associated to (9.5) and induction on $|B_1|$ (the case $|B_1|=0$ being (9.6) and (9.7)) gives the following result.
\begin{theorem}
We have 
\begin{equation}
H^i({\mathcal K}^\bullet(A,A)) = 0\quad\text{for $i=0,1,\dots,m-1$}
\end{equation}
and
\begin{equation}
 \dim_{{\mathbb C}(\lambda)}  H^m({\mathcal K}^\bullet(A,A)) = \sum_{B\subseteq A}(-1)^{|A\setminus B|} |{\mathcal B}\cap C(B)|. 
 \end{equation}
\end{theorem}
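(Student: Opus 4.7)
The plan is to prove by induction on $|B_1|$ the strengthened assertion that for every pair $B_1 \subseteq B \subseteq A$ we have $H^i({\mathcal K}^\bullet(B,B_1)) = 0$ for $0 \leq i < |B|$ and
\[ \dim_{{\mathbb C}(\lambda)} H^{|B|}({\mathcal K}^\bullet(B,B_1)) = \sum_{B\setminus B_1 \subseteq B' \subseteq B} (-1)^{|B\setminus B'|} |{\mathcal B}\cap C(B')|. \]
The theorem is then the special case $B = B_1 = A$, for which $B \setminus B_1 = \emptyset$ and the sum runs over all $B' \subseteq A$, yielding (9.8) and (9.9). The base case $|B_1| = 0$ reduces the sum to its unique term $B' = B$, which is exactly (9.6) and (9.7).

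For the inductive step I would fix ${\bf a}_j \in B_1$ and take the long exact sequence in cohomology associated to the short exact sequence (9.5), using the convention $H^i(C^\bullet[1]) = H^{i-1}(C^\bullet)$. By the inductive hypothesis applied to the pairs $(B, B_1 \setminus \{{\bf a}_j\})$ and $(B \setminus \{{\bf a}_j\}, B_1 \setminus \{{\bf a}_j\})$, all cohomology groups appearing outside the top degree vanish, which immediately forces $H^i({\mathcal K}^\bullet(B,B_1)) = 0$ for $i < |B|$ and isolates a short exact sequence
\[ 0 \to H^{|B|}({\mathcal K}^\bullet(B,B_1)) \to H^{|B|}({\mathcal K}^\bullet(B, B_1 \setminus \{{\bf a}_j\})) \to H^{|B|-1}({\mathcal K}^\bullet(B\setminus\{{\bf a}_j\}, B_1 \setminus\{{\bf a}_j\})) \to 0. \]
Taking dimensions and substituting the inductive formula into the two right-hand terms, I would split each resulting summation according to whether ${\bf a}_j$ belongs to $B'$; the identity $(-1)^{|B \setminus \{{\bf a}_j\} \setminus B'|} = -(-1)^{|B \setminus B'|}$ (valid when ${\bf a}_j \notin B'$) flips the sign on the second piece, and the two partial sums reassemble into the expression indexed by $B \setminus B_1 \subseteq B' \subseteq B$, completing the induction.

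The only real obstacle will be keeping the index and sign bookkeeping straight: correctly interpreting the degree shift $[1]$ in (9.5), verifying that the inductive vanishing really does force the long exact sequence to collapse in the claimed way, and executing the case split on whether ${\bf a}_j \in B'$ so that the two inductive dimension formulas telescope cleanly into the one for $(B,B_1)$. Nothing deeper than diagram-chasing and elementary combinatorics appears to be required.
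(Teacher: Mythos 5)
Your proposal is correct and fills in precisely the details that the paper leaves implicit: the paper's proof is just the one-line remark preceding Theorem~9.8 that one uses the long exact sequence associated to (9.5) and induction on $|B_1|$, with base case (9.6)--(9.7). Your strengthened inductive statement, the reduction of the long exact sequence to the short exact sequence in top degree, and the case split on whether ${\bf a}_j\in B'$ are exactly the bookkeeping the authors have in mind, so this is the same argument, spelled out.
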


Note that the right-hand side equals $|{\mathcal B}\cap C(A)^\circ|$, the number of points of ${\mathcal B}$ that lie in the interior of the cone $C(A)$.
\begin{corollary}
We have
\[ \dim_{{\mathbb C}(\lambda)}  H^m({\mathcal K}^\bullet(A,A)) = |{\mathcal B}\cap C(A)^\circ|. \]
\end{corollary}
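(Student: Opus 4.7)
The plan is to identify the right-hand side of (9.9) as an inclusion-exclusion count that, after swapping the order of summation, collapses to the count of interior lattice points in ${\mathcal B}$.

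First I would observe that because $A$ is linearly independent, each ${\bf b}\in V_{\mathbb Z}\cap C(A)$ has a unique expansion ${\bf b}=\sum_{j=1}^m v_j{\bf a}_j$ with $v_j\in{\mathbb R}_{\geq 0}$, and ${\bf b}\in C(B)$ if and only if $v_j=0$ for every ${\bf a}_j\notin B$. Define
\[ B_{\bf b} = \{{\bf a}_j\in A : v_j>0\}, \]
so that $B_{\bf b}$ is the set of generators of the smallest face of $C(A)$ containing ${\bf b}$ (the set $\sigma_{\bf b}\cap A$ in the notation used earlier in the paper). Note also that ${\mathcal B}\subseteq P(A)\subseteq C(A)$, so every element of ${\mathcal B}$ is of this form.

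Next I would swap the order of summation in (9.9):
\[ \sum_{B\subseteq A}(-1)^{|A\setminus B|}\,|{\mathcal B}\cap C(B)| = \sum_{{\bf b}\in{\mathcal B}}\;\sum_{B_{\bf b}\subseteq B\subseteq A}(-1)^{|A\setminus B|}. \]
For a fixed ${\bf b}$, write $k=|A\setminus B_{\bf b}|$; then the inner sum equals
\[ \sum_{j=0}^{k}\binom{k}{j}(-1)^j = (1-1)^k, \]
which is $0$ if $k>0$ and $1$ if $k=0$. Hence only those ${\bf b}$ with $B_{\bf b}=A$ contribute, and each such ${\bf b}$ contributes $1$. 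By the characterization of $B_{\bf b}$, the condition $B_{\bf b}=A$ is equivalent to $v_j>0$ for all $j$, i.e., to ${\bf b}$ lying in the relative interior of $C(A)$.

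This yields precisely $\sum_{B\subseteq A}(-1)^{|A\setminus B|}|{\mathcal B}\cap C(B)|=|{\mathcal B}\cap C(A)^\circ|$, and combined with Theorem 9.8 this gives the corollary. There is no real obstacle here: the argument is pure inclusion-exclusion, and the only point that needs a line of justification is the equivalence ``${\bf b}\in C(B)\iff B_{\bf b}\subseteq B$'', which follows immediately from the uniqueness of the expansion coming from linear independence of $A$.
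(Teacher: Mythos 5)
Your argument is correct and is exactly the standard inclusion--exclusion (Möbius inversion on the face lattice of the simplicial cone $C(A)$) that the paper leaves implicit in the remark preceding the corollary. The paper simply asserts that the alternating sum in Theorem~9.8 counts interior points, and your proposal supplies the routine verification of that assertion.
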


\begin{proof}[Proof of Proposition $9.1$]
Since the elements of ${\mathcal B}\cap C(A)^\circ$ are linearly independent in $\widehat{\mathcal W}$, we have
\[ \dim_{{\mathbb C}(\lambda)} \widehat{\mathcal W}\geq |{\mathcal B}\cap C(A)^\circ|. \]
On the other hand, there is a natural surjection
\[ H^m({\mathcal K}^\bullet(A,A)) = S_{A,A}'/\sum_{j=1}^m \tilde{D}_jS'_{A,A\setminus\{{\bf a}_j\}}\to \widehat{W} = S'_{A,A}/\big(S'_{A,A}\cap \sum_{j=1}^m \tilde{D}_j S_{A,\emptyset}\big), \]
which implies that 
\[ \dim_{{\mathbb C}(\lambda)} \widehat{\mathcal W}\leq |{\mathcal B}\cap C(A)^\circ|. \]
Proposition 9.1 now follows from the direct sum decomposition discussed in Section~4.
\end{proof}

\section{Solutions at infinity}

We begin with some general remarks.  Consider a differential operator
\begin{equation}
(\delta_x+\beta_1-1)\cdots(\delta_x+\beta_n-1) - x(\delta_x + \alpha_1)\cdots(\delta_x + \alpha_n).
\end{equation}
Let $F(x)$ be a solution of this operator and let $\ell$ be a positive integer.
\begin{lemma}
The function $F(\lambda^\ell)$ satisfies the differential operator 
\begin{equation}
(\delta_\lambda +\ell\beta_1 -\ell)\cdots(\delta_\lambda + \ell\beta_n-\ell) - \lambda^\ell(\delta_\lambda + \ell\alpha_1)\cdots (\delta_\lambda+\ell\alpha_n).
\end{equation}
\end{lemma}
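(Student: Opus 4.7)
The plan is to reduce everything to the single chain-rule identity
\[ \delta_\lambda\bigl(F(\lambda^\ell)\bigr) = \ell\,(\delta_x F)(\lambda^\ell), \]
which follows immediately from $\frac{d}{d\lambda}F(\lambda^\ell) = \ell\lambda^{\ell-1}F'(\lambda^\ell)$ and $\lambda^\ell = x$. Iterating this identity gives the operator-level statement: for any constant $c$,
\[ (\delta_\lambda + c)\bigl(F(\lambda^\ell)\bigr) = \ell\Bigl(\delta_x + \tfrac{c}{\ell}\Bigr)F(x)\Big|_{x=\lambda^\ell}, \]
and more generally a composition $(\delta_\lambda + c_1)\cdots(\delta_\lambda+c_n)$ applied to $F(\lambda^\ell)$ equals $\ell^n(\delta_x + c_1/\ell)\cdots(\delta_x + c_n/\ell)F(x)\bigl|_{x=\lambda^\ell}$.

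First I would verify the single-factor identity directly from the chain rule, then extend it to products by induction on the number of factors, noting that $(\delta_\lambda + c)$ commutes past itself so the ordering does not matter. Substituting $c = \ell\beta_k - \ell$ for the first product and $c = \ell\alpha_k$ for the second converts
\[ (\delta_\lambda + \ell\beta_k - \ell) \leftrightarrow \ell(\delta_x + \beta_k - 1),\qquad (\delta_\lambda + \ell\alpha_k) \leftrightarrow \ell(\delta_x + \alpha_k). \]

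Applying the operator (10.3) to $F(\lambda^\ell)$ then yields
\[ \ell^n\bigl[(\delta_x+\beta_1-1)\cdots(\delta_x+\beta_n-1) - x(\delta_x+\alpha_1)\cdots(\delta_x+\alpha_n)\bigr]F(x)\Big|_{x=\lambda^\ell}, \]
where the factor $\lambda^\ell$ in front of the second product has been re-identified with $x$. By hypothesis the bracketed operator annihilates $F(x)$, so the result is zero.

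There is essentially no obstacle: the only point that requires a moment of care is making sure the substitution $\lambda^\ell = x$ is done \emph{after} all the differentiations have been converted to $\delta_x$'s, so that the middle factor of $x$ in the hypergeometric operator lines up with the factor $\lambda^\ell$ in (10.3). The rest is just bookkeeping of the constant $\ell$ pulled out of each factor, which produces an overall $\ell^n$ that may be discarded.
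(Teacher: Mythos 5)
Your proof is correct and is exactly the paper's approach — the paper's entire proof is the one-liner ``Make the change of variable $x=\lambda^\ell$ in (10.1),'' and your chain-rule computation $\delta_\lambda\bigl(F(\lambda^\ell)\bigr)=\ell\,(\delta_x F)(\lambda^\ell)$, iterated through the product of first-order factors with the $\lambda^\ell\leftrightarrow x$ identification in the middle term, is precisely what that one-liner means when spelled out.
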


\begin{proof}
Make the change of variable $x = \lambda^\ell$ in (10.1).
\end{proof}

We want to describe the solutions of (10.3) at infinity.  Consider the differential operator
\begin{equation}
(\delta_x +\tau_1-1)\cdots(\delta_x + \tau_n-1) -x(\delta_x +\sigma_1)\cdots(\delta_x+\sigma_n).
\end{equation}
Let $G(x)$ be a solution of this differential operator.  
\begin{lemma}
The function $G(\lambda^{-\ell})$ satisfies the differential operator
\begin{equation}
(\delta_\lambda - \ell\sigma_1)\cdots(\delta_\lambda - \ell\sigma_n) -\lambda^\ell(\delta_\lambda - \ell\tau_1+\ell)\cdots(\delta_\lambda - \ell\tau_n+\ell).
\end{equation}
\end{lemma}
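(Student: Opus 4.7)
The plan is to mimic the proof of Lemma 10.2 by carrying out the change of variable $x = \lambda^{-\ell}$ in the operator (10.4). The first step is to compute how the Euler operator transforms. Since $\frac{dx}{d\lambda} = -\ell\lambda^{-\ell-1}$, the chain rule gives
\[ \delta_\lambda = \lambda\frac{d}{d\lambda} = -\ell\lambda^{-\ell}\frac{d}{dx} = -\ell x\frac{d}{dx} = -\ell\delta_x \]
when both sides are evaluated on functions pulled back through $x = \lambda^{-\ell}$. Equivalently, the operator $\delta_x$ acting on $G(x)$ corresponds, after substitution, to $-\frac{1}{\ell}\delta_\lambda$ acting on $G(\lambda^{-\ell})$.

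Next, I would substitute into each linear factor of (10.4). Each factor $\delta_x + \tau_j - 1$ becomes
\[ -\tfrac{1}{\ell}\delta_\lambda + \tau_j - 1 = -\tfrac{1}{\ell}\bigl(\delta_\lambda - \ell\tau_j + \ell\bigr), \]
and each factor $\delta_x + \sigma_j$ becomes $-\tfrac{1}{\ell}(\delta_\lambda - \ell\sigma_j)$. Since every factor in each of the two products of length $n$ contributes the same scalar $-\tfrac{1}{\ell}$, applying the operator (10.4) to $G(\lambda^{-\ell})$ yields
\[ \left(-\tfrac{1}{\ell}\right)^n\biggl[\prod_{j=1}^n(\delta_\lambda - \ell\tau_j + \ell) - \lambda^{-\ell}\prod_{j=1}^n(\delta_\lambda - \ell\sigma_j)\biggr]G(\lambda^{-\ell}) = 0, \]
because $G$ is annihilated by (10.4).

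Finally, I would multiply this identity on the left by the nonzero scalar $(-\ell)^n(-\lambda^\ell)$, which rearranges the expression into
\[ \biggl[\prod_{j=1}^n(\delta_\lambda - \ell\sigma_j) - \lambda^\ell\prod_{j=1}^n(\delta_\lambda - \ell\tau_j + \ell)\biggr]G(\lambda^{-\ell}) = 0, \]
exhibiting (10.5) as the annihilator of $G(\lambda^{-\ell})$. The only point requiring any care is the noncommutativity of $\lambda^\ell$ with $\delta_\lambda$: one must place the factor $\lambda^\ell$ on the \emph{left} of $\prod_{j=1}^n(\delta_\lambda - \ell\tau_j + \ell)$ when clearing the $\lambda^{-\ell}$, which is legitimate here because we are multiplying an equation (not conjugating an operator). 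This bookkeeping is the only mild obstacle; the rest is direct substitution.
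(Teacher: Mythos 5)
Your proof is correct and is exactly the approach taken in the paper, which simply says ``Make the change of variable $x = \lambda^{-\ell}$ in (10.4)''; you have carried out that substitution explicitly and carefully. The bookkeeping about moving the scalar $\lambda^\ell$ to the left is correct and, as you note, is just multiplying an equation by a nonvanishing function, not conjugating an operator.
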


\begin{proof}
Make the change of variable $x=\lambda^{-\ell}$ in (10.4).  
\end{proof}

\begin{corollary}
Let $G(x)$ be a solution of the differential operator
\begin{equation}
(\delta_x -\alpha_1)\cdots(\delta_x -\alpha_n) -x(\delta_x +1-\beta_1)\cdots(\delta_x+1-\beta_n).
\end{equation}
Then $G(\lambda^{-\ell})$ satisfies $(10.3)$.
\end{corollary}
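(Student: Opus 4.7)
The plan is to reduce Corollary~10.7 to Lemma~10.5 by matching the operator (10.6) to the template (10.4). Concretely, (10.4) reads
\[ (\delta_x +\tau_1-1)\cdots(\delta_x + \tau_n-1) -x(\delta_x +\sigma_1)\cdots(\delta_x+\sigma_n), \]
so setting $\tau_i = 1-\alpha_i$ and $\sigma_i = 1-\beta_i$ recovers (10.6) on the nose. Since $G(x)$ is a solution of (10.6), it is also a solution of (10.4) with these particular values of $\tau_i, \sigma_i$.

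Next I would invoke Lemma~10.5, which asserts that $G(\lambda^{-\ell})$ is annihilated by
\[ (\delta_\lambda - \ell\sigma_1)\cdots(\delta_\lambda - \ell\sigma_n) -\lambda^\ell(\delta_\lambda - \ell\tau_1+\ell)\cdots(\delta_\lambda - \ell\tau_n+\ell). \]
Substituting $\sigma_i = 1-\beta_i$ gives $-\ell\sigma_i = \ell\beta_i - \ell$, and substituting $\tau_i = 1-\alpha_i$ gives $-\ell\tau_i + \ell = \ell\alpha_i$. The operator therefore becomes
\[ (\delta_\lambda + \ell\beta_1 - \ell)\cdots(\delta_\lambda + \ell\beta_n - \ell) - \lambda^\ell(\delta_\lambda + \ell\alpha_1)\cdots(\delta_\lambda + \ell\alpha_n), \]
which is exactly~(10.3). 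This completes the argument.

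There is no real obstacle here: the statement is a bookkeeping corollary whose only content is the identification of the free parameters $(\tau_i, \sigma_i)$ in Lemma~10.5 with $(1-\alpha_i, 1-\beta_i)$, together with a short simplification of the resulting constants. The substantive calculation (the change of variable $x = \lambda^{-\ell}$ and the verification that $\delta_x$ transforms to $-\delta_\lambda/\ell$) has already been absorbed into Lemma~10.5, so no further work is needed beyond the parameter match.
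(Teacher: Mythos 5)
Your proof is correct and is essentially the same as the paper's: both make the substitution $\tau_i = 1-\alpha_i$, $\sigma_i = 1-\beta_i$ in Lemma~10.5 and verify that the resulting operator is (10.3). You simply spell out the constant simplifications that the paper leaves implicit.
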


\begin{proof}
Taking $\sigma_i = 1-\beta_i$ and $\tau_i = 1-\alpha_i$ in (10.4) transforms (10.6) into (10.3).
\end{proof}

To make the solutions of (10.8) slightly simpler to express, we make one further change.  
Assume that the $\alpha_i$ are real numbers and that, say, $\alpha_1\leq \alpha_i$ for all $i$.  Consider the differential operator obtained from (10.8) by adding $\alpha_1$ to each term:
\begin{equation}
\delta_x(\delta_x-\alpha_2+\alpha_1)\cdots(\delta_x -\alpha_n+\alpha_1) -x(\delta_x +1-\beta_1+\alpha_1)\cdots(\delta_x+1-\beta_n+\alpha_1).
\end{equation}
One checks that (10.9) is the operator obtained from (10.8) by multiplying (10.8) on the right by $x^{\alpha_1}$ and on the left by $x^{-\alpha_1}$.  This implies that if $H(x)$ is a solution of the operator (10.9), then $x^{\alpha_1}H(x)$ is a solution of the operator (10.8).  Corollary~10.7 then implies the following result.
\begin{corollary}
Let $H(x)$ be a solution of the operator $(10.9)$.  Then $\lambda^{-\ell\alpha_1}H(\lambda^{-\ell})$ satisfies $(10.3)$.
\end{corollary}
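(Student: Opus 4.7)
The plan is to chain together two ingredients that have already been set up in the excerpt. First, I would verify the conjugation identity asserted in the paragraph immediately preceding the statement: that the differential operator (10.9) equals $x^{-\alpha_1}\circ L_{(10.8)}\circ x^{\alpha_1}$, where $L_{(10.8)}$ denotes the operator in (10.8). This rests on the elementary commutation relation $(\delta_x+c)\circ x^{\alpha_1} = x^{\alpha_1}\circ(\delta_x+c+\alpha_1)$, for any constant $c$, which is the same identity used in Section~7 of the paper (with $c_1=c$, $c_2=\alpha_1$). Applying this relation factor by factor, the conjugation shifts each $(\delta_x-\alpha_i)$ to $(\delta_x-\alpha_i+\alpha_1)$ and each $(\delta_x+1-\beta_i)$ to $(\delta_x+1-\beta_i+\alpha_1)$, while the factor $x$ in the second summand passes through unchanged. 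The first factor $(\delta_x-\alpha_1+\alpha_1)=\delta_x$ matches the leading factor of (10.9), so the result is precisely $L_{(10.9)}$.

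Given this identity, if $H(x)$ satisfies (10.9), i.e.\ $L_{(10.9)}H=0$, then
\[ L_{(10.8)}\bigl(x^{\alpha_1}H(x)\bigr) = \bigl(x^{\alpha_1}\circ L_{(10.9)}\circ x^{-\alpha_1}\bigr)\bigl(x^{\alpha_1}H(x)\bigr) = x^{\alpha_1}\bigl(L_{(10.9)}H\bigr)(x) = 0, \]
so $G(x):=x^{\alpha_1}H(x)$ is a solution of (10.8).

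Second, I would apply Corollary~10.7 to this particular $G$: the corollary asserts that any solution $G(x)$ of (10.8) yields a solution $G(\lambda^{-\ell})$ of~(10.3). Substituting $G(x)=x^{\alpha_1}H(x)$ gives $G(\lambda^{-\ell}) = (\lambda^{-\ell})^{\alpha_1}H(\lambda^{-\ell}) = \lambda^{-\ell\alpha_1}H(\lambda^{-\ell})$, which therefore satisfies (10.3), completing the proof.

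There is essentially no serious obstacle: the statement is a direct concatenation of a conjugation identity (already announced just before the corollary) with Corollary~10.7. The only point requiring care is bookkeeping of signs when composing the exponent change $x\mapsto\lambda^{-\ell}$ with the multiplier $x^{\alpha_1}$, which is what produces the exponent $-\ell\alpha_1$ rather than $+\ell\alpha_1$ in the final expression.
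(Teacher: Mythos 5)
Your proposal is correct and follows the same route the paper intends: the paragraph preceding the corollary establishes that (10.9) is the conjugate $x^{-\alpha_1}\circ L_{(10.8)}\circ x^{\alpha_1}$, so that $x^{\alpha_1}H(x)$ solves (10.8), and Corollary~10.7 then finishes the argument by the substitution $x\mapsto\lambda^{-\ell}$. The sign bookkeeping producing $\lambda^{-\ell\alpha_1}$ is handled correctly.
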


{\bf Example.}  We illustrate these observations with the example from Section 7.  If we take $n=4$ and $\beta_1 = 1$, $\beta_2 = 4/5$, $\beta_3 = 2/5$, $\beta_4=1/5$, $\alpha_1 = \alpha_2=\alpha_3 = 1/10$, and $\alpha_4 = 3/5$ in (10.1), then (10.1) becomes (7.3).  If we then take $\ell=10$ in (10.3), then (10.3) becomes (7.4).  Equation (10.9) then becomes
\begin{equation}
\textstyle \delta_x^3(\delta_x-\frac{1}{2}) - x(\delta_x + \frac{1}{10})(\delta_x + \frac{3}{10}) (\delta_x + \frac{7}{10})(\delta_x + \frac{9}{10}).
\end{equation}
Corollary 10.10 implies that we can get a full set of solutions to (7.4) at infinity by taking a full set of solutions to (10.11) at the origin, replacing the variable by $\lambda^{-10}$, and multiplying by $\lambda^{-1}$.  For example, one solution to (10.11) is
${}_4F_3(\frac{1}{10},\frac{3}{10},\frac{7}{10},\frac{9}{10};\frac{1}{2},1,1;x)$, so one solution to (7.4) at infinity is
\[ \textstyle \lambda^{-1}{}_4F_3(\frac{1}{10},\frac{3}{10},\frac{7}{10},\frac{9}{10};\frac{1}{2},1,1;\lambda^{-10}). \]

\section{Connection with de Rham cohomology}

We describe here an example where we apply the ideas of this paper to compute a fundamental solution matrix for the Picard-Fuchs equation of a projective variety.  Consider the projective hypersurface $X\subseteq{\mathbb P}^{n-1}_{{\mathbb C}(\lambda)}$ defined by the equation
\[ f_\lambda = \sum_{j=1}^n w_jx_j^d - d\lambda x_1^{w_1}\cdots x_n^{w_n} = 0, \]
where $w_j\geq 1$ for $j=1,\dots,n$ and $\sum_{j=1}^n w_j = d$ (see Katz \cite{K2} for an $\ell$-adic treatment of this hypersurface).  

Let $\widetilde{\mathcal W}\subseteq{\mathcal W}'$ be the subspace generated by those $x^u$, $u=(u_1,\dots,u_n)\in M$, for which
$u_j\geq 1$ for $j=1,\dots,n$ and $\sum_{j=1}^n u_j$ is a multiple of $d$.  Note that $M$ in this case is the set of all lattice points in the first orthant.  By Katz \cite[Corollary~1.15]{K}, there is an isomorphism of ${\mathcal D}'$-modules $\widetilde{W}\cong H^{n-1}_{\rm dR}({\mathbb P}^{n-1}_{{\mathbb C}(\lambda)}\setminus X)$.  Specifically, the element $x^u\in\widetilde{\mathcal W}$ with $\sum_{j=1}^n u_j = kd$ corresponds (up to a scalar multiple) to the $(n-1)$-form
\[ \frac{x^u}{f_\lambda^k}\sum_{j=1}^n (-1)^j \frac{dx_1}{x_1}\cdots\widehat{\frac{dx_j}{x_j}}\cdots\frac{dx_n}{x_n}. \]

One can thus obtain a fundamental solution matrix for the Picard-Fuchs equation of $H^{n-1}_{\rm dR}({\mathbb P}^{n-1}_{{\mathbb C}(\lambda)}\setminus X)$, or, via the residue map, of the primitive part of $H^{n-2}_{\rm dR}(X)$, using the results of this paper.  By Section~9, a basis for $\widetilde{W}$ is 
\[ \widetilde{\mathcal B} = \bigg\{ x^u\mid \text{$u=(u_1,\dots,u_n)$ with $0<u_j<d$ for $j=1,\dots,n$ and $d$ divides $\sum_{j=1}^n u_j$}\bigg\}. \]
The set $\widetilde{\mathcal B}$ decomposes following the decomposition of ${\mathcal W}'$ in Section~4.  One can then write down the solution matrix of the Picard-Fuchs equation using Equations~(6.7) and~(4.9).

{\bf Example.}  Let $X\subseteq{\mathbb P}^2_{{\mathbb C}(\lambda)}$ be the plane curve of genus 10 defined by the equation
\[ f_\lambda = x_1^6+2x_2^6+3x_3^6-6\lambda x_1x_2^2x_3^3=0. \]
Thus ${\bf a}_1 = (6,0,0)$, ${\bf a}_2=(0,6,0)$, ${\bf a}_3=(0,0,6)$, and ${\bf a}_0 = (1,2,3)$.  The relation
\[ 6{\bf a}_0 = {\bf a}_1 + 2{\bf a}_2 + 3{\bf a}_3 \]
gives $\ell_1=1$, $\ell_2 = 2$, $\ell_3=3$, and $\ell_0 = 6$.  The set $\widetilde{\mathcal B}$ has 20 elements.  Under the decomposition of Section 4, it decomposes into 6 subsets:
\[ \widetilde{\mathcal B}_1 = \{ (1,2,3), (5,4,3)\},\quad \widetilde{\mathcal B}_2 = \{(2,1,3),(4,5,3)\}, \]
\[ \widetilde{\mathcal B}_3 = \{(2,2,2), (3,4,5),(5,2,5)\}, \quad \widetilde{\mathcal B}_4 = \{(3,2,1), (4,4,4), (1,4,1)\}, \]
\[ \widetilde{\mathcal B}_5 = \{(1,3,2),(2,5,5),(3,1,2),(4,3,5),(5,5,2)\},\]
\[ \widetilde{\mathcal B}_6 = \{(2,3,1),(3,5,4),(4,1,1),(5,3,4),(1,1,4)\}. \]
The Picard-Fuchs equation thus has a block diagonal form, with two $(2\times 2)$-blocks, two $(3\times 3)$-blocks, and two $(5\times 5)$-blocks.  We compute the solution matrix for one of these blocks, the block associated to $\widetilde{\mathcal B}_1$.  From Equations (6.7) and (4.9) we get
\[ \begin{pmatrix} {}_2F_1(\frac{1}{6},\frac{1}{6};\frac{1}{3};\lambda^6) & -27\lambda^4 {}_2F_1(\frac{5}{6},\frac{5}{6};\frac{5}{3};\lambda^6) \\
\vspace*{.01in}\\
-\frac{1}{2}\lambda^2{}_2F_1(\frac{7}{6},\frac{7}{6};\frac{4}{3};\lambda^6) & {}_2F_1(\frac{5}{6},\frac{5}{6};\frac{2}{3};\lambda^6) \end{pmatrix}. \]
The series in the first row are solutions of the operator
\[ \textstyle\delta_x(\delta_x-\frac{2}{3}) - x(\delta_x+\frac{1}{6})^2 \]
with the variable $x$ replaced by $\lambda^6$.  The series in the second row are solutions of the operator
\[ \textstyle\delta_x(\delta_x-\frac{1}{3}) - x(\delta_x+\frac{5}{6})^2 \]
with the variable $x$ replaced by $\lambda^6$.

\end{document}